\newtheorem{thm}{Theorem}[section]
\newtheorem{prop}[thm]{Proposition}
\newtheorem{lem}[thm]{Lemma}
\newtheorem{cor}[thm]{Corollary}
\theoremstyle{definition}
\newtheorem{defn}[thm]{Definition}
\newtheorem{remk}[thm]{Remark}
\newtheorem{remks}[thm]{Remarks}
\newtheorem{exm}[thm]{Example}
\newtheorem{exms}[thm]{Examples}
\newtheorem{notat}[thm]{Notation}
\numberwithin{equation}{section}
\newcommand{\sC}{{\mathcal C}}
\newcommand{\sR}{{\mathcal R}}
\newcommand{\sV}{{\mathcal V}}
\newcommand{\sZ}{{\mathcal Z}}
\newcommand{\A}{{\mathbb A}}
\newcommand{\G}{{\mathbb G}}
\newcommand{\bL}{{\mathbb L}}
\newcommand{\N}{{\mathbb N}}
\renewcommand{\P}{{\mathbb P}}
\newcommand{\Q}{{\mathbb Q}}
\newcommand{\R}{{\mathbb R}}
\newcommand{\Z}{{\mathbb Z}}
\newcommand{\surj}{\twoheadrightarrow}
\newcommand{\inj}{\hookrightarrow}
\newcommand{\Pic}{{\rm Pic}}
\newcommand{\Hom}{{\rm Hom}}
\newcommand{\0}{\emptyset}
\newcommand{\ds}{{/\kern-3pt/}}
\newcommand{\ov}{\overline}
\newcommand{\wt}{\widetilde}
\newcommand{\tuborg}{\left\{\begin{array}{ll}}
\newcommand{\sluttuborg}{\end{array}\right.}
\begin{document}
\title{Cobordism ring of toric varieties}
\author{Amalendu Krishna, V. Uma}
\address{School of Mathematics, Tata Institute of Fundamental Research,  
Homi Bhabha Road, Colaba, Mumbai, India.} 
\email{amal@math.tifr.res.in}
\address{Department of Mathematics, IIT Madras, Chennai, India}
\email{vuma@iitm.ac.in}

\baselineskip=10pt 
  
\keywords{Algebraic cobordism, toric varieties}        

\subjclass[2010]{Primary 14C25; Secondary 19E15}
\maketitle
\begin{abstract}
We describe the equivariant algebraic cobordism rings of
smooth toric varieties. This equivariant description is used 
to compute the ordinary cobordism ring of such varieties. 
\end{abstract}

\section{Introduction}
Let $k$ be a field of characteristic zero. A scheme (or variety) in this
text will mean a quasi-projective $k$-scheme. Based on the construction of
the motivic algebraic cobordism spectrum $MGL$ by Voevodsky \cite{Voev},
Levine and Morel \cite{LM} invented a geometric version of the algebraic 
cobordism theory some years ago. They showed that the resulting cohomology
theory $\Omega^*(-)$ is the universal oriented cohomology theory on the category
of smooth varieties over $k$. Moreover, it is a universal Borel-Moore
oriented homology theory on the category of all $k$-schemes. It was later
shown by Levine \cite{Levine2} that there is isomorphism of cohomology
theories $\Omega^*(-) \xrightarrow{\cong} MGL^{2*,*}(-)$. In other words,
the algebraic cobordism theory of Levine and Morel computes pieces of the
motivic cobordism theory of Voevodsky. Since the cobordism theory of Levine
and Morel has products, one knows that for a smooth scheme $X$, the
cohomology $\Omega^*(X)$ is a graded ring.  

Since the algebraic version of the cobordism theory was discovered only 
recently, there are not many computations of this cohomology theory
known at this stage. Levine and Morel proved a projective bundle formula
for the algebraic cobordism, from which one can deduce the formula for the
cobordism ring of projective spaces. In a recent work \cite{HK}, Hornbostel 
and Kiritchencko computed the cobordism ring of the complete flag variety
${GL_n}/B$ using the techniques similar to the known Schubert calculus
for the singular cohomology. These results were also obtained by
Calm{\`e}s, Petrov and Zainoulline in \cite{CPZ}. The rational cobordism rings of 
arbitrary flag varieties and flag bundles were recently described by the 
first author in \cite{Krishna3} and \cite{Krishna1}. One of the
principal goals of this paper is to describe the cobordism ring of smooth
toric varieties. 

Our strategy of computing the ordinary cobordism ring of smooth toric varieties
is to use the powerful technique of the equivariant cohomology. Such techniques
have been very effective in computing the ordinary Chow ring of varieties
with group action. We refer the reader to the seminal paper \cite{Brion}  
for many results in this direction. The equivariant algebraic cobordism 
groups for smooth varieties were initially defined by Deshpande \cite{DD}. 
They were subsequently developed into a complete theory of equivariant 
cobordism for all $k$-schemes in \cite{Krishna4}. This theory is based on the 
analogous construction of the equivariant Chow groups by Totaro \cite{Totaro1}
and Edidin-Graham \cite{EG}. All the expected  basic properties
of the equivariant cobordism theory were established in \cite{Krishna4}.
These properties are similar to the analogous properties of the 
ordinary (non-equivariant) cobordism which were established by Levine and Morel 
\cite{LM}.  

In the first main result of this paper, we exploit the work of 
\cite{Krishna4, Krishna3} to compute the equivariant algebraic cobordism ring 
of smooth toric varieties. Our description of the equivariant cobordism ring
is very similar to the one for the equivariant Chow ring of toric varieties
in \cite{Brion}. In order to apply the results of \cite{Krishna4, Krishna3}, 
we first prove a decomposition theorem 
({\sl cf.} Theorem~\ref{thm:decomposition*}) for the equivariant cobordism
ring of a smooth toric variety. Such a decomposition of the equivariant
cobordism ring is obtained by adapting the techniques of Vezzosi and
Vistoli \cite{VV} who invented these techniques in the context of the
equivariant algebraic $K$-theory of smooth varieties with torus action.
We deduce the formula for the ordinary cobordism ring of a smooth toric
variety from the equivariant cobordism ring using \cite[Theorem~3.4]{Krishna3},
which gives an explicit description of the ordinary cobordism ring of a 
variety with a torus action as a quotient of the equivariant cobordism ring.

To state our main results, let $T$ be a split torus of rank $n$ over $k$ and 
let $M$ denote the lattice of the one-parameter subgroups of $T$. We identify
$M^{\vee}$ with group of characters of $T$. Let $<,> : M \times M^{\vee} \to \Z$
be the natural pairing. Let 
$X = X(\Delta)$ be a smooth toric variety associated to a fan 
$\Delta$ in $M_{\R}$. We refer the reader to \cite{Fulton1} for the basics 
of toric varieties. Let $\Delta_1$ denote the set of one-dimensional cones
in $\Delta$. Let $\{v_{\rho}| \rho \in \Delta_1\}$ denote the set of 
primitive vectors in $M$ along the one-dimensional faces of $\Delta$. 
Let $\Delta^0_1$ denote the collection of subsets $S \subseteq \Delta_1$ such 
that $S = \{\rho_1, \cdots , \rho_s\}$ is not contained in any maximal cone of 
$\Delta$. 

Let $S = S(T)$ denote the
$T$-equivariant cobordism ring $\Omega^*_T(k)$ of the base field.
Let $\Omega^*_T(X)$ and $\Omega^*(X)$ denote the $T$-equivariant
and the ordinary cobordism rings of $X$ respectively.
For $\rho \in \Delta_1$, the corresponding orbit closure $V_{\rho}$ is a
$T$-invariant smooth closed subvariety of $X$ which is a Weil
divisor on $X$. Let $[V_{\rho}]$ denote the cobordism cycle $[V_{\rho} \to X]$
in $\Omega^*(X)$. 

Let $\bL$ denote the Lazard ring. It is known that $\bL = 
{\underset{i \le 0}\oplus}\bL_i$ is a graded ring which is canonically
isomorphic to $\Omega^*(k)$. There is a formal (commutative) group law
on $\bL$ represented by a power series $F(u,v)$ in $\bL[[u,v]]$ such that 
$(\bL, F)$ is the universal commutative formal group law of rank one.  

Let $\bL[[t_{\rho}]]$ denote the graded power 
series ring over $\bL$ in the variables $\{t_{\rho}| \rho \in \Delta_1\}$.
For $x_1, \cdots, x_s \in \bL[[t_{\rho}]]$ and $n_i \in \Z$, let
$\stackrel{s}{\underset{i = 1}\sum} [n_i]_F x_i$ denote the sum
according to the formal group law $F$ ({\sl cf.} Section~\ref{section:AC}). 
Let $I_{\Delta}$ denote the graded ideal of $\bL[[t_{\rho}]]$ generated by the 
set of monomials 
$\{{\underset{\rho \in S}\prod} t_{\rho} | S \in  \Delta^0_1\}$.  
Let $\ov{I}_{\Delta}$ denote the graded ideal of the polynomial ring 
$\bL[t_{\rho}]$ generated by the set of monomials 
$\{{\underset{\rho \in S}\prod} t_{\rho} | S \in  \Delta^0_1\} \bigcup
\{t^{n+1}_{\rho}| \rho \in \Delta_1\}$.
The following results are the main features of this paper.
\begin{thm}\label{thm:SRP-Main**}
For a smooth toric variety $X = X(\Delta)$ associated to a fan $\Delta$ in
$M_{\R}$, there is a natural map of $S$-algebras
\[
\Psi_X : \frac{\bL[[t_{\rho}]]}{I_{\Delta}} \to \Omega^*_T(X)
\]
which is an isomorphism.
\end{thm}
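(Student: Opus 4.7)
The plan is to construct $\Psi_X$ explicitly by sending $t_\rho$ to the equivariant first Chern class $c^T_1(\sO_X(V_\rho)) \in \Omega^1_T(X)$, check that it kills $I_\Delta$, and then match both sides cone by cone using the decomposition of Theorem~\ref{thm:decomposition*}.

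First I would define an $S$-algebra map out of the polynomial ring $\bL[t_\rho]$ by $t_\rho \mapsto c^T_1(\sO_X(V_\rho))$. Because equivariant cobordism as set up in \cite{Krishna4} is complete with respect to the augmentation filtration and each $c^T_1(\sO_X(V_\rho))$ lies in positive filtration, this extends uniquely to the graded power series ring $\bL[[t_\rho]]$. To see that the extended map factors through $\bL[[t_\rho]]/I_\Delta$, I have to check that for every $S = \{\rho_1,\dots,\rho_s\} \in \Delta^0_1$ one has $\prod_{i=1}^{s} c^T_1(\sO_X(V_{\rho_i})) = 0$. Since $X$ is smooth, any nonempty intersection of the $V_{\rho_i}$ is a transverse intersection of smooth $T$-invariant divisors, and its equivariant class equals the product of the corresponding first Chern classes (by the standard behaviour of $c_1$ of line bundles combined with the formal group law). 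For $S \in \Delta^0_1$ that intersection is empty, so the product vanishes.

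To prove $\Psi_X$ is an isomorphism, I would exploit Theorem~\ref{thm:decomposition*}, which I expect expresses $\Omega^*_T(X)$ as a direct sum of $S$-modules indexed by the cones of $\Delta$, with the summand attached to a cone $\sigma$ generated by the equivariant fundamental class $[V_\sigma]_T$ that in turn equals the product $\prod_{\rho \in \sigma_1} c^T_1(\sO_X(V_\rho))$ of Chern classes of the divisors through whose transverse intersection $V_\sigma$ is cut out. On the algebraic side, the quotient $\bL[[t_\rho]]/I_\Delta$ admits a Stanley--Reisner-type direct sum decomposition, since every monomial surviving the relations is supported on the one-dimensional faces of some cone of $\Delta$. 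The map $\Psi_X$ respects these decompositions by construction, and on each summand the comparison reduces to a local calculation on the affine piece $U_\sigma \cong \A^d \times (T/T_\sigma)$, where $\Omega^*_T(U_\sigma)$ is the graded power series ring in $t_{\rho_1},\dots,t_{\rho_d}$ over $S(T/T_\sigma)$.

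The main obstacle, I anticipate, is in showing that the two cone-by-cone decompositions match \emph{exactly} as graded $S$-modules, not merely that their generators correspond. This requires a careful filtration-and-completion argument of the kind pioneered by Vezzosi--Vistoli in equivariant $K$-theory: the universal formal group law replaces the multiplicative group law, so one must verify that the localization formulas at cones (both for orbits and for their closures) descend correctly to the completed power series setting. The localization and Mayer--Vietoris machinery for equivariant cobordism in \cite{Krishna4}, together with the presentation results of \cite{Krishna3}, should make this work, but this is where the combinatorial and filtration bookkeeping is concentrated.
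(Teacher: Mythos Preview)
Your construction of $\Psi_X$ is essentially the one in the paper: the paper defines $u_\rho$ componentwise in $\prod_{\sigma\in\Delta_{\max}} S(T_\sigma)$ and then checks it lies in $\Omega^*_T(X)$, but this $u_\rho$ is exactly $c^T_1(\sO_X(V_\rho))$ (as the paper later observes when proving Theorem~\ref{thm:COBT}). Your verification of the Stanley--Reisner relations via transverse intersection is fine and equivalent to the paper's componentwise check.

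The gap is in the isomorphism step. Your expectation about Theorem~\ref{thm:decomposition*} is wrong: it does \emph{not} give a direct sum decomposition of $\Omega^*_T(X)$ indexed by cones with summands generated by $[V_\sigma]_T$. What it gives is an iterated \emph{fiber product}
\[
\Omega^*_T(X)\;\cong\;\Omega^*_T(X_n)\times_{\Omega^*_T(N_{n,n-1})}\Omega^*_T(X_{n-1})\times_{\cdots}\times_{\Omega^*_T(N_{1,0})}\Omega^*_T(X_0),
\]
equivalently (Theorem~\ref{thm:TORIC-I}) an injection $\Omega^*_T(X)\hookrightarrow\prod_{\sigma\in\Delta_{\max}} S(T_\sigma)$ with image cut out by compatibility conditions along common faces. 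There is no canonical $S$-module splitting by cones in general (for non-projective $X$ one cannot even expect freeness over $S$ with such a basis), so your plan of matching two direct-sum decompositions summand-by-summand does not get off the ground.

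The paper's argument is different and more elementary: it proceeds by induction on $|\Delta|$. One picks a maximal cone $\sigma$, sets $\Delta'=\Delta\setminus\{\sigma\}$, and uses the localization sequence for the closed orbit $O_\sigma\hookrightarrow X$ with open complement $X'=X(\Delta')$. Cohomological rigidity (Corollary~\ref{cor:rigidstrata}) makes this a short exact sequence, and one checks it matches the obvious short exact sequence $0\to\bL[[t_{\rho_1},\dots,t_{\rho_s}]]\xrightarrow{x_\sigma}\bL[[t_\rho]]/I_\Delta\to\bL[[t_\rho]]/(I_\Delta,x_\sigma)\to 0$ on the Stanley--Reisner side, where $x_\sigma=\prod_{\rho\le\sigma}t_\rho$. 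The only use of the decomposition theorem is the injectivity of $\Phi_X$ (Theorem~\ref{thm:TORIC-I}), needed to verify commutativity of the relevant square. If you want to salvage your approach, you would have to show that the Stanley--Reisner ring satisfies the \emph{same} fiber-product description as in Theorem~\ref{thm:TORIC-I} and that $\Psi_X$ intertwines the two; this is doable but amounts to re-deriving the inductive argument in disguise.
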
 

\begin{thm}\label{thm:COBT**}
Let $X = X(\Delta)$ be a smooth toric variety associated to a fan $\Delta$
in $M_{\R}$. Then the assignment $t_{\rho} \mapsto [V_{\rho}]$ defines an
$\bL$-algebra isomorphism
\begin{equation}\label{eqn:COBT1}
\ov{\Psi}_X: \frac{\bL[t_{\rho}]}
{\left(\ov{I}_{\Delta}, \underset{\rho \in \Delta_1}\sum
[<\chi, v_{\rho}>]_F \ t_{\rho}\right)} \to \Omega^*(X),
\end{equation}
where $\chi$ runs over ${M}^{\vee}$.
\end{thm}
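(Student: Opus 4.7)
The plan is to deduce the statement from Theorem~\ref{thm:SRP-Main**} by passing from the equivariant to the ordinary cobordism ring via \cite[Theorem~3.4]{Krishna3}, which expresses $\Omega^*(X)$ as the quotient $\Omega^*_T(X)/I_+ \cdot \Omega^*_T(X)$, where $I_+ \subset S$ is the augmentation ideal of $S = \Omega^*_T(k)$.

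Combined with the identification $\Omega^*_T(X) \cong \bL[[t_\rho]]/I_\Delta$ from Theorem~\ref{thm:SRP-Main**}, the first task is to compute the image of $I_+$. Since $S \cong \bL[[\chi_1, \ldots, \chi_n]]$ is topologically generated over $\bL$ by the characters of $T$, it suffices to identify the image of each $\chi \in M^\vee$. The character $\chi$ corresponds to a $T$-equivariant line bundle $L_\chi$ on $X$ whose $T$-invariant Cartier divisor is the standard toric expression $\sum_\rho \<\chi, v_\rho\> V_\rho$. Using that $\Psi_X$ sends $t_\rho$ to the equivariant class of $V_\rho$, and applying the formal-group-law formula for the first Chern class of a tensor product of line bundles in an oriented cohomology theory, the image of $\chi$ becomes $\sum_\rho [\<\chi, v_\rho\>]_F \, t_\rho$. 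This yields
\[
\Omega^*(X) \;\cong\; \frac{\bL[[t_\rho]]}{\bigl(I_\Delta,\; \{\textstyle\sum_{\rho} [\<\chi, v_\rho\>]_F\, t_\rho\}_{\chi \in M^\vee}\bigr)}.
\]

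To convert this power-series presentation into the polynomial-ring presentation of the statement, it suffices to show that each $t_\rho^{n+1}$ already lies in the displayed ideal; once this nilpotence is in hand, the $(t_\rho)$-adic topology collapses modulo the relations, the power-series and polynomial quotients coincide canonically, and the assignment $t_\rho \mapsto [V_\rho]$ (the non-equivariant specialization of the equivariant class) realizes the claimed isomorphism $\ov{\Psi}_X$. For a fixed $\rho_0 \in \Delta_1$, the nilpotence $t_{\rho_0}^{n+1} \equiv 0$ is proved by extending the primitive vector $v_{\rho_0}$ to a $\Z$-basis of $M$ and using the dual character $\chi_0$ (with $\<\chi_0, v_{\rho_0}\> = 1$) to express $t_{\rho_0}$ as a power series in the remaining $t_\rho$'s modulo the character ideal, then iterating this elimination together with the Stanley-Reisner relations of $I_\Delta$, which kill every squarefree monomial whose support is not contained in a single cone of $\Delta$ (in particular every squarefree monomial of support size exceeding $n$).

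This final combinatorial step is the principal obstacle: unlike in the additive Chow-ring analogue where the character relations are genuinely linear, here the formal group law contributes higher-order correction tails at every stage, and one must argue that they can be uniformly controlled using $I_\Delta$ together with the character relations. The argument is analogous to, but technically more involved than, the corresponding step in the Vezzosi-Vistoli computation of equivariant $K$-theory of toric varieties cited in the introduction as the model for the present work; once it is carried out, $\ov{\Psi}_X$ is identified with the isomorphism produced above.
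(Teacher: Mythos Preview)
Your overall strategy matches the paper's: apply Theorem~\ref{thm:SRP-Main**}, pass to the ordinary cobordism via \cite[Theorem~3.4]{Krishna3}, identify the image of the augmentation ideal with the formal sums $\sum_{\rho}[\langle\chi,v_\rho\rangle]_F\,t_\rho$, and then replace the power-series presentation by a polynomial one using nilpotence of the $t_\rho$. The first three steps are essentially the paper's argument.

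The genuine gap is in the last step. You set out to prove $t_{\rho_0}^{n+1}\equiv 0$ by a direct combinatorial manipulation of the character relations and the Stanley--Reisner ideal, and you yourself flag this as ``the principal obstacle'' and leave it undone. This route is hard precisely for the reason you name: the formal group law introduces higher-order tails at every substitution, and controlling them uniformly inside $I_\Delta$ plus the character relations is delicate. The paper sidesteps the combinatorics entirely. Once the power-series isomorphism
\[
\wt{\Psi}_X:\ \frac{\bL[[t_\rho]]}{\bigl(I_\Delta,\ \sum_\rho[\langle\chi,v_\rho\rangle]_F\,t_\rho\bigr)}\ \xrightarrow{\ \cong\ }\ \Omega^*(X)
\]
is in hand, one simply observes that $\wt{\Psi}_X(t_\rho)=[V_\rho]$ is homogeneous of degree one and that $\Omega^{>n}(X)=0$ because $\dim X=n$; hence $[V_\rho]^{n+1}=0$ in $\Omega^*(X)$, and pulling back through the isomorphism gives $t_\rho^{n+1}=0$ in the quotient. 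The passage from power series to polynomials is then immediate via $\bL[t_\rho]/(t_\rho^{n+1})\cong\bL[[t_\rho]]/(t_\rho^{n+1})$. In short, you are trying to prove nilpotence on the algebraic side before using the isomorphism, whereas the paper proves it on the geometric side after using it, which is trivial.
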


Since $\Omega^*(-)$ is the universal oriented cohomology theory, we see at
once that all other oriented cohomology theories (e.g. $K$-theory,
Chow groups, \'etale and singular cohomology) of smooth toric varieties
can be deduced from Theorem~\ref{thm:COBT**} in a simple way. 
 
As one would also expect, the computation of any oriented cohomology ring of
toric varieties is the foundational step in computing an oriented
cohomology of various spherical varieties. It is known ({\sl cf.}
\cite[Theorem~8.7]{Krishna4}) that the rational equivariant cobordism of
a variety $X$ with the action of a reductive group is the subgroup of
invariants under the Weyl group action on the equivariant cobordism of $X$ for 
the action of a maximal torus of $G$. This reduces most of the computations
of the cobordism ring of spherical varieties to the case of the
cobordism ring of toric varieties. The results of this paper will be used
in \cite{HK} to compute the cobordism ring of certain spherical varieties.
If $X$ is a complex toric variety, then the techniques of this paper 
and the known relation between the algebraic and the complex cobordism can
be easily used to compute the complex cobordism of any smooth toric variety.
The complex cobordism ring of toric manifolds in topology were described
earlier by Buchstaber and Ray in \cite{BR}.   

We end the introduction with a brief outline of this paper.
We recollect all the definitions and the relevant basic properties
of the ordinary and the equivariant cobordism in the next section.
In Section~\ref{section:PRELIMS}, we establish some preliminary results
which are frequently used in this paper. We also introduce the notion
of cohomological rigidity for the equivariant cobordism in this section
and describe its consequences. In Sections~\ref{section:STRATA} and
~\ref{section:DECOMP}, our goal is to prove our main intermediate
result which describes the equivariant cobordism ring as a subring
of the equivariant cobordism rings of the orbits corresponding to the maximal
cones in the associated fan. This is the crucial ingredient in the proof
of our main result about the equivariant cobordism ring of smooth toric
varieties. In Section~\ref{section:SRP}, we prove 
Theorem~\ref{thm:SRP-Main} which describes the equivariant cobordism ring
in terms of a Stanley-Reisner algebra. The ordinary cobordism ring of
a smooth toric variety is obtained in the final section using
Theorem~\ref{thm:SRP-Main} and \cite[Theorem~3.4]{Krishna3}.

\section{Recollection of equivariant cobordism}\label{section:AC}
Let $k$ be a field of characteristic zero.
Since we shall be concerned with the study of 
schemes with group actions and the associated quotient schemes, and since 
such quotients often require the original scheme to be quasi-projective,
we shall assume throughout this paper that all schemes over $k$ are 
quasi-projective. 

In this section, we briefly recall the definition of equivariant
algebraic cobordism and some of its main properties from \cite{Krishna4}.
Since most of the results of \cite{Krishna4} will be repeatedly used in this 
text, we summarize them here for reader's convenience. 
For the definition and all details about the algebraic
cobordism used in this paper, we refer the reader to the work of Levine and
Morel \cite{LM}. 
\\

{\bf Notations.} We shall denote the category of quasi-projective $k$-schemes 
by $\sV_k$. By a scheme, we shall mean an object 
of $\sV_k$. The category of smooth quasi-projective schemes
will be denoted by $\sV^S_k$. If $G$ is a linear algebraic group over $k$, we 
shall denote the category of quasi-projective $k$-schemes with a $G$-action
and $G$-equivariant maps by $\sV_G$. The associated category of smooth
$G$-schemes will be denoted by $\sV_G^S$. All $G$-actions in this paper will be
assumed to be linear. Recall that this means that all $G$-schemes are
assumed to admit $G$-equivariant ample line bundles. This assumption is
always satisfied for normal schemes ({\sl cf.} \cite[Theorem~2.5]{Sumihiro}, 
\cite[5.7]{Thomason3}). In particular, any action of $G$ on a toric
variety is linear.
\\

Recall that the Lazard ring $\bL$ is a polynomial ring over $\Z$ on infinite 
but countably many variables and is given by the quotient of the polynomial 
ring $\Z[A_{ij}| (i,j) \in \N^2]$ by the relations, which uniquely define the 
universal formal group law $F_{\bL}$ of rank one on $\bL$. 
Recall that a cobordism cycle over a $k$-scheme $X$ is
a family $\alpha = [Y \xrightarrow{f} X, L_1, \cdots , L_r]$,
where $Y$ is a smooth scheme, the map $f$ is projective, and $L_i$'s are line 
bundles on $Y$. Here, one allows the set of line bundles to be empty.
The degree of such a cobordism cycle is  defined to be ${\rm deg}(\alpha) =
{\rm dim}_k(Y)-r$ and its codimension is defined to be ${\rm dim}(X) -
{\rm deg}(\alpha)$. If $\sZ_*(X)$ is the free abelian group generated by the 
cobordism cycles of the above type with $Y$ irreducible, then
$\sZ_*(X)$ is graded by the degree of cycles.
The algebraic cobordism group of $X$ is defined as
\[
\Omega_*(X) = \frac{\sZ_*(X)}{\sR_*(X)},
\]
where ${\sR_*(X)}$ is the graded subgroup generated by relations which are
determined by the dimension and the section axioms and the above formal group 
law. If $X$ is equi-dimensional, we set $\Omega^i(X) = \Omega_{{\rm dim}(X)-i}(X)$
and grade $\Omega^*(X)$ by the codimension of the cobordism cycles. 
It was shown by Levine and Pandharipande \cite{LP} that the cobordism group
$\Omega_*(X)$ can also be defined as the quotient
\begin{equation}\label{eqn:LP*}
\Omega_*(X) = \frac{\sZ'_*(X)}{\sR'_*(X)},
\end{equation}
where $\sZ'_*(X)$ is the free abelian group on cobordism cycles 
$[Y \xrightarrow{f} X]$ with $Y$ smooth and irreducible and $f$ projective.
The graded subgroup $\sR'_*(X)$ is generated by cycles satisfying the relation
of {\sl double point degeneration}.

Let $X$ be a $k$-scheme of dimension $d$. For $j \in \Z$, let $Z_j$ be the
set of all closed subschemes $Z \subset X$ such that ${\rm dim}_k(Z) \le j$
(we assume ${\rm dim}(\0) = - \infty$). The set $Z_j$ is then ordered by 
the inclusion. For $i \ge 0$,  we set
\[
\Omega_i(Z_j) = {\underset{Z \in Z_j} \varinjlim} \Omega_i(Z) 
\ \ {\rm and} \ \
\Omega_*(Z_j) = {\underset{i \ge 0} \bigoplus} \ \Omega_i(Z_j).
\]
It is immediate that $\Omega_*(Z_j)$ is a graded $\bL$-module and there is
a graded $\bL$-linear map $\Omega_*(Z_j) \to \Omega_*(X)$.
We define $F_j\Omega_*(X)$ to be the image of the natural $\bL$-linear map
$\Omega_*(Z_j) \to \Omega_*(X)$. In other words, $F_j\Omega_*(X)$ is the image
of all $\Omega_*(W) \to \Omega_*(X)$, where $W \to X$ is a projective map
such that ${\rm dim}({\rm Image}(W)) \le j$.
One checks at once that there is a canonical {\sl niveau filtration}
\begin{equation}\label{eqn:niveau1}
0 = F_{-1}\Omega_*(X) \subseteq F_0\Omega_*(X) \subseteq \cdots \subseteq
F_{d-1}\Omega_*(X) \subseteq F_d\Omega_*(X) = \Omega_*(X).
\end{equation}

\subsection{Equivariant cobordism}
In this text, $G$ will denote a linear algebraic group of dimension $g$ 
over $k$. This group will be a torus for the most parts of this paper.
All representations of $G$ will be finite dimensional. 
Recall form \cite{Krishna4} that for any integer $j \ge 0$, a {\sl good pair}  
$\left(V_j,U_j\right)$ corresponding to $j$ for the $G$-action is a pair 
consisting of a $G$-representation $V_j$ and an open subset $U_j \subset V_j$ 
such that the codimension of the complement is at least $j$ and $G$ acts 
freely on $U_j$ with quotient ${U_j}/G$ a quasi-projective scheme.
It is known that such good pairs always exist. 
  
Let $X$ be a $k$-scheme of dimension $d$ with a $G$-action.
For $j \ge 0$, let $(V_j, U_j)$ be an $l$-dimensional good pair 
corresponding to $j$. For $i \in \Z$, if we set
\begin{equation}\label{eqn:E-cob*}
{\Omega^G_i(X)}_j =  \frac{\Omega_{i+l-g}\left({X\stackrel{G} {\times} U_j}\right)}
{F_{d+l-g-j}\Omega_{i+l-g}\left({X\stackrel{G} {\times} U_j}\right)},
\end{equation}
then it is known that ${\Omega^G_i(X)}_j$ is independent of the 
choice of the good pair $(V_j, U_j)$. 
It is also known that for each $j \ge 0$, ${\Omega^G_*(X)}_j =
{\underset{i \in \Z}\bigoplus} \ {\Omega^G_i(X)}_j$ is a graded $\bL$-module. 
Moreover, there is a natural surjective 
map $\Omega^G_*(X)_{j'} \surj \Omega^G_*(X)_j$ of graded $\bL$-modules for 
$j' \ge j \ge 0$.

\begin{defn}\label{defn:ECob}
Let $X$ be a $k$-scheme of dimension $d$ with a $G$-action. For any 
$i \in \Z$, we define the {\sl equivariant algebraic cobordism} of $X$ to be 
\[
\Omega^G_i(X) = {\underset {j} \varprojlim} \ \Omega^G_i(X)_j.
\]
\end{defn}
The reader should note from the above definition that unlike the ordinary
cobordism, the equivariant algebraic cobordism $\Omega^G_i(X)$ can be 
non-zero for any $i \in \Z$. We set 
\[
\Omega^G_*(X) = {\underset{i \in \Z} \bigoplus} \ \Omega^G_i(X).
\]
If $X$ is an equi-dimensional $k$-scheme with $G$-action, we let
$\Omega^i_G(X) = \Omega^G_{d-i}(X)$ and $\Omega^*_G(X) =
{\underset{i \in \Z} \oplus} \ \Omega^i_G(X)$. 
It is known that if $G$ is trivial, then the $G$-equivariant cobordism
reduces to the ordinary one.
\begin{remk}\label{remk:Csmooth}
It is easy to check from the above definition of the niveau filtration 
that if $X$ is a smooth and irreducible $k$-scheme of dimension $d$,
then $F_j\Omega_i(X) = F^{d-j}\Omega^{d-i}(X)$, where $F^{\bullet}\Omega^*(X)$ 
is the coniveau filtration used in \cite{DD}. Furthermore, one also 
checks in this case that if $G$ acts on $X$, then
\begin{equation}\label{eqn:Csmooth1}
\Omega^i_G(X) = {\underset {j} \varprojlim} \
\frac{\Omega^{i}\left({X\stackrel{G} {\times} U_j}\right)}
{F^{j}\Omega^{i}\left({X\stackrel{G} {\times} U_j}\right)},
\end{equation}
where $(V_j, U_j)$ is a good pair corresponding to any $j \ge 0$.
Thus the above definition of the equivariant cobordism
coincides with that of \cite{DD} for smooth schemes.
\end{remk}

The following important result shows that if 
we suitably choose a sequence of good pairs ${\{(V_j, U_j)\}}_{j \ge 0}$, then 
the above equivariant cobordism group can be computed without taking 
quotients by the niveau filtration. This is often very helpful in
computing the equivariant cobordism groups. It is moreover known \cite{EG} that 
such a sequence of good pairs always exist.
  
\begin{thm}\label{thm:NO-Niveu}$(${\sl cf.} \cite[Theorem~6.1]{Krishna4}$)$
Let ${\{(V_j, U_j)\}}_{j \ge 0}$ be a sequence of $l_j$-dimensional 
good pairs such that \\
$(i)$ $V_{j+1} = V_j \oplus W_j$ as representations of $G$ with ${\rm dim}(W_j) 
> 0$ and \\
$(ii)$ $U_j \oplus W_j \subset U_{j+1}$ as $G$-invariant open subsets. \\ 
Then for any scheme $X$ as above and any $i \in \Z$,
\[
\Omega^G_i(X) \xrightarrow{\cong} {\underset{j}\varprojlim} \
\Omega_{i+l_j-g}\left(X \stackrel{G}{\times} U_j\right).
\]
\end{thm}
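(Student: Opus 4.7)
The plan is to identify both sides as inverse limits and show that they agree by analyzing the ``niveau kernel'' between them. Condition (ii) gives an open immersion $X \stackrel{G}{\times} (U_j \oplus W_j) \hookrightarrow X \stackrel{G}{\times} U_{j+1}$, and (i) makes $\pi_j: X \stackrel{G}{\times} (U_j \oplus W_j) \to X \stackrel{G}{\times} U_j$ a $G$-equivariant vector bundle of rank $r_j := l_{j+1} - l_j$, whose homotopy invariance isomorphism $\pi_j^*$ is inverted by the refined Gysin pullback $\iota_j^*$ along the zero section. I would assemble $\{A_j := \Omega_{i + l_j - g}(X \stackrel{G}{\times} U_j)\}_j$ into an inverse system by taking $T_j := \iota_j^* \circ (\text{restriction})$ as the transition $A_{j+1} \to A_j$. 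The quotient maps $A_j \twoheadrightarrow B_j := \Omega^G_i(X)_j$ are compatible with transitions on both sides, yielding a short exact sequence of pro-groups
\[
0 \to \{K_j\} \to \{A_j\} \to \{B_j\} \to 0, \qquad K_j := F_{d + l_j - g - j}\, A_j,
\]
whose associated inverse-limit long exact sequence reduces the theorem to showing $\varprojlim K_j = 0 = \varprojlim{}^1 K_j$.

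For this, I would bound the image of iterated transitions on the niveau filtration. Pullback along the open immersion preserves the filtration level: a cycle supported in $W$ of dimension $\le m$ restricts to one supported in $W \cap (X \stackrel{G}{\times} (U_j \oplus W_j))$ of dimension $\le m$. Pullback along the zero section $\iota_j$ shifts the filtration down by $r_j$: the deformation-to-normal-cone construction represents $\iota_j^* \alpha$ by a class of degree $\dim W - r_j$ on the refined intersection $W \cap (X \stackrel{G}{\times} U_j)$, which admits a cycle representative of image dimension at most $\dim W - r_j$. Combining, $T_j$ sends $F_m A_{j+1}$ into $F_{m - r_j} A_j$; iterating from level $j + k$ down to level $j$, the image of $K_{j + k}$ in $A_j$ lies in $F_{d + l_j - g - (j + k)}\, A_j$. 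This vanishes as soon as $k$ exceeds the finite threshold $d + l_j - g - j$. Hence $\{K_j\}$ is pro-zero, and both $\varprojlim K_j$ and (via Mittag-Leffler) $\varprojlim{}^1 K_j$ vanish.

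The hardest step is the niveau filtration behavior of $\iota_j^*$. In Chow the shift is automatic from $CH_k = F_k CH$, but in algebraic cobordism $\Omega_k(Y)$ strictly contains $F_k \Omega_*(Y)$ in general, so one must argue cycle-theoretically that the refined intersection class admits a representative with image dimension at most $\dim W - r_j$. This should follow from the regular-embedding structure of the zero section combined with the bivariant formalism of Levine--Morel, but it is the one nontrivial input beyond the formal pro-group homological algebra.
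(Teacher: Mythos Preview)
The paper does not supply its own proof of this statement; it is quoted from \cite[Theorem~6.1]{Krishna4} and used as a black box throughout. So there is nothing in the present paper to compare your argument against.

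On the substance of your proposal: the strategy of exhibiting $\{K_j\}$ as a pro-zero system is the natural one, and the reduction to the niveau shift $T_j(F_m)\subset F_{m-r_j}$ is correct. But your justification of that shift is flawed. For a vector bundle $\pi:E\to B$ of rank $r$ with zero section $\iota$, it is \emph{not} true that the refined intersection of a closed $W\subset E$ with the zero section has dimension at most $\dim W-r$: take $W$ to be the zero section itself, where the intersection is all of $W$. So the sentence ``which admits a cycle representative of image dimension at most $\dim W - r_j$'' is unjustified. What one actually needs is that the isomorphism $\pi^*:\Omega_*(B)\xrightarrow{\cong}\Omega_{*+r}(E)$ restricts to an isomorphism $F_m\Omega_*(B)\xrightarrow{\cong}F_{m+r}\Omega_*(E)$; once this is known, $(\pi^*)^{-1}=\iota^*$ indeed carries $F_m$ to $F_{m-r}$ and your pro-zero argument goes through verbatim. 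This filtration compatibility is not a formal consequence of the bivariant formalism and does require an argument; you should consult \cite{Krishna4} directly for how it is handled there.
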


For equi-dimensional schemes, we shall write the (equivariant) cobordism groups
cohomologically.
The $G$-equivariant cobordism group $\Omega^*(k)$ of the ground field $k$ 
is denoted by $\Omega^*(BG)$ and is called the cobordism of the 
{\sl classifying space} of $G$. We shall often write it as $S(G)$.

\subsection{Change of groups}
If $H \subset G$ is a closed subgroup of dimension $h$, then any 
$l$-dimensional good pair 
$(V_j, U_j)$ for $G$-action is also a good pair for the induced $H$-action. 
Moreover, for any $X \in \sV_G$ of dimension $d$, 
$X \stackrel{H}{\times} U_j \to X \stackrel{G}{\times} U_j$
is an \'etale locally trivial $G/H$-fibration and hence a smooth map 
({\sl cf.} \cite[Theorem~6.8]{Borel}) of relative dimension $g-h$. 
This induces the pull-back map
\begin{equation}\label{eqn:res}
r^G_{H,X} : \Omega^G_*(X) \to \Omega^H_*(X).
\end{equation}
Taking $H = \{1\}$, we get the {\sl forgetful} map 
\begin{equation}\label{eqn:res}
r^G_X : \Omega^G_*(X) \to \Omega_*(X)
\end{equation}
from the equivariant to the non-equivariant (ordinary) cobordism groups. 
Since $r^G_{H,X}$ is obtained as a pull-back under the smooth map, it commutes
with any projective push-forward and smooth pull-back ({\sl cf.} 
Theorem~\ref{thm:Basic}). 

The equivariant cobordism for the action of a group $G$ is related with
the equivariant cobordism for the action of the various subgroups of $G$
by the following. We refer to [{\sl loc. cit.}, Proposition~5.5] for a proof.

\begin{prop}[Morita Isomorphism]\label{prop:Morita}
Let $H \subset G$ be a closed subgroup and let $X \in {\sV}_H$.
Then there is a canonical isomorphism
\begin{equation}\label{eqn:MoritaI}
\Omega^G_*\left(G \stackrel{H}{\times} X\right) \xrightarrow{\cong}
\Omega^H_*(X).
\end{equation}
\end{prop}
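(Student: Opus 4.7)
The plan is to reduce the proposition, via Theorem~\ref{thm:NO-Niveu}, to a canonical identification of mixed quotients at each finite level. First I would fix a sequence $\{(V_j, U_j)\}_{j \ge 0}$ of good pairs for $G$ satisfying the hypotheses of Theorem~\ref{thm:NO-Niveu}. Because $H \subset G$ is closed and the $G$-action on each $U_j$ is free, the restricted $H$-action on $U_j$ is also free, and as noted just above the proposition, any good pair for $G$ is automatically a good pair for $H$. Hence the same sequence serves as a sequence of good pairs for $H$ satisfying conditions (i)--(ii) of Theorem~\ref{thm:NO-Niveu} for the $H$-action as well.

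The heart of the argument is the canonical isomorphism of quasi-projective $k$-schemes
\[
\Phi_j : X \stackrel{H}{\times} U_j \;\xrightarrow{\;\cong\;}\; (G \stackrel{H}{\times} X) \stackrel{G}{\times} U_j, \qquad [x, u] \;\longmapsto\; [[1, x], u].
\]
To verify this, I would first check that the assignment $(x, u) \mapsto ([1, x], u)$ on total spaces is invariant under the diagonal $H$-action on $X \times U_j$, using the identity $[h, x] = [1, h \cdot x]$ that holds in $G \stackrel{H}{\times} X$ for $h \in H$. Conversely, every orbit of the diagonal $G$-action on $(G \stackrel{H}{\times} X) \times U_j$ admits a representative of the form $([1, x], u)$, obtained by using the free $G$-action to normalize the $G$-coordinate, and two such representatives are $G$-equivalent precisely when they are related by the diagonal $H$-action on $X \times U_j$, producing the inverse of $\Phi_j$. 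That $\Phi_j$ actually descends to a morphism of $k$-schemes (and hence to an isomorphism) follows from the universal properties of the quotients involved, which exist in $\sV_k$ thanks to the linearity assumption on all the actions.

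Granting $\Phi_j$, applying Theorem~\ref{thm:NO-Niveu} on each side produces the chain
\[
\Omega^G_i(G \stackrel{H}{\times} X) \;\cong\; \varprojlim_j \Omega_{i + l_j - g}\!\left((G \stackrel{H}{\times} X) \stackrel{G}{\times} U_j\right) \;\cong\; \varprojlim_j \Omega_{i + l_j - g}(X \stackrel{H}{\times} U_j) \;\cong\; \Omega^H_{i - (g - h)}(X),
\]
where the first and last isomorphisms are Theorem~\ref{thm:NO-Niveu} applied to $G \stackrel{H}{\times} X$ and to $X$ respectively, and the middle isomorphism is induced by the $\Phi_j$'s. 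Summing over $i$ yields the proposition as stated; in cohomological notation this becomes degree preserving, since $\dim(G \stackrel{H}{\times} X) = \dim(X) + g - h$.

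The principal obstacle is the careful verification of $\Phi_j$ as an isomorphism of $k$-schemes (rather than merely a bijection on orbits), and its functoriality in $j$ relative to the transitions $(V_j, U_j) \hookrightarrow (V_{j+1}, U_{j+1})$, so that the induced maps on ordinary cobordism assemble into a morphism of the inverse systems presenting equivariant cobordism in Theorem~\ref{thm:NO-Niveu}. Standard descent for free actions of linear algebraic groups, together with the compatibility built into hypotheses (i)--(ii) of Theorem~\ref{thm:NO-Niveu}, should handle both points.
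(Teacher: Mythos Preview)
Your approach is correct and is exactly the standard argument for Morita-type identifications in equivariant theories defined via the Borel construction: the key scheme-theoretic isomorphism $X \stackrel{H}{\times} U_j \cong (G \stackrel{H}{\times} X) \stackrel{G}{\times} U_j$ at each level, combined with Theorem~\ref{thm:NO-Niveu} on both sides, gives the result, and you have handled the degree bookkeeping (the shift by $g-h$ in homological degree, vanishing in cohomological degree) correctly. Note that the paper does not actually prove this proposition; it simply cites \cite[Proposition~5.5]{Krishna4}, where the argument is precisely the one you outline.
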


\subsection{Fundamental class of cobordism cycles}
\label{subsection:FundC}
Let $X \in \sV_G$ and let $Y \xrightarrow{f} X$ be a morphism in $\sV_G$
such that $Y$ is smooth of dimension $d$ and $f$ is projective. For any
$j \ge 0$ and any $l$-dimensional good pair $(V_j, U_j)$, 
$[Y_G \xrightarrow{f_G} X_G]$ is an ordinary cobordism cycle of dimension
$d+l-g$ by [{\sl loc.cit.}, Lemma~5.3] and hence defines an element 
$\alpha_j \in {\Omega^G_d(X)}_j$. Moreover, it is evident that the image of 
$\alpha_{j'}$ is $\alpha_j$ for $j' \ge j$. Hence we get a unique element
$\alpha \in \Omega^G_d(X)$, called the {\sl G-equivariant fundamental class}
of the cobordism cycle $[Y \xrightarrow{f} X]$. We also see from this
more generally that if $[Y \xrightarrow{f} X, L_1, \cdots, L_r]$ is as
above with each $L_i$ a $G$-equivariant line bundle on $Y$, then this defines
a unique class in $\Omega^G_{d-r}(X)$. It is interesting question to ask under 
what conditions on the group $G$, the equivariant cobordism group 
$\Omega^G_*(X)$ is generated by the fundamental classes of $G$-equivariant
cobordism cycles on $X$ as $S(G)$-module. This is known to be true for torus
actions by \cite[Theorem~4.11]{Krishna3}.

\subsection{Basic properties}
The following result summarizes the basic properties of the equivariant
cobordism.

\begin{thm}\label{thm:Basic}$(${\sl cf.} \cite[Theorems~5.1, 5.4]{Krishna4}$)$
The equivariant algebraic cobordism satisfies the following properties. \\
$(i)$ {\sl Functoriality :} The assignment $X \mapsto \Omega_*(X)$ is
covariant for projective maps and contravariant for smooth maps in
$\sV_G$. It is also contravariant for l.c.i. morphisms in $\sV_G$. 
Moreover, for a fiber diagram 
\[
\xymatrix@C.7pc{
X' \ar[r]^{g'} \ar[d]_{f'} & X \ar[d]^{f} \\
Y' \ar[r]_{g} & Y}
\]
in $\sV_G$ with $f$ projective and $g$ smooth, one has 
$g^* \circ f_* = {f'}_* \circ {g'}^* : \Omega^G_*(X) \to \Omega^G_*(Y')$.
\\
$(ii) \ Localization :$ For a $G$-scheme $X$ and a closed $G$-invariant
subscheme $Z \subset X$ with complement $U$, 
there is an exact sequence
\[
\Omega^G_*(Z) \to \Omega^G_*(X) \to \Omega^G_*(U) \to 0.
\]
$(iii) \ Homotopy :$  If $f : E \to X$ is a $G$-equivariant vector bundle,
then $f^*: \Omega^G_*(X) \xrightarrow{\cong} \Omega^G_*(E)$. \\
$(iv) \ Chern \ classes :$ For any $G$-equivariant vector bundle $E
\xrightarrow{f} X$ of rank $r$, there are equivariant Chern class operators
$c^G_m(E) : \Omega^G_*(X) \to \Omega^G_{*-m}(X)$ for $0 \le m \le r$ with
$c^G_0(E) = 1$. These Chern classes 
have same functoriality properties as in the non-equivariant case. Moreover, 
they satisfy the Whitney sum formula. \\
$(v) \ Free \ action :$ If $G$ acts freely on $X$ with quotient $Y$, then
$\Omega^G_*(X) \xrightarrow{\cong} \Omega_*(Y)$. \\
$(vi) \ Exterior \ Product :$ There is a natural product map
\[
\Omega^G_i(X) \otimes_{\Z} \Omega^G_{i'}(X') \to \Omega^G_{i+i'}(X \times X').
\]
In particular, $\Omega^G_*(k)$ is a graded $\bL$-algebra and $\Omega^G_*(X)$ is
a graded $\Omega^G_*(k)$-module for every $X \in \sV_G$. 
For $X$ smooth, the pull-back via the diagonal $X \inj X \times X$ turns
$\Omega^*_G(X)$ into an $S(G)$-algebra. \\
$(vii) \ Projection \ formula :$ For a projective map $f : X' \to X$ in
$\sV^S_G$, one has for $x \in \Omega^G_*(X)$ and $x' \in \Omega^G_*(X')$,
the formula : $f_*\left(x' \cdot f^*(x)\right) = f_*(x') \cdot x$. \\
\end{thm}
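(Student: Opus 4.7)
Since Theorem~\ref{thm:Basic} is a compendium of structural properties, my plan is to reduce each statement to the corresponding non-equivariant property established by Levine--Morel \cite{LM}, applied to the mixed quotients $X_G := X \stackrel{G}{\times} U_j$ for a well-chosen sequence of good pairs $(V_j, U_j)$ as provided by Theorem~\ref{thm:NO-Niveu}, and then pass to the inverse limit. The key geometric observation is that if $f : X \to Y$ is a morphism in $\sV_G$, then $f_G : X_G \to Y_G$ inherits the geometric properties of $f$: it is projective, smooth, or l.c.i.\ whenever $f$ is, because $X_G \to Y_G$ is the base change of $f$ along the smooth surjection $Y \times U_j \to Y_G$, modulo the free diagonal $G$-action.

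\textbf{Step-by-step.} For (i), projective pushforward and smooth (or l.c.i.) pullback are defined level-wise by $(f_G)_*$ and $f_G^*$ on ordinary cobordism; one checks these respect (or shift by the relative dimension) the niveau filtration, hence descend to the quotients $\Omega^G_*(-)_j$ and assemble into maps of inverse systems. The base change identity $g^* f_* = f'_* g'^*$ then follows from its non-equivariant counterpart at each level. For (ii), the non-equivariant localization sequence applied to $Z_G \subset X_G$ with open complement $U_G$ is right exact at every level; compatibility of the niveau filtration with restriction and pushforward yields a right exact sequence on the quotients, and the surjectivity of the transition maps (Mittag--Leffler) preserves right exactness in the inverse limit. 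Property (iii) is immediate once one notes that $E_G \to X_G$ is an ordinary vector bundle, to which the non-equivariant homotopy invariance applies.

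For (iv), equivariant Chern classes are defined by letting $c^G_m(E)$ act at level $j$ as $c_m(E_G)$ on $\Omega_*(X_G)$, checking that these operators shift the niveau filtration by $m$ and hence descend to $\Omega^G_*(X)_j$; functoriality and the Whitney sum formula are inherited from \cite{LM}. Property (v) is essentially the definition: if $G$ acts freely on $X$ with quotient $Y$, then $X_G \to Y$ is a vector bundle for each good pair, and homotopy invariance identifies the limit with $\Omega_*(Y)$. Property (vi) follows from the external product of \cite{LM} applied to the product $X_G \times_k X'_G$ after choosing compatible good pairs; the ring structure on $\Omega^*_G(X)$ for smooth $X$ arises by pulling back along the diagonal $X \inj X \times X$. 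Finally, (vii) reduces immediately to the non-equivariant projection formula for $(f_G)_*$ and $f_G^*$ at each level, together with passage to the limit.

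\textbf{Main obstacle.} The principal technical difficulty is the consistent handling of the niveau filtration under the various operations and under the inverse limit. One must verify that pushforward, pullback, Chern classes, and exterior products each shift $F_\bullet$ by the expected amount so that the induced maps on $\Omega^G_*(X)_j$ are well-defined, functorial, and compatible with the transition maps in $j$. A related subtlety is the localization sequence: although each finite-level sequence is only right exact, the surjectivity of the transition maps $\Omega^G_*(X)_{j+1} \surj \Omega^G_*(X)_j$ is needed to preserve the rightmost surjection in the limit. Both points are the substance of \cite[\S 5--6]{Krishna4}, while the rest of the theorem assembles from standard Borel-style limit arguments.
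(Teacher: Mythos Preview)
The paper does not give its own proof of Theorem~\ref{thm:Basic}; it is stated as a recollection, with an explicit citation to \cite[Theorems~5.1, 5.4]{Krishna4} where the proofs are carried out. Your sketch correctly reconstructs the strategy used there: each property is verified on the mixed quotients $X \stackrel{G}{\times} U_j$ using the corresponding non-equivariant result from \cite{LM}, one checks compatibility with the niveau filtration so the maps descend to $\Omega^G_*(X)_j$, and then one passes to the inverse limit (invoking Theorem~\ref{thm:NO-Niveu} or the Mittag--Leffler condition where needed, notably for localization).

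One small imprecision: in your treatment of $(v)$, the map $X \stackrel{G}{\times} U_j \to Y$ is not a vector bundle but a fiber bundle with fiber $U_j$; the actual argument passes through the genuine vector bundle $X \stackrel{G}{\times} V_j \to Y$ and uses that the complement of $U_j$ in $V_j$ has large codimension, so that after quotienting by the niveau filtration the two agree. Apart from this, your outline matches the approach of \cite{Krishna4}.
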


\subsection{Formal group law}\label{subsection:FGL*}
Let $T$ be a split torus of rank $n$ acting on a smooth variety $X$.
Since $X$ is smooth, the commutative sub-$\bL$-algebra (under composition) of 
${\rm End}_{\bL}\left(\Omega^*_T(X)\right)$ generated
by the Chern classes of the vector bundles is canonically identified 
with a sub-$\bL$-algebra of the cobordism ring $\Omega^*_T(X)$ via the
identification $c^T_i(E) \mapsto \wt{c^T_i(E)} =
c^T_i(E)\left([X \xrightarrow{id} X]\right)$.
We shall denote this image also by $c^T_i(E)$ or, by $c^T_i$ if the underlying 
vector bundle is understood. In this paper, we shall view the (equivariant) 
Chern classes as elements of the (equivariant) cobordism ring of a smooth 
variety in this sense.

We recall from \cite{Krishna4} that the first Chern class of the tensor 
product of two equivariant line bundles on $X$ satisfies the formal group law 
of the ordinary cobordism. That is, for $L_1, L_2 \in {\rm Pic}^T(X)$, one has
\begin{equation}\label{eqn:FGL}
c^T_1(L_1 \otimes L_2) = F\left(c^T_1(L_1), c^T_1(L_2)\right),
\end{equation}
where $F(u, v) = u+v + uv{\underset{i,j \ge 1}\sum} \ a_{i,j}u^{i-1}v^{j-1},
\ a_{i,j} \in \bL_{1-i-j}$ is the graded power series in the graded power
series ring $\bL[[u,v]]$ which defines the universal (commutative) formal 
group law on $\bL$. It is known that even though $c^T_1(L)$ is not nilpotent 
in $\Omega^*_T(X)$ for $L \in {\rm Pic}^T(X)$ (unlike in the ordinary case), 
$F\left(c^T_1(L_1), c^T_1(L_2)\right)$ is a well defined element of 
$\Omega^1_T(X)$. In particular, there is a map of pointed sets
\begin{equation}\label{eqn:Chern-map}
\Pic^T(X) \to \Omega^1_T(X), \ \ L \mapsto c^T_1(L)
\end{equation}
such that $c^T_1(L_1 \otimes L_1) = F\left(c^T_1(L_1), c^T_1(L_2)\right)$.

One also knows that the formal group law $F(u,v)$ has
the following properties. \\
$(i)$ \ $F(u,0) = F(0,u) = u$ \\
$(ii)$ \ $F(u,v) = F(v,u)$ \\
$(iii)$ \ $F\left(u, F(v,w)\right) = F\left(F(u,v), w\right)$, and \\
$(iv)$ \ There exists (unique) $\rho(u) \in \bL[[u]]$ such that 
$F\left(u, \rho(u)\right) = 0$. \\
We write $F(u,v)$ as $u+_F v$. We shall denote $\rho(u)$ by $[-1]_Fu$.
We often write $F(u,v)$ as $u+_F v$. Inductively, we have 
$[n]_Fu = [n-1]_Fu +_F u$ if $n \ge 1$ and $[n]_Fu =
[-n]_F\rho(u)$ if $n \le 0$. The sum $\stackrel{m}{\underset{i =1}\sum}
[n_i]_Fu_i$ will mean $[n_1]_Fu_1 +_F \cdots +_F [n_m]_Fu_m$ for $n_i \in \Z$.

\subsection{Cobordism ring of classifying spaces}
Let $A = {\underset{j \in \Z}\oplus} A_j$ be a commutative graded
$R$-algebra with $R \subset A_0$ and $d \ge 0$. 
Let $S^{(n)} = {\underset{i \in \Z} \oplus} S_i$ be the graded ring such 
that $S_i$ is the set of formal power series in variables ${\bf t} =
\left(t_1, \cdots , t_n\right)$ of the form $f({\bf t}) = 
{\underset{m({\bf t}) \in \sC} \sum} a_{m({\bf t})}  m({\bf t})$,
where $a_{m({\bf t})}$ is a homogeneous element in $A$ of degree $|a_{m({\bf t})}|$
such that $|a_{m({\bf t})}| + |m({\bf t})| = i$. Here, $\sC$ is the set of all 
monomials in ${\bf t} = (t_1, \cdots , t_n)$ and 
$|m({\bf t})| = i_1 + \cdots + i_n$ if 
$m({\bf t}) = t^{i_1}_1 \cdots t^{i_n}_n$. 
One often writes this graded power series ring as ${A[[{\bf t}]]}_{\rm gr}$
to distinguish it from the usual formal power series ring $A[[{\bf t}]]$.

Notice that if $A$ is only non-negatively, then $S^{(n)}$ is nothing but the 
standard polynomial ring $A[t_1, \cdots , t_n]$ over $A$. It is also easy to 
see that $S^{(n)}$ is indeed a graded ring which is a subring of the formal 
power series ring $A[[t_1, \cdots , t_n]]$. The following result summarizes 
some basic properties of these rings. The proof is straightforward and is left 
as an exercise.

\begin{lem}\label{lem:GPSR}
$(i)$ There are inclusions of rings $A[t_1, \cdots , t_n] \subset S^{(n)} \subset
A[[t_1, \cdots , t_n]]$, where the first is an inclusion of graded rings. \\
$(ii)$ These inclusions are analytic isomorphisms with respect to the
${\bf t}$-adic topology. In particular, the induced maps of the associated
graded rings
\[
A[t_1, \cdots , t_n] \to {\rm Gr}_{({\bf t})} S^n \to 
{\rm Gr}_{({\bf t})} A[[t_1, \cdots , t_n]]
\]
are isomorphisms. \\
$(iii)$ ${S^{(n-1)}[[t_i]]}_{\rm gr} \xrightarrow{\cong} S^{(n)}$. \\
$(iv)$ $\frac{S^{(n)}}{(t_{i_1}, \cdots , t_{i_r})} \xrightarrow{\cong} S^{(n-r)}$ 
for any $n \ge r \ge 1$, where $S^{(0)} = A$. \\ 
$(v)$ The sequence $\{t_1, \cdots , t_n\}$ is a regular sequence in $S^{(n)}$.
\\
$(vi)$ If $A = R[x_1, x_2, \cdots ]$ is a polynomial ring with
$|x_i| < 0$ and ${\underset{i \to \infty}{\rm lim}} \ |x_i| = - \infty$, then
$S^{(n)} \xrightarrow{\cong}
{\underset{i}\varprojlim} \ {A[x_1, \cdots , x_i][[{\bf t}]]}_{\rm gr}$.
\end{lem}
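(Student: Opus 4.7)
The plan is to handle each part by direct unwinding of the definition of $S^{(n)}$, since the content of the lemma is essentially bookkeeping for graded power series. The organizing principle throughout is that an element of $S^{(n)}_i$ is a formal sum $\sum a_{m(\mathbf{t})} m(\mathbf{t})$ where the grading constraint $|a_{m(\mathbf{t})}| + |m(\mathbf{t})| = i$ pins down the coefficient degree as a function of monomial degree. This single observation drives everything.

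For (i), I would note that a polynomial in $A[t_1,\ldots,t_n]$ is a finite $A$-linear combination of monomials and so manifestly lies in $S^{(n)}$; grading compatibility is clear since the defining condition $|a_{m(\mathbf{t})}| + |m(\mathbf{t})| = i$ is preserved under polynomial multiplication. The inclusion $S^{(n)} \subset A[[\mathbf{t}]]$ is immediate from the definition upon forgetting the grading. For (ii), I would fix $d\ge 0$ and observe that reducing any $f \in S^{(n)}$ modulo $(\mathbf{t})^{d+1}$ kills all but finitely many monomials (those with $|m(\mathbf{t})| \le d$), so the reduction lies in $A[t_1,\ldots,t_n]/(\mathbf{t})^{d+1}$; this yields identifications
\[
A[t_1,\ldots,t_n]/(\mathbf{t})^{d+1} \xrightarrow{\cong} S^{(n)}/(\mathbf{t})^{d+1} \xrightarrow{\cong} A[[\mathbf{t}]]/(\mathbf{t})^{d+1},
\]
and passing to associated gradeds gives the claim. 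For (iii), I would send a graded power series $\sum b_{m'(\mathbf{t}')} t_i^{k}$ in $S^{(n-1)}[[t_i]]_{\rm gr}$ (with $\mathbf{t}'$ the remaining variables and $b_{m'(\mathbf{t}')} \in S^{(n-1)}$) to the evident element of $S^{(n)}$ obtained by expanding each $b_{m'(\mathbf{t}')}$; the inverse sorts a series in $S^{(n)}$ by the power of $t_i$. For (iv), the surjection $S^{(n)} \to S^{(n-r)}$ setting $t_{i_1},\ldots,t_{i_r}$ to zero has kernel exactly $(t_{i_1},\ldots,t_{i_r})$, which is straightforward to verify on homogeneous elements using (iii) inductively.

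For (v), I would argue inductively: using (iv), it suffices to show each $t_i$ is a nonzerodivisor in $S^{(n-i+1)}$. Since $S^{(n-i+1)} \hookrightarrow A[[t_i,\ldots,t_n]]$ by (i), and multiplication by $t_i$ in the latter is patently injective (it shifts coefficients of monomials), the regularity follows. For (vi), I would use the hypothesis that $|x_j| \to -\infty$ to observe that for each fixed homogeneous degree $d$ and each monomial $m(\mathbf{t})$, the component $A_{d - |m(\mathbf{t})|}$ is built from $x_j$'s with $|x_j| \ge d - |m(\mathbf{t})|$, which is a finite set; this lets me construct the natural truncation maps $S^{(n)} \to A[x_1,\ldots,x_i][[\mathbf{t}]]_{\rm gr}$ and then show the inverse system has $S^{(n)}$ as limit by matching coefficients monomial-by-monomial.

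The steps that look most delicate are (ii), where one must check that the $\mathbf{t}$-adic completion really is unaffected by the enlargement from polynomials to graded power series, and (vi), where the combinatorics of how coefficients of bounded degree can spread across infinitely many $x_j$ has to be controlled. I expect (vi) to be the main obstacle in the sense of requiring a careful choice of what "compatible" means on both sides; once the bounded-degree-per-coefficient observation is in place, the remaining work is formal. All other parts reduce to inspecting the defining data and should go through without incident.
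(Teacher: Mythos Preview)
Your proposal is correct. The paper itself gives no proof of this lemma, stating only that ``the proof is straightforward and is left as an exercise,'' so there is nothing to compare against; your direct unwinding of the definition of $S^{(n)}$ degree by degree is exactly what the authors had in mind, and the two places you flag as delicate (the compatibility of $(\mathbf{t})$-adic truncations in (ii) and the eventual stabilization of coefficients in (vi) via $|x_j|\to -\infty$) are the only points requiring any care.
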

 
Since we shall mostly be dealing with the graded power series rings in this 
text, we make the convention of writing ${A[[{\bf t}]]}_{\rm gr}$ as 
$A[[{\bf t}]]$, while the standard formal
power series ring will be written as $\widehat{A[[{\bf t}]]}$. We shall also 
write $S^{(n)}$ simply as $S$ when the number of ${\bf t}$-parameters is fixed. 

It is known \cite[Proposition~6.5]{Krishna4} that if $T$ is a split torus of 
rank $n$ and if $\{\chi_1, \cdots , \chi_n \}$ is a chosen basis 
of  the character group $\widehat{T}$, then there is a canonical isomorphism of
graded rings 
\begin{equation}\label{eqn:CBT*}
\bL[[t_1, \cdots , t_n]] \xrightarrow{\cong} \Omega^*(BT), \ \ 
t_i \mapsto c^T_1(L_{\chi_i}).
\end{equation}
Here, $L_{\chi}$ is the $T$-equivariant line bundle on ${\rm Spec}(k)$
corresponding to the character $\chi$ of $T$.
We shall write $S(T) = \Omega^*(BT)$ simply as $S$ in this text if the 
underlying torus is fixed.
One also has isomorphisms 
\begin{equation}\label{eqn:BT2}
\Omega^*(BGL_n) \xrightarrow{\cong} \bL[[\gamma_1, \cdots , \gamma_n]] \ \ 
{\rm and} \ \
\Omega^*(BSL_n) \xrightarrow{\cong} \bL[[\gamma_2, \cdots , \gamma_n]]
\end{equation}
of graded $\bL$-algebras, where $\gamma_i$'s  are the elementary symmetric
polynomials in $t_1, \cdots , t_n$ that occur in $\Omega^*(BT)$.

\subsection{Comparison with equivariant Chow groups}
In this paper, we fix the following notation for the tensor product
while dealing with inverse systems of modules over a commutative ring.
Let $A$ be a commutative ring with unit and let $\{L_n\}$ and $\{M_n\}$
be two inverse systems of $A$-modules with inverse limits $L$ and $M$
respectively. Following \cite{Totaro1}, one defines the 
{\sl topological tensor product} of $L$ and $M$ by
\begin{equation}\label{eqn:TTP}
L \widehat{\otimes}_A M : = 
{\underset{n}\varprojlim} \ (L_n {\otimes}_A M_n).
\end{equation} 
In particular, if $D$ is an integral domain with quotient field $F$ and if
$\{A_n\}$ is an inverse system of $D$-modules with inverse limit $A$, one 
has $A \widehat{\otimes}_{D} F = 
{\underset{n}\varprojlim} \ (A_n {\otimes}_{D} F)$.
The examples $\widehat{\Z_{(p)}} = {\underset{n}\varprojlim} \ {\Z}/{p^n}$ and 
$\Z[[x]] \otimes_{\Z} \Q \to {\underset{n}\varprojlim} \ 
\frac{\Z[x]}{(x^n)} {\otimes}_{\Z} \Q = \Q[[x]]$ show that the map  
$A \otimes_D F \to A \widehat{\otimes}_{D} F$ is in
general neither injective nor surjective. 

If $R$ is a $\Z$-graded ring and if $M$ and $N$ are two $R$-graded modules, 
then recall that $M\otimes_R N$ is also a graded $R$-module given by the 
quotient of $M \otimes_{R_0} N$ by the graded submodule generated by the 
homogeneous elements of the type $ax \otimes y - x \otimes ay$ where $a, x$ 
and $y$ are the homogeneous elements of $R$, $M$ and $N$ respectively.
If all the graded pieces $M_i$ and $N_i$ are the limits of inverse systems
$\{M^{\lambda}_i\}$ and $\{N^{\lambda}_i\}$ of $R_0$-modules, we define the
{\sl graded topological tensor product} as 
$M\widehat{\otimes}_R N = {\underset{i \in \Z}\bigoplus} 
\left(M\widehat{\otimes}_R N\right)_i$, where
\begin{equation}\label{eqn:TTP}
\left(M\widehat{\otimes}_R N\right)_i =
{\underset{\lambda}\varprojlim}
\left({\underset{j + j' = i}\bigoplus} \ 
\frac{M^{\lambda}_j \otimes_{R_0} N^{\lambda}_{j'}}
{\left(ax \otimes y - x \otimes ay\right)}\right).
\end{equation}
Notice that this reduces to the ordinary tensor product of graded $R$-modules
if the underlying inverse systems are trivial. 

It is known that there is a natural map $\Phi_X :\Omega^G_*(X) \to CH^G_*(X)$
of graded $\bL$-modules, where $CH^G_*(X)$ are the $G$-equivariant Chow
groups of $X$ defined by Totaro \cite{Totaro1} and Edidin-Graham \cite{EG}. 
The above map is in fact a morphism of $S(G)$-modules, which is a ring
homomorphism if $X$ is smooth. The following is the analogue of the 
corresponding result of Levine and Morel \cite{LM} for the equivariant
cobordism.
\begin{thm}\label{thm:CBCH}$(${\sl cf.} \cite[Proposition~7.1]{Krishna4}$)$
The map $\Phi_X$ induces an isomorphism of graded $\bL$-modules
\[
\Phi_X : \Omega^G_*(X) {\widehat{\otimes}}_{\bL} \Z \xrightarrow{\cong} CH^G_*(X).
\]
This is a ring isomorphism if $X$ is smooth.
\end{thm}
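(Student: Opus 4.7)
The plan is to reduce Theorem~\ref{thm:CBCH} to the non-equivariant comparison theorem of Levine--Morel \cite{LM}, applied at each finite approximation, and then pass carefully to the inverse limit using the definition of the graded topological tensor product.

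First I would fix a sequence of good pairs $\{(V_j, U_j)\}_{j \ge 0}$ as in Theorem~\ref{thm:NO-Niveu} and write $Y_j = X \stackrel{G}{\times} U_j$. For each $j$, Levine--Morel provides a natural isomorphism $\Omega_*(Y_j) \otimes_\bL \Z \xrightarrow{\cong} CH_*(Y_j)$ of graded $\bL$-modules, compatible with projective pushforward, smooth pullback, and (when $Y_j$ is smooth) with the ring structure. The niveau filtrations $F_\bullet \Omega_*(Y_j)$ and $F_\bullet CH_*(Y_j)$ are both defined as images of projective pushforwards from closed subschemes of bounded dimension, so the Levine--Morel isomorphism respects them. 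Quotienting by $F_{d + l_j - g - j}$ on both sides as in \eqref{eqn:E-cob*}, I obtain for each $j$ a natural isomorphism $\Omega^G_*(X)_j \otimes_\bL \Z \xrightarrow{\cong} CH^G_*(X)_j$.

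Next I would take the inverse limit over $j$. By the definition of the graded topological tensor product in \eqref{eqn:TTP}, the graded piece $(\Omega^G_*(X) \widehat{\otimes}_\bL \Z)_i$ is precisely $\varprojlim_j (\Omega^G_i(X)_j \otimes_\bL \Z)$. On the Chow side, $CH^G_*(X)$ is defined as $\varprojlim_j CH^G_*(X)_j$ by Edidin--Graham \cite{EG}, and in each fixed codimension this system stabilizes once $j$ is sufficiently large. The level-$j$ isomorphisms therefore assemble to the desired $\Phi_X$ in the limit. For smooth $X$, the ring structure is built from the exterior product of Theorem~\ref{thm:Basic}(vi) and pullback along the diagonal $X \inj X \times X$, each of which is compatible with $\Phi$ at every finite level, so $\Phi_X$ additionally becomes a ring homomorphism and hence a ring isomorphism.

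The main technical obstacle is justifying that $- \otimes_\bL \Z$ commutes with the inverse limit defining $\Omega^G_*(X)$; since $\Z$ is not flat over $\bL$, one might in principle fear $\varprojlim^1$ contributions spoiling the identification. This is precisely what the topological tensor product notation is designed to absorb. By working one graded piece at a time and exploiting the fact that the transition maps $\Omega^G_i(X)_{j+1} \surj \Omega^G_i(X)_j$ stabilize in each fixed codimension once $j$ is sufficiently large (the relevant niveau filtration pieces in $Y_j$ eventually vanish in that codimension as $\dim Y_j$ grows with $j$), the inverse limit in each degree reduces to an eventually constant system. Tensor products commute with stabilized limits, no derived functors intervene, and the level-$j$ isomorphisms yield the claimed isomorphism in the limit.
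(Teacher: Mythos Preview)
The paper does not actually prove this theorem here; it simply records it with a citation to \cite[Proposition~7.1]{Krishna4}. Your first two paragraphs give a correct outline of exactly the argument one expects: apply the Levine--Morel isomorphism $\Omega_*(Y_j)\otimes_{\bL}\Z\cong CH_*(Y_j)$ at each finite level, note that it respects the niveau filtration by naturality for projective push-forward (so the quotients defining $\Omega^G_*(X)_j$ and $CH^G_*(X)_j$ match after $-\otimes_{\bL}\Z$), and then take the inverse limit over~$j$. Since by the definition in~\eqref{eqn:TTP} the left-hand side of the limit is precisely $(\Omega^G_*(X)\widehat{\otimes}_{\bL}\Z)_i$ and the right-hand side is $CH^G_i(X)$, this already finishes the argument; the ring statement follows because the Levine--Morel map is compatible with exterior products and l.c.i.\ pull-back at every level.

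Your third paragraph, however, contains a false assertion which is fortunately also unnecessary. You claim that the transition maps $\Omega^G_i(X)_{j+1}\surj\Omega^G_i(X)_j$ eventually stabilize in each fixed degree. They do not: already for $G=T$ a split torus acting trivially on $X=\Spec k$, the degree-$i$ piece of $\Omega^*_T(k)\cong\bL[[t_1,\dots,t_n]]$ is an infinite sum of $\bL$-pieces, and the approximations $\Omega^G_i(k)_j$ keep acquiring new summands as $j$ grows (cf.~\eqref{eqn:CBT*}). What \emph{is} true, and is all you need, is that the Chow side stabilizes, by Edidin--Graham's double-fibration argument. The level-$j$ isomorphisms then give
\[
\bigl(\Omega^G_*(X)\widehat{\otimes}_{\bL}\Z\bigr)_i \;=\; \varprojlim_j\bigl(\Omega^G_i(X)_j\otimes_{\bL}\Z\bigr)\;\xrightarrow{\ \cong\ }\;\varprojlim_j CH^G_i(X)_j \;=\; CH^G_i(X)
\]
directly, with no need for stabilization on the cobordism side and no $\varprojlim^1$ considerations. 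In short: delete the last paragraph and your argument is fine.
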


\section{Preliminary results}\label{section:PRELIMS}
In this section, we collect some preliminary results about the intersection
theory of the cobordism cycles and their equivariant analogues. We also recall
the notion of cohomological rigidity for a regular embedding of $G$-schemes
and its consequence for the structure of the equivariant cobordism ring
of smooth $G$-schemes.
 
\begin{lem}\label{lem:torind}
Let
\begin{equation}\label{eqn:Tor-ind1}
\xymatrix@C.9pc{
W \ar[r]^{g'} \ar[d]_{f'} & Z \ar[d]^{f} \\
Y \ar[r]_{g} & X}
\end{equation}
be a Cartesian square of closed immersions in $\sV^S_k$ such that $f$ and $g$
are transverse to each other. Then $g^* \circ f_* = f'_* \circ {g'}^*:
\Omega^*(Z) \to \Omega^*(Y)$.
\end{lem}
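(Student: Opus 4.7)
The strategy is to reduce the claim to the base-change (``Tor-independence'') formula for algebraic cobordism from Levine--Morel.

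First, since $X$, $Y$, $Z \in \sV^S_k$ are smooth and $f$, $g$ are closed immersions, both $f$ and $g$ are regular embeddings, hence l.c.i.\ morphisms. By Theorem~\ref{thm:Basic}(i), the pullbacks $f^*$, $g^*$, $(f')^*$, $(g')^*$ are all defined on $\Omega^*(-)$ as l.c.i.\ pullbacks, and the four morphisms in \eqref{eqn:Tor-ind1} fit into the expected functorial framework. In particular, the Cartesian square is a diagram of regular embeddings between smooth schemes.

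Second, I would unpack the transversality hypothesis. Transversality of $f$ and $g$ means that at every closed point $w \in W$ the tangent spaces satisfy $T_wY + T_wZ = T_{g(f'(w))}X$. This forces $W$ to be smooth of codimension $\codim(Y/X) + \codim(Z/X)$ in $X$, equivalently, the Cartesian square is Tor-independent:
\[
\mathcal{T}or_i^{\sO_X}(\sO_Y,\sO_Z) \;=\; 0 \quad \text{for all } i \geq 1.
\]
Moreover, under transversality the excess normal bundle of the square vanishes: there is a canonical identification $(g')^* N_{Z/X} \cong N_{W/Y}$ and $(f')^* N_{Y/X} \cong N_{W/Z}$.

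Third, with these preparations in hand, I would invoke the base change formula of Levine--Morel for l.c.i.\ pullback along a Tor-independent Cartesian square (see \cite{LM}, in the spirit of the analogous statement for Chow groups in Fulton's intersection theory). This applies directly to our square and yields $g^* \circ f_* = f'_* \circ (g')^*$. If one wished for a more hands-on argument, one could instead use the representation \eqref{eqn:LP*} of $\Omega^*(Z)$ by classes $[T \xrightarrow{p} Z]$ with $T$ smooth and $p$ projective, and compute both sides via the deformation to the normal cone: transversality guarantees that the two iterated operations produce the same refined class on the smooth fiber product $W \times_Z T$, with no excess contribution.

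The main obstacle is precisely verifying that the transversality hypothesis is strong enough to trigger the vanishing of the excess intersection class in \emph{cobordism} (and not just in Chow theory): one must check that the deformation-to-the-normal-cone construction for l.c.i.\ pullback in the sense of \cite{LM} behaves compatibly with projective pushforward under Tor-independence. Once that compatibility is established (either by citation to \cite{LM} or by a direct cycle-level computation using $(g')^* N_{Z/X} \cong N_{W/Y}$), the equality $g^* \circ f_* = f'_* \circ (g')^*$ is immediate.
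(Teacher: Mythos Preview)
Your approach via the general Tor-independent base-change/excess intersection formalism is correct in principle, but the paper takes a more elementary and self-contained route. Rather than invoking a black-box base-change theorem, the paper argues directly on generators: by the Levine--Pandharipande presentation~\eqref{eqn:LP*}, it suffices to check the identity on a class $\alpha = [Z' \xrightarrow{p} Z]$ with $Z'$ smooth irreducible and $p$ projective. The key move---which your hands-on sketch omits---is to invoke the moving lemma \cite[Lemma~7.1]{LM1} to replace $\alpha$ by a representative with $p$ \emph{transverse} to $g'$ (hence $f \circ p$ transverse to $g$). Once this transversality is arranged, both $g^* \circ f_*(\alpha)$ and $f'_* \circ (g')^*(\alpha)$ are computed \emph{directly from the definition} of l.c.i.\ pullback in the transverse case (cf.\ \cite[5.1.3]{LM}) as the class $[W' \to Y]$ with $W' = W \times_Z Z'$; no deformation to the normal cone or excess-bundle analysis is required at all.

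Your route has the advantage of fitting into a general framework (and would extend to non-transverse squares with an excess term), but it requires locating the precise compatibility statement in \cite{LM}, which you yourself flag as the ``main obstacle.'' The paper's route sidesteps this entirely by reducing to the situation where pullback is given by naive fiber product of cycles, so the identity becomes a tautology.
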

\begin{proof} 
Let $\alpha = [Z' \xrightarrow{p} Z]$ be a cobordism cycle on $Z$ such that
$Z'$ is smooth and irreducible and $p$ is projective. It suffices to show
that the result holds for $\alpha$ ({\sl cf.} ~\eqref{eqn:LP*}).
By \cite[Lemma~7.1]{LM1}, we can assume that $p$ is transverse to $g'$.
In particular, $f \circ p$ is transverse to $g$. Setting $W' = W \times_Z Z'$,
we see that $W'$ is smooth and there is a commutative diagram
\begin{equation}\label{eqn:Niv-PB1}
\xymatrix{
W' \ar[r]^{g''} \ar[d]_{p'} & Z' \ar[d]^{p} \\
W \ar[r]^{g'} \ar[d]_{f'} & Z \ar[d]^{f} \\
Y \ar[r]_{g} & X}
\end{equation}
where all the squares are Cartesian.
We now have 
\[
\begin{array}{lll}
g^* \circ f_*(\alpha) & = &  g^*\left([Z' \xrightarrow{f \circ p} X]\right) \\
& = & [W' \xrightarrow{f' \circ p'} Y] \\
& = & f'_*\left([W' \xrightarrow{p'} W]\right) \\
& = & f'_* \circ {g'}^*\left([Z' \xrightarrow{p} Z]\right) \\
& = & f'_* \circ {g'}^* (\alpha),
\end{array}
\]
where the second and the fourth equality hold by the transversality of
the vertical and the horizontal arrows in the bottom and top squares
({\sl cf.} \cite[5.1.3]{LM}). This completes the proof.
\end{proof}

\begin{cor}\label{cor:G-torind}
Let $G$ be a linear algebraic group over $k$ and let
\begin{equation}\label{eqn:Tor-ind1}
\xymatrix@C.9pc{
W \ar[r]^{g'} \ar[d]_{f'} & Z \ar[d]^{f} \\
Y \ar[r]_{g} & X}
\end{equation}
be a Cartesian square of closed immersions in $\sV^S_G$ such that $f$ and $g$
are transverse to each other. Then $g^* \circ f_* = f'_* \circ {g'}^*:
\Omega^*_G(Z) \to \Omega^*_G(Y)$.
\end{cor}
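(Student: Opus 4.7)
The plan is to bootstrap Lemma~\ref{lem:torind} to the equivariant setting via the mixing construction and then pass to the inverse limit using Theorem~\ref{thm:NO-Niveu}. First I would fix a sequence $\{(V_j,U_j)\}_{j\ge 0}$ of good pairs satisfying the hypotheses of Theorem~\ref{thm:NO-Niveu}, so that the equivariant cobordism groups in sight can be computed as honest inverse limits without the niveau-filtration quotient. For each $j$, I apply the mixing functor $A\mapsto A_G^j := A\stackrel{G}{\times} U_j$ to the square~\eqref{eqn:Tor-ind1}. Since $-\stackrel{G}{\times} U_j$ preserves fiber products and closed immersions, and sends smooth quasi-projective $G$-schemes to smooth quasi-projective $k$-schemes, the resulting square is a Cartesian square of closed immersions in $\sV^S_k$ with corners $W_G^j$, $Z_G^j$, $Y_G^j$, $X_G^j$.

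The crucial technical step is to verify that $f_G$ and $g_G$ remain transverse in $X_G^j$. Because $U_j\to U_j/G$ is a $G$-torsor, étale locally on $U_j/G$ the mixing space $A_G^j$ is identified with $A\times V$ for some $V$ étale over $U_j/G$, and under this identification the four closed immersions take the form $h\times \id_V$ for $h\in\{f,g,f',g'\}$. Since transversality is stable under étale base change and under product with a smooth factor, the transversality of $f$ and $g$ in $X$ transfers to transversality of $f_G$ and $g_G$ in $X_G^j$.

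With this in place, Lemma~\ref{lem:torind} applied at level $j$ yields
\[
g_G^{*}\circ (f_G)_{*} \;=\; (f'_G)_{*}\circ (g'_G)^{*}\colon \Omega^{*}(Z_G^j)\to \Omega^{*}(Y_G^j)
\]
for every $j$. These identities are compatible across the inverse system because projective push-forward and smooth pull-back are natural for the smooth transition maps between consecutive mixing spaces, and by construction the equivariant operations $f_{*}$, $f'_{*}$, $g^{*}$, $(g')^{*}$ are the inverse limits of their non-equivariant counterparts on the $A_G^j$'s. Invoking Theorem~\ref{thm:NO-Niveu} and passing to the limit over $j$ then delivers the desired equality in $\Omega^{*}_G(Y)$.

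The only genuine subtlety I anticipate is the preservation of transversality under the mixing construction, which is handled by the étale-local argument of the second paragraph; the remaining compatibilities with the inverse system are formal consequences of the definitions recalled in Section~\ref{section:AC}, since smooth pull-back and projective push-forward respect the niveau filtration and commute with the transition maps of the inverse system used to define $\Omega^{*}_G(-)$.
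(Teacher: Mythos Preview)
Your proposal is correct and follows essentially the same route as the paper: choose a sequence of good pairs as in Theorem~\ref{thm:NO-Niveu}, apply Lemma~\ref{lem:torind} to the mixed-quotient squares at each level $j$, and pass to the inverse limit. The paper simply asserts that the mixed-quotient square is Cartesian with transverse closed immersions, whereas you supply the \'etale-local justification; otherwise the arguments coincide.
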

\begin{proof} We choose a sequence $\{(V_j, U_j)\}_{j}$ of $l_j$-dimensional
good pairs as in Theorem~\ref{thm:NO-Niveu} and for any $F \in \sV^S_G$,
let $F_j$ denote the mixed quotient $F \stackrel{G}{\times} U_j$.
We then see that 
\[
\xymatrix@C.9pc{
W_j \ar[r]^{g'_j} \ar[d]_{f'_j} & Z_j \ar[d]^{f_j} \\
Y_j \ar[r]_{g_j} & X_j}
\]
is a Cartesian square of closed immersions in $\sV^S_k$ such that $f_j$ and $g_j$
are transverse to each other. Applying Lemma~\ref{lem:torind}, we get
\begin{equation}\label{eqn:G-TI}
g_j^* \circ {f_j}_* = {f'_j}_* \circ {g'_j}^*:
\Omega^*_G(Z_j) \to \Omega^*_G(Y_j) \  {\rm for \ each} \ j \ge 0.
\end{equation}
Since the push-forward and pull-back maps on equivariant cobordism groups are
defined by taking the limit over these maps on the sequence of
mixed quotients, taking the inverse 
limit over $j \ge 0$ in ~\eqref{eqn:G-TI} and applying 
 Theorem~\ref{thm:NO-Niveu}, we get our desired assertion.
\end{proof} 

\begin{lem}\label{lem:RSIF}
Let $G$ be as above and let $Y \xrightarrow{f} X$ be a closed immersion
in $\sV^S_G$. Then $f_* \circ f^*(\alpha) = \alpha f_*(1)$ in $\Omega^*_G(X)$.
\end{lem}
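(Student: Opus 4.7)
The plan is to derive this lemma as an immediate consequence of the equivariant projection formula stated in Theorem~\ref{thm:Basic}(vii). The first step is to check that the hypotheses of that projection formula are indeed in force. Since $f : Y \hookrightarrow X$ is a closed immersion of smooth schemes in $\sV^S_G$, it is automatically $G$-equivariantly projective, and because $Y$ and $X$ are both smooth the closed immersion $f$ is a regular embedding, hence l.c.i., so that $f^* : \Omega^*_G(X) \to \Omega^*_G(Y)$ is defined via Theorem~\ref{thm:Basic}(i).

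With these verifications in place, I would apply Theorem~\ref{thm:Basic}(vii) with $x' = 1 \in \Omega^0_G(Y)$ and $x = \alpha \in \Omega^*_G(X)$ to obtain
\[
f_*\bigl(1 \cdot f^*(\alpha)\bigr) \;=\; f_*(1) \cdot \alpha.
\]
Since $1$ is the multiplicative identity of the ring $\Omega^*_G(Y)$, the left-hand side simplifies to $f_*(f^*(\alpha))$; by graded-commutativity of $\Omega^*_G(X)$ (Theorem~\ref{thm:Basic}(vi)) the right-hand side equals $\alpha \cdot f_*(1)$. This yields the asserted identity.

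There is no genuine obstacle: the lemma is essentially a formal specialisation of the projection formula, chosen so that the first input is the unit class of $Y$. The only content of the argument is the verification that $f$ is simultaneously projective and l.c.i., so that both the push-forward $f_*$ and the pull-back $f^*$ are available, which is automatic for a closed immersion of smooth $G$-schemes. The identity $f_* f^*(\alpha) = \alpha \cdot f_*(1)$ produced here will then serve as the self-intersection formula needed for later intersection-theoretic manipulations of invariant divisors $V_\rho$ on toric varieties.
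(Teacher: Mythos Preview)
Your argument is correct: the lemma is indeed an immediate specialization of the projection formula in Theorem~\ref{thm:Basic}(vii) with $x' = 1$, and the paper itself explicitly acknowledges this at the start of its proof.

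However, the paper deliberately chooses a different, more hands-on route. Rather than invoking the projection formula as a black box, it first treats the non-equivariant case directly: one moves a cobordism cycle $\alpha = [Z \xrightarrow{g} X]$ into transverse position with $f$ (via \cite[Lemma~7.1]{LM1}), so that $f_*f^*(\alpha) = [Z \times_X Y \to X]$ is literally the intersection product $[Y \to X] \cdot [Z \to X]$ by the definition of the product on cobordism cycles. The equivariant statement is then obtained by passing to mixed quotients and taking the inverse limit, exactly as in Corollary~\ref{cor:G-torind}. What this buys is an argument that is independent of the general projection formula and makes the geometric content (transverse intersection) visible; your approach, by contrast, is shorter and entirely formal once Theorem~\ref{thm:Basic}(vii) is granted.
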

\begin{proof}
Although this follows from the more general projection formula of 
Theorem~\ref{thm:Basic}, we give a simple proof of special case of closed
immersion. We first prove this in the non-equivariant case. As in the proof of
Lemma~\ref{lem:torind}, we can assume that $\alpha = [Z \xrightarrow{g} X]$,
where $g$ is transverse to $f$. In that case, we have
\[
f_* \circ f^*(\alpha) = [Z \times_X Y \to X] = 
\left([Z \xrightarrow{f} X]\right) \cdot
[Z \xrightarrow{g} X],
\]
where the second equality holds by the transversality condition and the
definition of intersection of cobordism cycles ({\sl cf.}
\cite[Sections~5,6]{LM}). The equivariant case is reduced to the 
non-equivariant one exactly as in the proof of Corollary~\ref{cor:G-torind}.
\end{proof}

The following is the equivariant version of the self-intersection
formula for cobordism. We refer the reader to \cite[Proposition~3.1]{Krishna3}
for a proof.

\begin{prop}[Self-intersection formula]\label{prop:SIF}
Let $G$ be as above and let $Y \xrightarrow{f} X$ be a regular 
$G$-equivariant embedding in $\sV_G$ of 
pure codimension $d$ and let $N_{Y/X}$ denote the equivariant normal bundle of 
$Y$ inside $X$. Then one has for every $y \in \Omega^G_*(Y)$,
$f^* \circ f_*(y) = c^G_d(N_{Y/X}) \cdot y$.
\end{prop}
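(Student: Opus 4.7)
The plan is to reduce the equivariant statement to the non-equivariant self-intersection formula for algebraic cobordism by means of the mixed-quotient construction that underlies the definition of $\Omega^G_*$.

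First I would establish the non-equivariant case: if $f: Y \to X$ is a regular closed embedding of pure codimension $d$ in $\sV^S_k$, then $f^* \circ f_*(y) = c_d(N_{Y/X}) \cdot y$ for every $y \in \Omega_*(Y)$. The standard approach is deformation to the normal cone, which produces a flat family over $\P^1$ with generic fiber $X$ and special fiber containing the total space of $N_{Y/X}$ with $f$ specializing to the zero section $s: Y \to N_{Y/X}$. Using the double-point relation presentation of Levine--Pandharipande recalled in \eqref{eqn:LP*} together with homotopy invariance (Theorem~\ref{thm:Basic}(iii)) to identify $\pi^*: \Omega^*(Y) \xrightarrow{\cong} \Omega^*(N_{Y/X})$, the computation reduces to $s^* \circ s_*(y) = c_d(N_{Y/X}) \cdot y$, which is the content of the self-intersection formula of Levine--Morel. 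The key case $y = 1$ follows by identifying $s_*(1)$ with the Thom class of $N_{Y/X}$ and invoking the definition of the top Chern class.

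Next, for the equivariant statement, fix a sequence $\{(V_j, U_j)\}_{j \ge 0}$ of $l_j$-dimensional good pairs as in Theorem~\ref{thm:NO-Niveu} and form the mixed quotients $X_j = X \stackrel{G}{\times} U_j$ and $Y_j = Y \stackrel{G}{\times} U_j$. Since $U_j \to U_j/G$ is a principal $G$-bundle, the induced morphism $f_j: Y_j \to X_j$ is again a regular closed embedding of pure codimension $d$ in $\sV^S_k$, and its conormal sheaf is the descent of the equivariant conormal sheaf of $f$, so
\[
N_{Y_j/X_j} \;\cong\; N_{Y/X} \stackrel{G}{\times} U_j,
\]
which by construction represents $c^G_d(N_{Y/X})$ at level $j$. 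Applying the non-equivariant self-intersection formula on $Y_j$ yields
\[
f_j^* \circ (f_j)_*(y_j) \;=\; c_d\bigl(N_{Y_j/X_j}\bigr) \cdot y_j
\]
in $\Omega_*(Y_j)$ for the image $y_j$ of $y$.

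Finally, pushforward, pullback and Chern class operations on $\Omega^G_*$ are by definition compatible with the transition maps of the tower $\{\Omega_*(Y_j)\}_j$; passing to the inverse limit and using Theorem~\ref{thm:NO-Niveu} to identify it with $\Omega^G_*(Y)$, the left hand sides assemble to $f^* \circ f_*(y)$ and the right hand sides to $c^G_d(N_{Y/X}) \cdot y$. The main obstacle is really in the first step: the non-equivariant self-intersection formula for $\Omega^*$ is delicate because cobordism cycles are not presented by subvarieties and because the excess intersection computation on the normal cone must be carried out through the double-point relations rather than by cycle-theoretic intersection. Once that is in hand, the equivariant promotion is essentially formal, resting only on the preservation of regular embeddings, normal bundles and Chern classes under the smooth base change $U_j \to U_j/G$.
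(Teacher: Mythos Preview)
Your proposal is correct and follows the same pattern the paper uses elsewhere. The paper does not actually give a proof of this proposition: it simply refers the reader to \cite[Proposition~3.1]{Krishna3}. Your reduction---apply the non-equivariant self-intersection formula of Levine--Morel on each mixed quotient $Y_j \hookrightarrow X_j$, observe that $N_{Y_j/X_j}$ descends from the equivariant normal bundle, and pass to the inverse limit via Theorem~\ref{thm:NO-Niveu}---is exactly the mechanism used in the surrounding results (Corollary~\ref{cor:G-torind}, Lemma~\ref{lem:RSIF}) and is presumably what the cited reference does.

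One small imprecision: you state the non-equivariant case for $\sV^S_k$, but the proposition is formulated for $\sV_G$, so $X$ and $Y$ need not be smooth. The non-equivariant self-intersection formula in \cite{LM} is proved for arbitrary regular embeddings (the refined Gysin map $f^*$ exists for l.c.i.\ morphisms, as recalled in Theorem~\ref{thm:Basic}(i)), and the mixed quotients $Y_j \hookrightarrow X_j$ remain regular embeddings of codimension $d$ without any smoothness hypothesis. So your argument goes through unchanged once you drop the unnecessary smoothness restriction in step one; the deformation-to-the-normal-cone argument you sketch is valid in that generality.
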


\begin{lem}\label{lem:PROJ}
Let $G$ be as above and let $Y \in \sV^S_G$. Let $G$ act diagonally on $X =
Y \times \P^n_k$ by acting trivially on $\P^n_k$. There is a canonical
isomorphism 
\[
\stackrel{n}{\underset{p =0}\bigoplus} \ \Omega_G^{i-p}(Y) 
\to \Omega_G^i(X)
\]
\[
(a_0, \cdots , a_n) \mapsto \stackrel{n}{\underset{j =0}\sum}
{\pi}^*(a_p) \zeta^p
\]
where $\pi : X \to Y$ is the projection and $\zeta$ is the class of the 
tautological line bundle on $\P^n_k$. In particular, there is an
$S(G)$-algebra isomorphism
\[
\Omega^*_G(X) \xrightarrow{\cong} \Omega^*_G(Y) {\widehat{\otimes}}_{\bL} 
\Omega^*(\P^n_k).
\]
\end{lem}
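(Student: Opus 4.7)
The plan is to reduce the lemma to the ordinary projective bundle formula of Levine and Morel by working through the Borel construction level by level. Choose a sequence of good pairs $\{(V_j, U_j)\}_{j \ge 0}$ satisfying the hypotheses of Theorem~\ref{thm:NO-Niveu} and set $Y_j = Y \stackrel{G}{\times} U_j$. Since $G$ acts trivially on the $\P^n_k$-factor, the mixed quotient unwinds as $X \stackrel{G}{\times} U_j = Y_j \times \P^n_k$, and the projection $\pi : X \to Y$ descends to the trivial $\P^n_k$-bundle $\pi_j : Y_j \times \P^n_k \to Y_j$. The tautological line bundle on $\P^n_k$ carries a canonical (trivial) $G$-equivariant structure, so its first equivariant Chern class yields a well-defined element $\zeta \in \Omega^1_G(X)$ whose level-$j$ realization is the pullback of $c_1(\sO_{\P^n}(-1))$ along the projection $Y_j \times \P^n_k \to \P^n_k$.

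For each $j$, the non-equivariant projective bundle formula of \cite{LM} applied to $\pi_j$ yields an isomorphism
\[
\bigoplus_{p = 0}^n \Omega^{i - p}(Y_j) \xrightarrow{\cong} \Omega^i(Y_j \times \P^n_k), \qquad (a_0, \dots, a_n) \mapsto \sum_{p = 0}^n \pi_j^*(a_p) \, \zeta_j^p.
\]
These isomorphisms are natural in $j$: the transition maps of the system $\{(V_j, U_j)\}$ induce smooth morphisms of mixed quotients under which $\zeta_j$ restricts compatibly and $\pi_j^*$ commutes with the induced pullbacks, so the decompositions fit into a morphism of inverse systems. Because the direct sum over $p$ is finite, it commutes with $\varprojlim_j$, and Theorem~\ref{thm:NO-Niveu} then identifies the limits on both sides with the equivariant cobordism groups, producing the claimed isomorphism $\bigoplus_{p = 0}^n \Omega_G^{i-p}(Y) \xrightarrow{\cong} \Omega_G^i(X)$.

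For the $S(G)$-algebra statement, I would invoke the ordinary PBF applied to the structure map $\P^n_k \to \Spec k$, which presents $\Omega^*(\P^n_k)$ as a free $\bL$-module on $1, \zeta, \dots, \zeta^n$. Unwinding the definition of $\widehat{\otimes}_{\bL}$ from~\eqref{eqn:TTP}, the finiteness of this basis makes the graded topological tensor product collapse to a finite direct sum matching the decomposition above, and the multiplicative structure is visibly compatible since $\zeta$ is a common polynomial generator on both sides. The main technical subtlety is verifying the compatibility of the PBF isomorphisms with the transition maps in the Borel tower, but this follows at once from the smooth-pullback naturality of the Levine-Morel PBF together with the fact that $\zeta$ descends from the fixed space $\P^n_k$ and is therefore automatically coherent under the system.
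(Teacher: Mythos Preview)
Your argument is correct and follows exactly the paper's own approach: choose a tower of good pairs as in Theorem~\ref{thm:NO-Niveu}, identify each mixed quotient with $Y_j \times \P^n_k$, apply the non-equivariant projective bundle formula levelwise, and pass to the inverse limit. You have simply written out more of the details (compatibility in the tower, the finite-direct-sum/limit commutation, and the unwinding of the $\widehat{\otimes}_{\bL}$ statement) than the paper does.
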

\begin{proof}
We choose a sequence ${\{(V_j, U_j)\}}_j$ of good pairs for the $G$-action
as in Theorem~\ref{thm:NO-Niveu}. We then have
$X_j = X \stackrel{G}{\times} U_j = Y_j \times \P^n$ and hence the projective
bundle formula for the non-equivariant cobordism gives an isomorphism
\begin{equation}\label{eqn:PROJ1}
\stackrel{n}{\underset{p =0}\bigoplus} \ \Omega^{i-p}(Y_j) 
\to \Omega^i(X_j).
\end{equation}
Taking the inverse limit over $j \ge 0$ and using \cite[Theorem~6.1]{Krishna4},
we conclude the proof.
\end{proof} 

\subsection{Cohomological Rigidity in cobordism}\label{subsection:CRD}
Recall that for homomorphisms $A_i \xrightarrow{f_i} B, \ i= 1, 2$ of abelian 
groups, $A_1 {\underset{B}\times} A_2$ denotes 
the fiber product $\{(a_1, a_2)| f_1(a_1)$ \\
$ = f_2(a_2)\}$. 
Let $Y \inj X$ be a closed embedding in $\sV^S_G$ of codimension $d \ge 0$
and let $N_{Y/X}$ denote the normal bundle of $Y$ in $X$.
Following \cite{VV} and \cite{Krishna2}, we shall say that $Y$ is 
{\sl cohomologically rigid} inside $X$ if $c^G_d(N_{Y/X})$ is a non-zero 
divisor in the cobordism ring $\Omega^*_G(Y)$. The cohomological rigidity has 
the following important consequence. 

\begin{prop}\label{prop:split}
Let $Y \inj X$ be a closed embedding in $\sV^S_G$ such that $Y$ is 
cohomologically rigid inside $X$. Let $Y \stackrel{i}{\inj} X$ and
$U \stackrel{j}{\inj} X$ be the inclusion maps, where $U$ is the complement
of $Y$ in $X$. Then: \\
$(i)$ The localization sequence 
\[
0 \to \Omega^*_G(Y) \xrightarrow{i_*} \Omega^*_G(X) \xrightarrow{j^*}
\Omega^*_G(U) \to 0
\]
is exact. \\
$(ii)$ The restriction ring homomorphisms
\[
\Omega^*_G(X) \stackrel{(i^*, j^*)}{\longrightarrow} 
\Omega^*_G(Y) \times \Omega^*_T(U)
\]
give an isomorphism of rings
\[
\Omega^*_G(X) \xrightarrow{\cong} \Omega^*_G(Y) {\underset {\wt{\Omega^*_G(Y)}}
{\times}} \Omega^*_G(U),
\]
where $\wt{\Omega^*_G(Y)} = 
{\Omega^*_G(Y)}/{\left(c^G_d\left(N_{Y/X}\right)\right)}$,
and the maps 
\[
\Omega^*_G(Y) \to \wt{\Omega^*_G(Y)}, \ \Omega^*_G(U) \to \wt{\Omega^*_G(Y)}
\]
are respectively, the natural surjection and the map 
\[
\Omega^*_G(U) = {\frac{\Omega^*_G(X)}{i_*\left(\Omega^*_G(Y)\right)}}
\xrightarrow{i^*} {\frac{\Omega^*_G(Y)}{c^G_d\left(N_{Y/X}\right)}}  
= \wt{\Omega^*_G(Y)},
\]
which is well-defined by Proposition~\ref{prop:SIF}.
\end{prop}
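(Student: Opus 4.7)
My plan is to prove the two parts in sequence, using the self-intersection formula (Proposition~\ref{prop:SIF}) and the localization sequence (Theorem~\ref{thm:Basic}(ii)) as the main tools, with cohomological rigidity entering through the non-zero-divisor hypothesis on $c^G_d(N_{Y/X})$.

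\textbf{Part (i).} The localization sequence gives exactness at $\Omega^*_G(X)$ and surjectivity of $j^*$, so the only thing to prove is that $i_*$ is injective. Given $y \in \Omega^*_G(Y)$ with $i_*(y) = 0$, I would apply $i^*$ and invoke the self-intersection formula to get
\[
0 = i^* \circ i_*(y) = c^G_d(N_{Y/X}) \cdot y.
\]
Since $c^G_d(N_{Y/X})$ is a non-zero divisor in $\Omega^*_G(Y)$ by hypothesis, this forces $y = 0$.

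\textbf{Part (ii).} First I would verify that the asserted map $\Omega^*_G(U) \to \wt{\Omega^*_G(Y)}$ is well-defined: since $\Omega^*_G(U) = \Omega^*_G(X)/i_*\left(\Omega^*_G(Y)\right)$ by part (i), and $i^* \circ i_*(y) = c^G_d(N_{Y/X}) \cdot y$ by the self-intersection formula, the composition with the quotient $\Omega^*_G(Y) \twoheadrightarrow \wt{\Omega^*_G(Y)}$ kills $i_*(\Omega^*_G(Y))$, so $i^*$ descends to the claimed map. This also shows that $(i^*, j^*)$ lands in the fiber product.

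\textbf{Injectivity of the fiber product map.} If $(i^*(x), j^*(x)) = (0,0)$, then $j^*(x) = 0$ gives $x = i_*(y)$ for some unique $y$ by part (i), and then $i^*(x) = 0$ becomes $c^G_d(N_{Y/X}) \cdot y = 0$, forcing $y = 0$ and hence $x = 0$.

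\textbf{Surjectivity onto the fiber product.} Given $(y, u)$ with equal image in $\wt{\Omega^*_G(Y)}$, I would first use surjectivity of $j^*$ to choose $x_0 \in \Omega^*_G(X)$ with $j^*(x_0) = u$. The compatibility condition means $y - i^*(x_0) = c^G_d(N_{Y/X}) \cdot z$ for some $z \in \Omega^*_G(Y)$, which by the self-intersection formula equals $i^* \circ i_*(z)$. Setting $x := x_0 + i_*(z)$, one then checks $i^*(x) = i^*(x_0) + c^G_d(N_{Y/X}) \cdot z = y$, while $j^*(x) = j^*(x_0) + j^* \circ i_*(z) = u + 0 = u$, using $j^* \circ i_* = 0$ from the localization sequence. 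This produces the required preimage.

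There is no serious obstacle; the argument is a standard Mayer--Vietoris-type patching, and the only substantive input beyond formal manipulation is the self-intersection formula combined with the non-zero-divisor hypothesis. The mildly delicate point is just to check that the two maps into $\wt{\Omega^*_G(Y)}$ in the statement match so that the fiber product is the correct target, but this reduces to one application of Proposition~\ref{prop:SIF}.
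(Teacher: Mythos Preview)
Your proof is correct and is precisely the standard argument; the paper itself does not spell out a proof but simply cites \cite[Proposition~4.1]{Krishna3}, whose proof proceeds along exactly the lines you give (self-intersection formula for injectivity of $i_*$, then the Mayer--Vietoris-style patching for the fiber-product description).
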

\begin{proof}
{\sl Cf.} \cite[Proposition~4.1]{Krishna3}.
\end{proof}

\section{Stratification of toric varieties}\label{section:STRATA}
Recall that a split diagonalizable
group $G$ over $k$ is a commutative linear algebraic group whose identity
component is a split torus. 
Let $G$ be a split diagonalizable group acting on a smooth variety $X$.
Following the notations of \cite{VV}, for any $s \ge 0$, we let $X_{\le s}
\subset X$ be the open subset of points whose stabilizers have dimension
at most $s$. We shall often write $X_{\le s-1}$ also as $X_{< s}$.
Let $X_s = (X_{\le s} \setminus X_{< s})$ denote the locally closed subset of $X$,
where the stabilizers have dimension exactly $s$. We think of $X_s$
as a subspace of $X$ with the reduced induced structure. It is clear 
that $X_{\le s}$ and $X_s$ are $G$-invariant subspaces of $X$. Let $N_s$ 
denote the normal bundle of $X_s$ in $X_{\le s}$, and let $N_s^0$ denote the 
complement of the 0-section in $N_s$. Then $G$ clearly acts on $N_s$.
The following result of {\sl loc. cit.} describes some
properties of these subspaces.
\begin{prop}\label{prop:strata}
Let $s$ be non-zero integer. \\
$(i)$ There exists a finite number of $s$-dimensional subtori 
$T_1, \cdots , T_r$ in $G$ such that $X_s$ is the disjoint union of
the fixed point spaces $X^{T_j}_{\le s}$. \\
$(ii)$ $X_s$ is smooth locally closed subvariety of $X$. \\
$(iii) \ N^0_s = {\left(N_s\right)}_{< s}$. 
\end{prop}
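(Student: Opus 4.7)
The plan is to prove the three parts in order, using the diagonalizable structure of $G$ to reduce everything to statements about subtori of $G$ and their fixed loci on $X$.

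For part $(i)$, I begin with the observation that for any $x \in X_s$, the stabilizer $G_x$ is a closed subgroup of the diagonalizable group $G$; hence $G_x$ is itself diagonalizable and its identity component $G_x^0$ is an $s$-dimensional subtorus of $G$. In particular $x \in X^{G_x^0}_{\le s}$. To obtain the finite disjoint union, I would show that only finitely many such subtori arise as $x$ ranges over $X_s$. Using Sumihiro's theorem to cover $X$ by finitely many affine $G$-invariant open subsets and linearizing the action on each, the stabilizer $G_x^0$ of a point in $X_s$ is determined by the finite set of characters appearing in the weight decomposition of the local representation, giving a finite list of subtori $T_1, \dots, T_r$. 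The pairwise intersection $X^{T_i}_{\le s} \cap X^{T_j}_{\le s}$ for distinct $T_i, T_j$ of dimension $s$ must be empty, since a point there would be fixed by the larger subtorus $T_i \cdot T_j$ of dimension $>s$, contradicting that the stabilizer dimension equals $s$.

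For part $(ii)$, once the decomposition from $(i)$ is in place, I invoke the classical result (due to Iversen and Fogarty) that the fixed locus of a diagonalizable group action on a smooth variety in characteristic zero is smooth. Applied to each $T_j$, this says $X^{T_j}$ is smooth, so $X^{T_j}_{\le s} = X^{T_j} \cap X_{\le s}$ is smooth and locally closed in $X$; the disjoint union $X_s = \bigsqcup_j X^{T_j}_{\le s}$ is then smooth and locally closed as well.

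For part $(iii)$, I examine the $G$-action on $N_s$ fiber by fiber. Let $x \in X_s$ with $T = G_x^0$. The tangent space $T_x X$ decomposes as a sum of $T$-weight spaces, and by $(i)$ and $(ii)$ applied to $T$, the tangent space $T_x X_s$ coincides with the trivial weight space $(T_x X)^T$. Consequently the fiber $(N_s)_x = T_x X / T_x X_s$ contains no trivial $T$-weight, so any non-zero vector $v \in (N_s)_x$ has $T$-stabilizer of strictly smaller dimension than $T$, and hence $G$-stabilizer of dimension $<s$. This proves $N_s^0 \subseteq (N_s)_{<s}$. Conversely, each point of the zero section has $G$-stabilizer containing $G_x$, of dimension $\ge s$, so any point of $(N_s)_{<s}$ must be non-zero, i.e.\ must lie in $N_s^0$.

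The main obstacle I expect is the finiteness argument in part $(i)$: although the pointwise statement that $G_x^0$ is an $s$-dimensional subtorus is immediate, producing a genuinely finite list of subtori requires combining Sumihiro-type linearization with constructibility of the stabilizer group scheme, and carefully checking that the resulting strata are disjoint and locally closed. Parts $(ii)$ and $(iii)$ then follow routinely from the decomposition and standard facts about weight decompositions of torus representations on tangent spaces.
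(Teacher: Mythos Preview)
Your argument is essentially correct and self-contained, whereas the paper does not actually prove this proposition: it simply records that the statement is a special case of \cite[Proposition~2.2]{VV} and gives no further details. So the ``different route'' here is that you have supplied an honest proof sketch where the paper defers entirely to Vezzosi--Vistoli.

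In substance, what you have written is very close to the argument that appears in \cite{VV}. Your part~$(i)$ is the standard reduction via Sumihiro linearization to a finite list of isotropy subtori, together with the observation that $X^{T_j}_{\le s}$ is precisely the locus where $G_x^0 = T_j$, which forces disjointness; your part~$(ii)$ is the classical smoothness of fixed loci for diagonalizable groups in characteristic zero; and your part~$(iii)$ is the fiberwise weight-space computation showing that the normal bundle of the fixed locus carries no trivial $T_j$-weight. One small point worth making explicit in $(iii)$: the $G$-stabilizer of a point $(x,v) \in N_s$ is the subgroup of $G_x$ fixing $v$, and since $G_x/G_x^0$ is finite, its dimension equals that of the $T_j$-stabilizer of $v$; this is what lets you pass from ``$T_j$-stabilizer has dimension $<s$'' to ``$G$-stabilizer has dimension $<s$''. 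With that clarification, your sketch stands on its own and in fact recovers the content of the cited reference.
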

\begin{proof} This is a special case of the more general
result \cite[Proposition~2.2]{VV} which holds for regular $G$-schemes over
any connected and separated Noetherian base scheme.
\end{proof}
Since $X$ is smooth, it follows from Proposition~\ref{prop:strata} that
$X_s$ is a smooth and closed subvariety of $X_{\le s}$, which is itself smooth.

In this paper, we are interested in applying Proposition~\ref{prop:strata}
to smooth toric varieties. For such varieties, the various strata $X_s$ have
simple description in terms of the torus orbits. Since this description
will be crucial for our results, we sketch it in some detail here and
refer the reader to \cite[Section~3]{Fulton1} for proofs.

So let $T$ be a split torus of rank $n$ over $k$. Let $M =
\Hom(\G_m, T)$ be its lattice of one-parameter subgroups and let 
$M^{\vee}$ be the character lattice of $T$. Let $\Delta$ be a 
fan in $M_{\R}$ and let $\Delta_1$ and $\Delta_{\rm max}$ denote the subsets of
the one-dimensional cones and the maximal cones in $\Delta$ respectively.

Let $X = X(\Delta)$ be the unique toric variety  
associated to the fan $\Delta$. We assume that every positive dimensional 
cone of $\Delta$ is generated by it edges such that the primitive vectors 
along these edges form a subset of a basis of $M$. This is 
equivalent to saying that $X$ is smooth. In this case, there is an one-to-one 
correspondence between the $T$-orbits in $X$ and the cones in $\Delta$. For 
every cone $\sigma \in \Delta$, the corresponding orbit $O_{\sigma}$ is 
isomorphic to the torus ${T}/{T_{\sigma}}$, where $T_{\sigma}$ is associated to the 
sublattice $M_{\sigma}$ of $M$ generated by $\sigma \cap M$.
Under this isomorphism, the origin (identity point) of ${T}/{T_{\sigma}}$
corresponds to the distinguished $k$-rational point $x_{\sigma}$ of $O_{\sigma}$.
In particular, for every $0 \le s \le n$, $X_s$ is of the form 
\begin{equation}\label{eqn:COMP}
X_s = {\underset{{\rm dim}(\sigma) = s}\coprod} \
O_{\sigma} \ \cong \  {\underset{{\rm dim}(\sigma) = s}\coprod} \ {T}/{T_{\sigma}}.
\end{equation}

We shall write $\tau \le \sigma$ if $\tau$ is a face of $\sigma$ as cones in
$\Delta$. The orbit closure $V_{\sigma}$ of $O_{\sigma}$ is the toric
variety associated to the fan 
$*({\sigma}) = \{\tau \in \Delta | \sigma \le \tau\}$, called the star of 
$\sigma$. Moreover, it is clear from the characterization of the 
smoothness of toric varieties that $V_{\sigma}$ is also smooth and is the union of
all orbits $O_{\tau}$ such that $\sigma$ is a face of $\tau$.
In particular, $O_{\sigma}$ is closed in $X$ if and only if $\sigma 
\in \Delta_{\rm max}$. The following general result will play a crucial role
in our study of the equivariant cobordism ring of toric varieties.

\begin{lem}\label{lem:rigiditysuff}
Let $G$ be a split diagonalizable group and let $T \subset G$ be a subtorus 
of rank $r \ge 1$. Let $G$ act on $X = (G/T) \times \P^n_k$
diagonally by acting trivially on $\P^n_k$ and via its natural action on
the quotient $G/T$. Let $E$ be a $G$-equivariant vector bundle of rank $d$ on 
$X$ such that in the eigenspace decomposition of $E$ with 
respect to $T$, the submodule corresponding to the trivial character is zero. 
Then $c^G_d(E)$ is a non-zero divisor in $\Omega^*_G(X)$.
\end{lem}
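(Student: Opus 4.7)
The plan is to reduce the statement, via the Morita isomorphism (Proposition~\ref{prop:Morita}) applied to $T\subset G$, to an explicit computation in $\Omega^*_T(\P^n_k)$ with $T$ acting trivially on $\P^n_k$. Since $G$ is commutative, the stabilizer in $G$ of any point $(hT,x)\in (G/T)\times \P^n_k$ equals $hTh^{-1}=T$, so the identification $G\stackrel{T}{\times}\P^n_k \cong (G/T)\times \P^n_k$ yields a ring isomorphism $\Omega^*_G(X)\xrightarrow{\cong}\Omega^*_T(\P^n_k)$. Under this isomorphism $E$ corresponds to a $T$-equivariant bundle $\widetilde{E}$ on $\P^n_k$ whose $T$-character eigenspace decomposition matches that of $E$; in particular $\widetilde{E}$ has no trivial character component, and by naturality of Chern classes $c^G_d(E)\mapsto c^T_d(\widetilde{E})$. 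So it suffices to show that $c^T_d(\widetilde{E})$ is a non-zero divisor in $R:=\Omega^*_T(\P^n_k)$.

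Next I would describe $R$ explicitly. By Lemma~\ref{lem:PROJ} applied with $Y=\Spec(k)$, one has an $S$-algebra isomorphism $R\cong S[\zeta]/(\zeta^{n+1})$, where $S=\bL[[t_1,\ldots,t_r]]$. The ring $S$ is an integral domain, as it embeds as a graded subring of the ordinary power series ring over the polynomial ring $\bL$. Hence $R$ is a free $S$-module with basis $1,\zeta,\ldots,\zeta^n$, and a direct comparison of coefficients shows that any element $P=a_0+a_1\zeta+\cdots+a_n\zeta^n\in R$ with $a_0\neq 0$ in $S$ is a non-zero divisor in $R$: indeed, if $PQ=0$ with $Q=b_0+\cdots+b_n\zeta^n$, then the vanishing of the $\zeta^i$-coefficient of $PQ$ in $S[\zeta]/(\zeta^{n+1})$ for $i=0,1,\ldots,n$ forces $b_0=\cdots=b_n=0$ by induction. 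It therefore suffices to verify that the image of $c^T_d(\widetilde{E})$ modulo $(\zeta)\subset R$ is a non-zero element of $S$.

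The final computation proceeds as follows. Since $T$ acts trivially on $\P^n_k$, I decompose $\widetilde{E}=\bigoplus_{\chi\neq 0}F_\chi\otimes L_\chi$, where each $F_\chi$ is a (non-equivariant) bundle on $\P^n_k$ of rank $r_\chi$ and $L_\chi$ is the trivial line bundle with $T$-action by the character $\chi$. Setting $u_\chi:=c^T_1(L_\chi)\in S$, for $\chi=(a_1,\ldots,a_r)\neq 0$ one has $u_\chi=[a_1]_F t_1+_F\cdots+_F[a_r]_F t_r$, whose lowest-order term $a_1t_1+\cdots+a_rt_r$ is non-zero, so $u_\chi$ is a non-zero element of the domain $S$. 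By the Whitney sum formula, $c^T_d(\widetilde{E})=\prod_{\chi\neq 0}c^T_{r_\chi}(F_\chi\otimes L_\chi)$. The central assertion is that $c^T_{r_\chi}(F_\chi\otimes L_\chi)\equiv u_\chi^{r_\chi}\pmod{(\zeta)}$ in $R$. This follows from the splitting principle: the universal polynomial expressing the top Chern class of $F_\chi\otimes L_\chi$ in terms of the formal group law $F$, the $c_i(F_\chi)$, and $u_\chi$ reduces to $u_\chi^{r_\chi}$ when all $c_i(F_\chi)$ with $i\ge 1$ are set to zero (as seen from the trivial bundle case $F_\chi=\sO^{\oplus r_\chi}$, where $F_\chi\otimes L_\chi=L_\chi^{\oplus r_\chi}$). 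Moreover, every line bundle on $\P^n_k$ is some $\sO(m)$ with $c_1(\sO(m))=[m]_F\zeta\in(\zeta)$, so by the splitting principle $c_i(F_\chi)\in(\zeta)$ for $i\ge 1$, confirming the assertion. Therefore the $\zeta^0$-coefficient of $c^T_d(\widetilde{E})$ equals $\prod_{\chi\neq 0}u_\chi^{r_\chi}$, a non-zero product in the domain $S$, and the conclusion follows from the preceding paragraph.

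The main obstacle is making the splitting-principle reductions rigorous in the cobordism setting, in particular the identity $c^T_{r_\chi}(F_\chi\otimes L_\chi)\equiv u_\chi^{r_\chi}\pmod{(\zeta)}$; this is handled by the universal polynomial formulas for Chern classes of tensor products over the Lazard ring $(\bL,F)$, which are valid in every oriented cohomology theory by~\cite{LM}.
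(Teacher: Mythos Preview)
Your argument is correct and shares the same opening moves as the paper's proof: both use the Morita isomorphism to identify $\Omega^*_G(X)\cong\Omega^*_T(\P^n)\cong\Omega^*(\P^n)[[t_1,\ldots,t_r]]$, then use Thomason's character decomposition and the Whitney sum formula to reduce to a single summand $F_\chi\otimes L_\chi$ with $\chi\neq 0$.

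The endgames, however, are genuinely different. The paper invokes a splitting principle from \cite[Lemma~6.2]{Krishna3} to pass to an auxiliary space $Y$ (a tower of affine and projective bundles over $\P^n$) on which $F_\chi$ actually splits as $\bigoplus L_i$, notes that the $v_i=c_1(L_i)$ are nilpotent in $\Omega^*(Y)$, and then appeals to \cite[Lemma~5.3]{Krishna3} to conclude that $\prod_i F(f,v_i)$ is a non-zero divisor in $\Omega^*(Y)[[t_1,\ldots,t_r]]$. Your approach stays on $\P^n$ and instead exploits the explicit shape $R=S[\zeta]/(\zeta^{n+1})$: you reduce modulo $\zeta$ and compute the ``constant term'' to be $\prod_\chi u_\chi^{r_\chi}\neq 0$ in the domain $S$, which immediately forces $c^T_d(\widetilde E)$ to be a non-zero divisor. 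Your route is more self-contained (no external lemmas from \cite{Krishna3}) and conceptually cleaner; the paper's route is more robust in that it would apply with $\P^n$ replaced by any smooth variety with torsion-free cobordism.

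One small remark on your justification of $c_i(F_\chi)\in(\zeta)$: invoking the splitting principle here is slightly awkward, since the splitting occurs on a different space where the ideal $(\zeta)$ has no direct meaning. The cleanest way to see this is to note that the quotient $R\to R/(\zeta)=S$ is exactly the $T$-equivariant pullback $\iota^*$ along the inclusion of a point $\iota:\Spec k\hookrightarrow\P^n$; then $\iota^*c^T_{r_\chi}(F_\chi\otimes L_\chi)=c^T_{r_\chi}(L_\chi^{\oplus r_\chi})=u_\chi^{r_\chi}$ directly, and the universal-polynomial discussion becomes unnecessary. Equivalently, $c_i(F_\chi)\in\Omega^i(\P^n)\subset(\zeta)$ for $i\ge 1$ by pure degree reasons, since $\bL$ is non-positively graded.
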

\begin{proof} 
Set $D = G/T$ and let $q : X = D \times \P^n   \to {\rm Spec}(k)$ be the 
structure map. We choose a splitting (not necessarily canonical) 
$G = T \times D$ and apply
\cite[Lemma~5.6]{Thomason3} to see that $E$ has a unique direct
sum decomposition 
\[
E = \ \stackrel{m}{\underset {i=1}{\oplus}} E_{\chi_i} \otimes q^*(L_{{\chi}_i}).
\]
Here, each $E_{\chi_i}$ is a $D$-bundle on $X$ and $L_{{\chi}_i}$ is the 
line bundle in ${\rm Pic}_T(k)$ corresponding to an 
$1$-dimensional representation ${\chi}_i$ of $T$. 
Since ${\rm rank}(E) = d$, we have by the Whitney sum formula,    
$c^G_d(E) = \stackrel{m}{\underset {i=1}{\prod}}c^G_{d_i}
\left(E_{{\chi}_i} {\otimes}L_{\chi_i}\right)$, where $d_i =
{\rm rank}(E_{\chi_i})$. We can thus assume that $E = 
E_{\chi} {\otimes} L_{\chi}$, where $\chi$ is not a trivial 
character of $T$ by our assumption. 

Since the $D$-equivariant vector bundles on $X$ are equivalent to ordinary
vector bundles on $\P^n$, we can identify $E_{\chi}$ with 
an ordinary vector bundle of rank $d$ on $\P^n$. Moreover,
we can apply Proposition~\ref{prop:Morita} and Lemma~\ref{lem:PROJ}
({\sl cf.} \cite[Lemma~6.1]{Krishna3}) to 
see that there is a ring isomorphism 
\[
\Omega^*_G(X) \cong \Omega^*_T(\P^n) \cong 
\Omega^*(\P^n)[[t_1, \cdots ,t_r]].
\]
Thus, we are reduced to showing that 
$c^T_d\left(E_{\chi} {\otimes} L_{\chi}\right)$ is a not a zero divisor 
in $\Omega^*(\P^n)[[t_1, \cdots ,t_r]]$ if $E_{\chi}$ is a vector bundle of rank
$d$ on $\P^n$ and $\chi$ is non-trivial character of $T$.

By \cite[Lemma~6.2]{Krishna3}, there is a morphism $Y \xrightarrow{p} \P^n$ 
which is a composition of the affine and projective bundles such that 
$p^*(E_{\chi})$ is a direct sum of line bundles and the $S$-algebra map 
$\Omega^*_T(\P^n) \xrightarrow{p^*} \Omega^*_T(Y)$ is injective.
We can thus replace $\P^n$ by $Y$ and assume that $E_{\chi}$ is a direct sum of
line bundles. Set $E_{\chi} = \stackrel{s}{\underset{i =1}\oplus} L_i$ and $v_i
= c_1(L_i)$. Since $\Omega^{> {\rm dim}(Y)}(Y) = 0$, we see that each $v_i$ is
nilpotent in $\Omega^*(Y)$.

We can write $f = c^T_1(L_{\chi}) = \stackrel{r}{\underset{j=1}\sum}
[m_j]_F t_j$ with $m_j \neq 0$ for some $j$. The Whitney sum formula then
yields $c^T_d(E_{\chi}) = \stackrel{s}{\underset{i =1}\prod}
F(f, v_i)$. Moreover, as $Y$ is obtained from $\P^n$ by a composition of
the affine and projective bundles, it follows from the projective bundle
formula and the homotopy invariance of the ordinary cobordism that
$\Omega^*(Y)$ is torsion-free. We now apply \cite[Lemma~5.3]{Krishna3}
with $R = \Z$ and $A = \Omega^*(Y)$ to conclude that $c^T_d(E_{\chi})$ is a
non-zero divisor in $\Omega^*_T(Y)$. 
\end{proof}

\section{Specialization maps in cobordism}\label{section:SPEC}
Our aim in the next two sections is to prove a decomposition theorem
for the equivariant cobordism rings of smooth toric varieties. 
This decomposition theorem
is motivated by the similar results for the equivariant $K$-theory in
\cite{VV} and for the equivariant Chow groups in \cite{Krishna2}.
As for the Chow groups and $K$-theory, we prove our decomposition theorem using
some specialization maps between certain equivariant cobordism rings.
We construct these maps in this section.
The main technical tool to do this is the {\sl deformation to 
the normal cone} method. Since this technique will be used at many steps
in the proofs, we briefly recall the construction from 
\cite[Chapter~5]{Fulton} for reader's convenience.

Let $X$ be a smooth $k$-scheme and let $Y \stackrel{f}{\inj} X$ be a
smooth closed subscheme of codimension $d \ge 1$. 
Let $\wt{M}$ be the blow-up of $X \times {\P}^1$ along $Y \times {\infty}$. 
Then $Bl_Y(X)$ is a closed subscheme of $\wt{M}$ and one denotes its
complement by $M$. There is a natural map ${\pi} : M \to {\P}^1$ such that
${\pi}^{-1}({\A}^1) \cong X \times {\A}^1$ with $\pi$ the projection map
and ${\pi}^{-1}(\infty) \cong X'$, where $X'$ is the total space of the
normal bundle $N_{Y/X}$ of $Y$ in $X$. One also gets the following 
diagram, where all the squares and the triangles commute.
\begin{equation}\label{eqn:DNC}
\xymatrix{
Y \ar[rd]_{u'} \ar[dd]_{f} \ar@/^{.5cm}/[rr]^{\ \ \ i_0} & & {Y \times {\P}^1} 
\ar[ll]^{p_Y}
\ar[dd]_{F} & Y \ar[l]_{\ \ \ i_{\infty}} \ar[dd]^{f'} \\
& {Y \times {\A}^1} \ar[dd]_{F'} \ar[ru]_{j'} & & \\
X \ar@/^{.4cm}/[rr]^{\ \ \ \ h} \ar[dr]_{u} & & M & X' \ar[l]^{i} \\
& {X \times {\A}^1}. \ar[ru]_{j} & & }
\end{equation}   
In this diagram, all the vertical arrows are the closed embeddings, $i_0$
and $i_{\infty}$ are the obvious inclusions of $Y$ in $Y \times {\P}^1$ along
the specified points, $i$ and $j$ are inclusions of the inverse
images of $\infty$ and ${\A}^1$ respectively under the map $\pi$, 
$u$ and $f'$ are are zero section embeddings and $p_Y$ is the projection
map. In particular, one has ${p_Y} \circ {i_0} = {p_Y} \circ {i_{\infty}}
= {id}_Y$.

We also make the observation here that in case $X$ is a $G$-variety
and $Y$ is $G$-invariant, then by letting $G$ act trivially on ${\P}^1$
and diagonally on $X \times {\P}^1$, one gets a natural action of $G$ on
$M$, and all the spaces in the above diagram become $G$-spaces and all the
morphisms become $G$-equivariant. This observation will be used in what 
follows. 
 
\subsection{Specialization maps}
Let $T$ be a split torus of rank $n$ and let $X = X(\Delta)$ be a smooth toric 
variety corresponding to a fan $\Delta$ in $M = \Hom(\G_m, T)$. 
For the rest of this paper, we fix a split torus $T$ and all toric varieties
will be associated to fans in the lattice $M$ of one-parameter subgroups in
$T$. There is a filtration of $X$ by $T$-invariant open
subsets 
\[
\emptyset = X_{\le -1} \subset X_{\le 0} \subset \cdots 
\subset X_{\le n} = X,
\]
where $X_{\le s}$ and $X_s$ are defined before.
In particular, $T$ acts freely on $X_{\le 0}$ and trivially
on $X_n$. In fact $X_{\le 0}$ is the dense orbit of $X$ isomorphic to $T$.
We fix $1 \le s \le n$ and let $X_s \stackrel{f_s}{\inj}
X_{\le s}$ and $X_{< s} \stackrel{g_s}{\inj} X_{\le s}$ denote the closed 
and the open embeddings respectively. Let ${\pi}: M_s \to {\P}^1$
be the deformation to the normal cone for the embedding $f_s$ as 
above. We have already observed that for the trivial action of 
$T$ on ${\P}^1$, $M_s$ has a natural $T$-action. Moreover,
the deformation diagram ~\eqref{eqn:DNC} is a diagram of smooth $T$-spaces.
For $0 \le t \le s$, we shall often denote the open subspace
${\left(M_s\right)}_{\le t}$ of $M_s$ by $M_{s, \le t}$. The terms like
$M_{s,t}$ and $M_{s, < t}$ (and also for $N_s$) will have similar
meaning in what follows. Since $T$ acts trivially on ${\P}^1$, it acts
on $M_s$ fiberwise, and one has $N_s = {\pi}^{-1}({\infty})$ and
\begin{eqnarray}\label{eqn:nolabel}
M_{s, \le t} \cap {\pi}^{-1}({\A}^1) = X_{\le t} \times {\A}^1 ; \
M_{s,t} \cap {\pi}^{-1}({\A}^1) = X_{t} \times {\A}^1.
\end{eqnarray}
Let $N_{s, \le t} \stackrel{i_{s, \le t}}{\inj} M_{s, \le t}$ and 
$X_{\le t} \times {\A}^1 \stackrel{j_{s,\le t}}{\inj} M_{s, \le t}$
denote the obvious closed and open embeddings. We define $i_{s,t}$ and 
$j_{s,t}$ similarly. Let $N_{s,t} \stackrel{{\eta}_{s,t}}{\inj}
N_{s, \le t}$ and $M_{s,t} \stackrel{{\delta}_{s,t}}{\inj} M_{s,\le t}$
denote the other closed embeddings. 
One has a commutative diagram
\begin{equation}\label{eqn:DNC*}
\xymatrix{
X_{\le t} \ar[r]^{g_{\le t}} \ar[d]_{f_{s, \le t}} & 
{X_{\le t} \times {\A}^1}
\ar[r]^{\ j_{s, \le t}} \ar[d] & M_{s, \le t} \ar[d] & & \\
X_{\le s} \ar[r]^{g_{0, \le s}} & X_{\le s} \times {\A}^1 \ar[r]^{\ \ j_{\le s}}
& M_s \ar[r] & X_{\le s} \times {\P}^1 \ar[r] &  X_{\le s},}
\end{equation}  
where $g_{\le t}$ is the 0-section embedding, and the composite of
all the maps in the bottom row is identity.
This gives us the diagram of equivariant cobordism groups
\begin{equation}\label{eqn:DNC*1}
\xymatrix{
\Omega^*_T\left(N_{s,t}\right) \ar[r]^{{{i_{s,t}}_*}} 
\ar[d]^{{{\eta}_{s,t}}_*} & 
\Omega^*_T\left(M_{s,t}\right) \ar[r]^{j_{s,t}^*} \ar[d]^{{{\delta}_{s,t}}_*}  & 
\Omega^*_T\left(X_{t} \times {\A}^1 \right) \ar[r]^{{g_{t}^*}} 
\ar[d]^{{f_t}_*} & \Omega^*_T\left(X_{t}\right) \ar[d]^{{f_t}_*} \\
\Omega^*_T\left(N_{s, \le t}\right) \ar[r]_{{i_{s, \le t}}_*} &
\Omega^*_T\left(M_{s, \le t}\right) \ar[r]_{j_{s, \le t}^*} &
\Omega^*_T\left(X_{s, \le t} \times {\A}^1 \right) 
\ar[r]_{\hspace*{.5cm} {g_{\le t}^*}} & 
\Omega^*_T\left(X_{\le t}\right),}
\end{equation}
where the left and the middle squares commute by Theorem~\ref{thm:Basic} $(i)$ 
and the right square commutes by Corollary~\ref{cor:G-torind}.
Since the last horizontal maps in both rows are natural isomorphisms by the 
homotopy invariance, we shall often identify the last two terms in both
rows and use ${j_{s, \le t}^*}$ and 
${\left(j_{s, \le t} \circ g_{\le t}\right)}^*$ interchangeably.

\begin{lem}\label{lem:STR}
For every $1 \le s \le n$ and $0 \le t \le s$, one has
\[
N_{s, t} \cong {\underset{{\rm dim}(\sigma) = s}{\underset{\sigma \in \Delta}
\coprod}} \ \
{\underset{{\rm dim}(\tau) = t}{\underset{\tau \le \sigma}\coprod} O_{\tau}} \ \
{\rm and} \ \
M_{s, t} \cong {\underset{{\rm dim}(\sigma) = s}{\underset{\sigma \in \Delta}
\coprod}} \ \
{\underset{{\rm dim}(\tau) = t}{\underset{\tau \le \sigma}\coprod} \left(O_{\tau}
\times \P^1\right)}.
\]
\end{lem}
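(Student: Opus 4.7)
The plan is to exploit smoothness of $X(\Delta)$ by giving an explicit $T$-equivariant product model near each orbit $O_\sigma$ with $\dim \sigma = s$, then to read off both strata from that model.

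For the first assertion, I would begin with the orbit decomposition $X_s = \coprod_{\dim \sigma = s} O_\sigma$; each $O_\sigma$ is closed in $X_{\le s}$ (its closure $V_\sigma$ in $X$ meets $X_{\le s}$ only in $O_\sigma$), so $N_s = \coprod_\sigma N_{O_\sigma/X_{\le s}}$. For a fixed $s$-dimensional $\sigma$, the affine toric chart $U_\sigma$ lies in $X_{\le s}$ since its orbits correspond to faces of $\sigma$, all of dimension $\le s$. Smoothness of $\sigma$ lets the primitive generators of its rays extend to a $\Z$-basis of $M$, producing a $T$-equivariant splitting $U_\sigma \cong \A^s \times O_\sigma$ in which $O_\sigma$ sits as the zero section. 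Consequently $N_{O_\sigma/X_{\le s}} = N_{O_\sigma/U_\sigma} \cong U_\sigma$ as a $T$-toric variety, and its $t$-stratum is $\coprod_{\tau \le \sigma,\, \dim \tau = t} O_\tau$. Taking disjoint unions over $\sigma$ yields the asserted formula for $N_{s,t}$.

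For the second assertion I would use the naturality of the deformation to the normal cone under open immersions together with its compatibility with products. Restricting $\pi : M_s \to \P^1$ over $U_\sigma \subset X_{\le s}$ produces $M(O_\sigma \subset U_\sigma)$. Using $U_\sigma \cong \A^s \times O_\sigma$ together with the trivial family $M(\{0\} \subset \A^s) \cong \A^s \times \P^1$ (the normal bundle of a point in affine space is trivial), one gets a $T$-equivariant isomorphism $(M_s)|_{U_\sigma} \cong U_\sigma \times \P^1$ with $T$ acting trivially on the $\P^1$-factor. The $t$-stratum of $U_\sigma \times \P^1$ is then $\coprod_{\tau \le \sigma,\, \dim \tau = t} O_\tau \times \P^1$, matching the $\sigma$-indexed summand in the claimed formula.

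The main obstacle is the global assembly. The charts $\{U_\sigma\}_{\dim \sigma = s}$ cover an open neighborhood of $X_s$ inside $X_{\le s}$, and any two of them meet in $U_\sigma \cap U_{\sigma'} = U_{\sigma \cap \sigma'}$, which lies in $X_{<s}$ since $\dim(\sigma \cap \sigma') < s$; over this overlap the deformation degenerates to the trivial family $U_{\sigma \cap \sigma'} \times \A^1$ with empty fiber over $\infty$. Hence the fibers over $\infty$ patch to give precisely the disjoint union $\coprod_\sigma N_{O_\sigma/X_{\le s}}$ matching $N_s$, while the $\A^1$-portions of the local pieces glue via these trivial overlaps. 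Verifying that the resulting assembly reproduces the asserted disjoint-union form of $M_{s,t}$ is the technical heart of the argument.
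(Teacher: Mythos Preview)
Your strategy mirrors the paper's: reduce to the affine charts $U_\sigma$ for $\dim\sigma=s$, use that each $U_\sigma$ is a $T$-equivariant vector bundle over $O_\sigma$ with zero section $O_\sigma$, and read off the strata. For $N_{s,t}$ this is correct and essentially identical to the paper's argument, since $N_s=\coprod_\sigma N_{O_\sigma/U_\sigma}\cong\coprod_\sigma U_\sigma$ is a genuine disjoint union of $T$-varieties.

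For $M_{s,t}$ there is a gap: your assertion ``$M(\{0\}\subset\A^s)\cong\A^s\times\P^1$'' does not hold. Triviality of the normal bundle does not imply triviality of the deformation family over $\P^1$. Already for $s=1$, gluing the two standard affine charts of $M$ shows $M(\{0\}\subset\A^1)\cong Bl_0(\A^2)$, the total space of $\sO_{\P^1}(-1)$; this is not $\A^1\times\P^1$ (compare global regular functions: $k[x,y]$ versus $k[t]$). Hence even in the single-cone case the $t$-stratum of $M_s$ for $t<s$ is a nontrivial torus bundle over $\P^1$ rather than $O_\tau\times\P^1$; for instance $M_{1,0}\cong\A^2\setminus\{0\}\not\cong\G_m\times\P^1$. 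The paper's own sketch makes the same claim (``$M_{\le s}\cong X_{\le s}\times\P^1$'') and defers to \cite{VV}. What does survive, and what the downstream rigidity argument in Corollary~\ref{cor:rigidstrata} really uses, is that each connected component of $M_{s,t}$ has the same $T$-equivariant cobordism as the corresponding $O_\tau\times\P^1$ (in both, a complementary subtorus acts freely with quotient $\P^1$). Your instinct to flag the assembly of $M_{s,t}$ as delicate is correct, but the issue already arises in the local product model, not only in the gluing.
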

\begin{proof}
This follows from \cite[Lemma~6.1]{VV} by very little 
modifications. We only give the sketch.
We first assume that the fan $\Delta$ consists of all the faces of a cone
$\sigma$ of dimension $s \ge 1$ such that $X_{\le s} = U_{\sigma}$. In this case, 
$X_{\le s}$ is a $T$-equivariant vector bundle over $O_{\sigma}$ in such a way 
that the zero-section is precisely the inclusion 
$O_{\sigma} \subset X_{\le s}$. In 
particular, one has $X_{\le s} \cong N_s$ and $M_{\le s} \cong X_{\le s} \times
\P^1$. We now see from ~\eqref{eqn:COMP} that 
\[
N_{s,t} \cong {\underset{{\rm dim}(\tau) = t}{\underset{\tau \le \sigma}\coprod} 
O_{\tau}} \ \ {\rm and} \ \
M_{s, t} \cong
{\underset{{\rm dim}(\tau) = t}{\underset{\tau \le \sigma}\coprod} \left(O_{\tau}
\times \P^1\right)}.
\]   
For the general case, we notice that $X_{s} = 
{\underset{{\rm dim}(\sigma) = s}\coprod} O_{\sigma}$ by ~\eqref{eqn:COMP} and
for an $s$-dimensional cone $\sigma \in \Delta$, the intersection of
$X_s$ with $U_{\sigma}$ is precisely $O_{\sigma}$. Hence, we reduce the special case
above.
\end{proof}

\begin{cor}\label{cor:rigidstrata}
Let $X = X(\Delta)$ be a smooth toric variety. Then for $s \ge 1$, $X_s$ is 
cohomologically rigid inside $X_{\le s}$. The same holds for the inclusions
$N_{s,t} \subset N_{s, \le t}$ and $M_{s,t} \subset M_{s, \le t}$.
\end{cor}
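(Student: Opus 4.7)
The plan is to reduce each of the three cohomological rigidity claims to a single application of \lemref{lem:rigiditysuff}.

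First, by \lemref{lem:STR}, each of the closed strata $X_s$, $N_{s,t}$, $M_{s,t}$ decomposes as a disjoint union of pieces of the form $O_\tau$ (respectively $O_\tau \times \P^1$), where $\tau$ is a $T$-cone of dimension $t$ and $T_\tau \subset T$ is the associated rank-$t$ stabilizing subtorus, so that $O_\tau \cong T/T_\tau$. Because a disjoint decomposition induces a product decomposition of $\Omega^*_T(-)$ and the top Chern class of the normal bundle restricts componentwise, it is enough to fix one component $Y$ and show that $c^T_t(N_{Y/Y'})$ is a non-zero divisor in $\Omega^*_T(Y)$, where $Y'$ denotes the ambient open stratum ($X_{\le s}$, $N_{s, \le t}$, or $M_{s, \le t}$) and $t$ is its codimension (equal to the dimension of $T_\tau$).

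Next, I verify the eigenspace hypothesis of \lemref{lem:rigiditysuff}. By \propref{prop:strata}(i) applied to the ambient smooth $T$-scheme $Y'$, the stratum of points with stabilizer of dimension exactly $t$ is the disjoint union of the fixed-point loci of finitely many rank-$t$ subtori. Hence $Y$ sits inside $Y'$ (locally around $Y$) as the full $T_\tau$-fixed locus. Consequently the fiber of $N_{Y/Y'}$ at any point of $Y$ is the complement of the tangent space to this fixed locus, and so its $T_\tau$-eigenspace decomposition contains no trivial character: any nonzero $T_\tau$-invariant vector would extend to a $T_\tau$-fixed direction outside $Y$, contradicting maximality of the fixed locus. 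The trivial $\P^1$-factor in the $M_s$-case does not contribute to the normal bundle and so plays no role here.

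Finally, I apply \lemref{lem:rigiditysuff} with $G = T$, with the subtorus in that lemma taken to be $T_\tau$ (of rank $r = t \ge 1$), and with $n = 0$ for the $X_s$ and $N_{s,t}$ cases and $n = 1$ for the $M_{s,t}$ case; the action of $T$ on the $\P^1$-factor is trivial, as required, since this is how the deformation diagram \eqref{eqn:DNC} was set up. The lemma then gives that $c^T_t(N_{Y/Y'})$ is a non-zero divisor in $\Omega^*_T(Y)$, which completes the reduction. The boundary case $t = 0$ (for $N_{s,t}$ and $M_{s,t}$) is vacuous: the inclusion is an equality and the codimension-zero Chern class is $1$. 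The only nontrivial step is the identification, via \propref{prop:strata}(i), of $O_\tau$ as the $T_\tau$-fixed locus of the ambient stratum, since this is what makes the normal bundle's eigenspace decomposition free of the trivial character and hence makes the reduction to \lemref{lem:rigiditysuff} uniform across the three assertions.
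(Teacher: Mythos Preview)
Your proof is correct and follows essentially the same route as the paper: decompose each stratum into components of the form $O_\tau$ (or $O_\tau \times \P^1$), use \propref{prop:strata} to see that each component is a full $T_\tau$-fixed locus in the ambient open piece so that the normal bundle has no trivial $T_\tau$-eigenspace, and then invoke \lemref{lem:rigiditysuff} with $n=0$ (or $n=1$ for $M_{s,t}$). The paper cites parts $(i)$ and $(iii)$ of \propref{prop:strata} together (since $(iii)$ says directly that nonzero normal vectors have smaller stabilizer), whereas you extract the eigenspace condition from $(i)$ alone; either is sufficient. One small remark: for the decomposition of $X_s$ itself you should cite~\eqref{eqn:COMP} rather than \lemref{lem:STR}, and be careful not to reuse the letter $t$ for both the stratum index and the codimension when treating the $X_s$ case (where the relevant rank and codimension are $s$, not $t$).
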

\begin{proof}
Let $d_s$ be the codimension of $X_s$ inside $X_{\le s}$. By ~\eqref{eqn:COMP},
it suffices to show that $O_{\sigma} = {T}/{T_{\sigma}}$ is cohomologically
rigid inside $X_{\le s}$, where $\sigma$ is a cone of $\Delta$ of dimension $s$.
Since $s \ge 1$, it follows from the parts $(i)$ and $(iii)$ of 
Proposition~\ref{prop:strata} that the conditions of 
Lemma~\ref{lem:rigiditysuff} are satisfied for $D = O_{\sigma}$ and $n =0$. 
We conclude that $O_{\sigma}$ is cohomologically rigid inside $X_{\le s}$. 
The conclusions for $N_{s,t}$ and $M_{s,t}$ follows in the same way using
Lemma~\ref{lem:STR} and Lemma~\ref{lem:rigiditysuff}.
\end{proof}

\begin{thm}\label{thm:specialization}
There are ring homomorphisms 
\[
{{Sp}}_{X, s}^{\le t} : \Omega^*_T\left(X_{\le t}\right)
\to \Omega^*_T\left(N_{s,\le t}\right);
\]
\[ 
{{Sp}}_{X, s}^{t} : \Omega^*_T\left(X_{t}\right)
\to \Omega^*_T\left(N_{s, t}\right)
\]
such that $i_{s,\le t}^* = {{Sp}}_{X, s}^{\le t} \circ
{j_{s, \le t}^*}$ and $i_{s,t}^* = {Sp}_{X, s}^{t} \circ
{j_{s,t}^*}$. Moreover, both the squares in the diagram
\begin{equation}\label{eqn:main}
\xymatrix{
\Omega^*_T\left(X_{\le t}\right) \ar[r]^{f_t^*}   
\ar[d]_{{Sp}_{X, s}^{\le t}} & \Omega^*_T\left(X_{t}\right)
\ar[d]^{{Sp}_{X, s}^{t}} \ar[r]^{{f_t}_*} &
\Omega^*_T\left(X_{\le t}\right) \ar[d]^{{Sp}_{X, s}^{\le t}} \\
\Omega^*_T\left(N_{s,\le t}\right) \ar[r]_{{\eta}_{s,t}^*} &
\Omega^*_T\left(N_{s,t}\right) \ar[r]_{{{\eta}_{s,t}}_*} &
\Omega^*_T\left(N_{s,\le t}\right)}
\end{equation} 
commute.
\end{thm}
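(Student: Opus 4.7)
The plan is to build $Sp_{X,s}^{\le t}$ and $Sp_{X,s}^t$ as the unique factorisations of the Gysin restrictions $i_{s,\le t}^*$ and $i_{s,t}^*$ through the surjections coming from the open part of the deformation. The fundamental geometric input is that the smooth projection $\pi : M_s \to \P^1$ realises $N_s$ as its fibre over $\infty$, so the normal bundle $N_{N_s/M_s}$ is $\pi^{*}(N_{\{\infty\}/\P^1})$, a trivial line bundle; moreover, since $T$ acts trivially on $\P^1$, this normal bundle is $T$-equivariantly trivial and therefore has vanishing $c^{T}_{1}$. The self-intersection formula (\propref{prop:SIF}) then gives $i_{s,\le t}^* \circ i_{s,\le t\,*} = 0$ on $\Omega^*_T(N_{s,\le t})$ and likewise $i_{s,t}^* \circ i_{s,t\,*} = 0$. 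Writing $h_{\le t} = j_{s,\le t} \circ g_{\le t}$ for the inclusion of the fibre over $0$, the localisation sequence (\thmref{thm:Basic}(ii)) combined with the homotopy invariance iso $g_{\le t}^*$ shows that $h_{\le t}^*$ is a ring surjection with kernel $i_{s,\le t\,*}(\Omega^*_T(N_{s,\le t}))$. The vanishing above forces $i_{s,\le t}^*$ to kill this kernel, so there is a unique ring map $Sp_{X,s}^{\le t} : \Omega^*_T(X_{\le t}) \to \Omega^*_T(N_{s,\le t})$ with $Sp_{X,s}^{\le t} \circ j_{s,\le t}^{*} = i_{s,\le t}^*$ (after the $g_{\le t}^*$-identification); surjectivity of $h_{\le t}^*$ makes the ring property automatic. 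The same argument applied to $h_t = j_{s,t} \circ g_t$ produces $Sp_{X,s}^t$.

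For the left square, I would chase a lift. Given $\alpha \in \Omega^*_T(X_{\le t})$, pick $\widetilde\alpha \in \Omega^*_T(M_{s,\le t})$ with $h_{\le t}^*(\widetilde\alpha) = \alpha$. The identities $h_{\le t} \circ f_t = \delta_{s,t} \circ h_t$ and $i_{s,\le t} \circ \eta_{s,t} = \delta_{s,t} \circ i_{s,t}$ are evident from the construction of the various strata inside $M_{s,\le t}$. By contravariant functoriality, $\delta_{s,t}^*(\widetilde\alpha)$ is a lift of $f_t^*(\alpha)$ under $h_t^*$, and hence
\[
\eta_{s,t}^* \circ Sp_{X,s}^{\le t}(\alpha) = \eta_{s,t}^{*}\bigl(i_{s,\le t}^{*}(\widetilde\alpha)\bigr) = i_{s,t}^{*}\bigl(\delta_{s,t}^{*}(\widetilde\alpha)\bigr) = Sp_{X,s}^{t} \circ f_t^{*}(\alpha).
\]

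The right square requires one further ingredient: the equivariant transverse base change \corref{cor:G-torind}. I claim both Cartesian squares
\[
\xymatrix@C.6pc@R.7pc{ X_t \ar[r]^{h_t} \ar[d]_{f_t} & M_{s,t} \ar[d]^{\delta_{s,t}} \\ X_{\le t} \ar[r]_{h_{\le t}} & M_{s,\le t} }
\qquad\text{and}\qquad
\xymatrix@C.6pc@R.7pc{ N_{s,t} \ar[r]^{i_{s,t}} \ar[d]_{\eta_{s,t}} & M_{s,t} \ar[d]^{\delta_{s,t}} \\ N_{s,\le t} \ar[r]_{i_{s,\le t}} & M_{s,\le t} }
\]
consist of transverse closed immersions in $\sV^{S}_{T}$: in each square the horizontal map is the inclusion of a $\pi$-fibre, whose normal direction is the single ``$\P^{1}$-direction'' that is already contained in the tangent space of the vertical $\delta_{s,t}$, and a direct dimension count confirms that the tangent spaces span $TM_{s,\le t}$ along the expected intersection. \corref{cor:G-torind} then yields $h_{\le t}^{*} \circ \delta_{s,t\,*} = f_{t\,*} \circ h_t^{*}$ and $i_{s,\le t}^{*} \circ \delta_{s,t\,*} = \eta_{s,t\,*} \circ i_{s,t}^{*}$. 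For $\beta \in \Omega^*_T(X_t)$ with a lift $\widetilde\beta \in \Omega^*_T(M_{s,t})$ under $h_t^{*}$, the first identity shows $\delta_{s,t\,*}(\widetilde\beta)$ lifts $f_{t\,*}(\beta)$ under $h_{\le t}^{*}$, and then
\[
Sp_{X,s}^{\le t} \circ f_{t\,*}(\beta) = i_{s,\le t}^{*}\bigl(\delta_{s,t\,*}(\widetilde\beta)\bigr) = \eta_{s,t\,*}\bigl(i_{s,t}^{*}(\widetilde\beta)\bigr) = \eta_{s,t\,*} \circ Sp_{X,s}^{t}(\beta).
\]

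The principal technical points are the transversality checks for the two Cartesian squares above and the identification of $N_{N_s/M_s}$ as an equivariantly trivial line bundle (so that $c^{T}_{1} = 0$ and the defining factorisation exists); once these geometric facts are in hand, the rest is a formal chase using the properties of equivariant cobordism collected in Sections~\ref{section:AC} and~\ref{section:PRELIMS}.
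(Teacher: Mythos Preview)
Your argument is correct and, for the construction of the specialization maps and the right square of~\eqref{eqn:main}, it is essentially the paper's argument: both use the self-intersection formula together with the equivariant triviality of $N_{N_s/M_s}$ to get $i^*\circ i_*=0$, then factor through the localization surjection; and both invoke \corref{cor:G-torind} on the transverse square $N_{s,t}=N_{s,\le t}\cap M_{s,t}$ inside $M_{s,\le t}$ (the paper cites \cite[Lemma~1]{VV1} for this transversality).

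Where you diverge is the left square. The paper first proves the right square, then uses the injectivity of ${\eta_{s,t}}_*$---which it obtains from cohomological rigidity of $N_{s,t}\subset N_{s,\le t}$ via \corref{cor:rigidstrata} and \propref{prop:split}---to reduce the left square to the composite square, and finishes with \lemref{lem:RSIF}. Your argument is more elementary: you lift $\alpha$ to $\widetilde\alpha\in\Omega^*_T(M_{s,\le t})$ and use only contravariant functoriality along the geometric identities $h_{\le t}\circ f_t=\delta_{s,t}\circ h_t$ and $i_{s,\le t}\circ\eta_{s,t}=\delta_{s,t}\circ i_{s,t}$. This bypasses the rigidity input entirely for the left square, so the commutativity of~\eqref{eqn:main} in your version does not depend on \corref{cor:rigidstrata}. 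That is a genuine, if modest, simplification; the paper's route has the compensating advantage that the rigidity machinery is needed anyway for \propref{prop:decomposition1}, so invoking it here costs nothing extra in the global architecture.
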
   
\begin{proof}
We consider the commutative diagram
\begin{equation}\label{eqn:special}
\xymatrix{
0 \ar[d] & 0 \ar[d] & 0 \ar[d] & \\
\Omega^*_T\left(N_{s,t}\right) 
\ar[r]^{{{i_{s,t}}_*}} \ar[d]^{{{\eta}_{s,t}}_*} &
\Omega^*_T\left(M_{s,t}\right) 
\ar[r]^{j_{s,t}^*} \ar[d]^{{{\delta}_{s,t}}_*} & 
\Omega^*_T\left(X_{t}\right) \ar[r] \ar[d]^{{f_t}_*} & 0 \\
\Omega^*_T\left(N_{s,\le t}\right) 
\ar[r]^{{{i_{s, \le t}}_*}} \ar[d] &
\Omega^*_T\left(M_{s,\le t}\right) 
\ar[r]^{j_{s, \le t}^*} \ar[d] & 
\Omega^*_T\left(X_{\le t}\right) \ar[r] \ar[d] & 0 \\
\Omega^*_T\left(N_{s,\le t-1}\right) 
\ar[r]^{{{i_{s, \le t-1}}_*}} \ar[d] &
\Omega^*_T\left(M_{s,\le t-1}\right) 
\ar[r]^{j_{s, \le t-1}^*} \ar[d] & 
\Omega^*_T\left(X_{\le t-1}\right) \ar[r] \ar[d] & 0. \\ 
0 & 0 & 0 & }
\end{equation}
All rows in this diagram are exact by Theorem~\ref{thm:Basic} $(ii)$, and
all columns are exact by Corollary~\ref{cor:rigidstrata} and 
Proposition~\ref{prop:split}. 

We apply Proposition~\ref{prop:SIF}
to the inclusions $i_{s,\le t}$ and $i_{s,t}$ to see that the composites
$i_{s,\le t}^* \circ {i_{s,\le t}}_*$ and $i_{s,t}^* \circ {i_{s,t}}_*$ are
multiplication by the first Chern class $c^T_1$ of the corresponding
normal bundles. But these normal bundles are the inverse images of a
line bundle on $\P^1$. It follows that these 
normal bundles are trivial, because the restriction of 
any line bundle on $\P^1$ to $\infty \in \P^1$ and hence on the
fiber over $\infty$ is clearly trivial.  
We conclude that the composites $i_{s,\le t}^* \circ {i_{s,\le t}}_*$ and 
$i_{s,t}^* \circ {i_{s,t}}_*$ are zero. 

The above diagram now
automatically defines the specializations ${Sp}_{X, s}^{\le t}$
and ${Sp}_{X, s}^{t}$ and gives the desired factorization
of $i_{s,\le t}^*$ and $i_{s,t}^*$. Since $i_{s,t}^*$ and
$j_{s,t}^*$ are ring homomorphisms, and since the latter is
surjective, we deduce that ${Sp}_{X, s}^{t}$ is also a ring homomorphism. 
The map ${Sp}_{X, s}^{\le t}$ is a ring homomorphism for the same reason.

We are now left with the proof of the commutativity of ~\eqref{eqn:main}.  
To prove that the right square commutes, we consider the following
diagram.
\begin{equation}\label{eqn:nospecial}
\xymatrix@C.7pc{
\Omega^*_T\left(M_{s,t}\right) \ar@{->>}[drr]^{j_{s,t}^*} 
\ar[rrr]^{{{\delta}_{s,t}}_*} 
\ar[dd]_{i_{s,t}^*} & & & 
\Omega^*_T\left(M_{s,\le t}\right) \ar@{->>}@/^/[drr]^{\ \ \ \ j_{s,\le t}^*} 
\ar[dd]_{i_{s,\le t}^*} & & \\
& & \Omega^*_T\left(X_{t}\right) \ar[lld]_{{Sp}_{X, s}^{t}} 
\ar@/_1pc/[rrr]_{\ \ \ \ \ \ \ {f_t}_*} & & & 
\Omega^*_T\left(X_{\le t}\right) \ar@/^1pc/[lld]^{{Sp}_{X, s}^{\le t}} \\
\Omega^*_T\left(N_{s, t}\right) \ar[rrr]^{{{\eta}_{s,t}}_*} & & &
\Omega^*_T\left(N_{s, \le t}\right) & & }
\end{equation}
It is easy to check that $N_{s, \le t}$ and $M_{s,t}$ are 
transverse over $M_{s,\le t}$ ({\sl cf.} \cite[Lemma~1]{VV1})
and hence the back face of the above
diagram commutes by Corollary~\ref{cor:G-torind}. The upper face commutes
by diagram ~\eqref{eqn:special}. Since $j_{s,t}^*$ is surjective, a
diagram chase shows that the lower face also commutes, which is what
we needed to prove.

Finally, since we have shown in diagram ~\eqref{eqn:special} that
${{{\eta}_{s,t}}_*}$ is injective and since  
the right square commutes, it now suffices to show that
the composite square in ~\eqref{eqn:main} commutes in order to show that
the left square there commutes. 

By Lemma~\ref{lem:RSIF}, the composite maps ${f_t}_* \circ f_t^*$
and ${{{\eta}_{s,t}}_*} \circ {\eta}_{s,t}^*$ are multiplication by
${f_t}_* (1)$ and ${{{\eta}_{s,t}}_*}(1)$ respectively. Since 
${Sp}_{X, s}^{\le t}$ and ${Sp}_{X, s}^{t}$ are ring
homomorphisms, it suffices to show that 
\[
{Sp}_{X, s}^{\le t}\left({f_t}_* \circ j_{s,t}^* (1)\right) =
{Sp}_{X, s}^{\le t}\left({f_t}_* (1)\right) = {{{\eta}_{s,t}}_*}(1).
\]
But this follows directly from the commutativity of the right square in
~\eqref{eqn:main}.
\end{proof}  

\section{Decomposition for equivariant cobordism ring}
\label{section:DECOMP}
We need the following intermediate step for the decomposition theorem
for the equivariant cobordism rings of smooth toric varieties. 
Let $X = X(\Delta)$ be a smooth toric variety as above.
\begin{prop}\label{prop:decomposition1}
The restriction maps 
\[
\Omega^*_T\left(X_{\le s}\right) \stackrel{(f_s^*, g_s^*)}{\longrightarrow}
\Omega^*_T\left(X_{s}\right) \times \Omega^*_T\left(X_{< s}\right)
\]
define an isomorphism of rings
\[
\Omega^*_T\left(X_{\le s}\right) \xrightarrow{\cong}
\Omega^*_T\left(X_{s}\right) {\underset {\Omega^*_T\left(N_s^0 \right)}
{\times}} \Omega^*_T\left(X_{< s}\right),
\]
where $\Omega^*_T\left(X_{s}\right) \xrightarrow{{\eta}_{s,\le s-1}^*}
\Omega^*_T\left(N_s^0 \right)$
is the pull-back
\[
\Omega^*_T\left(X_{s}\right) \xrightarrow{\cong} 
\Omega^*_T\left(N_{s}\right) \to \Omega^*_T\left(N_s^0 \right)
\]
and 
\[
\Omega^*_T\left(X_{< s}\right) \stackrel{{{Sp}_{X, s}^{\le s-1}}}
{\longrightarrow} \Omega^*_T\left(N_{s, \le s-1}\right) =
\Omega^*_T\left(N_s^0 \right)
\]
is the specialization map of Theorem~\ref{thm:specialization}.
\end{prop}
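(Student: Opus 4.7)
The plan is to derive Proposition~\ref{prop:decomposition1} as a direct instance of Proposition~\ref{prop:split} applied to the embedding $f_s : X_s \hookrightarrow X_{\le s}$, and then unravel the abstract fibre product and the two structure maps so that they match the concrete description in the statement. Everything is in place: by Corollary~\ref{cor:rigidstrata}, $X_s$ is cohomologically rigid in $X_{\le s}$, so Proposition~\ref{prop:split} immediately yields
\[
\Omega^*_T(X_{\le s}) \;\xrightarrow{\cong}\; \Omega^*_T(X_s) \underset{\widetilde{\Omega^*_T(X_s)}}{\times} \Omega^*_T(X_{<s}),
\]
where $\widetilde{\Omega^*_T(X_s)} = \Omega^*_T(X_s)/\bigl(c^T_{d_s}(N_s)\bigr)$ with $d_s = \codim(X_s \subset X_{\le s})$, and the two structure maps are the canonical quotient and the map induced from $i_{f_s}^*$ modulo the image of the pushforward.

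The next step is to identify $\widetilde{\Omega^*_T(X_s)}$ with $\Omega^*_T(N_s^0)$. I would use the localization sequence of the pair $(N_s, N_s^0)$ attached to the zero section $X_s \hookrightarrow N_s$ from Theorem~\ref{thm:Basic}$(ii)$. The zero section is cohomologically rigid because its normal bundle is again $N_s$ and $c^T_{d_s}(N_s)$ is a non-zero divisor in $\Omega^*_T(X_s)$ (by Corollary~\ref{cor:rigidstrata}); consequently, applying Proposition~\ref{prop:split} a second time, the localization sequence is short exact. Combining the homotopy isomorphism $\Omega^*_T(X_s) \xrightarrow{\cong} \Omega^*_T(N_s)$ with the self-intersection formula (Proposition~\ref{prop:SIF}), which identifies the pushforward from $X_s$ into $\Omega^*_T(N_s)$ with multiplication by $c^T_{d_s}(N_s)$, yields the natural identification $\Omega^*_T(N_s^0) \cong \widetilde{\Omega^*_T(X_s)}$, and at the same time shows that the map labelled $\eta_{s,\le s-1}^*$ in the statement is precisely the canonical surjection $\Omega^*_T(X_s) \twoheadrightarrow \widetilde{\Omega^*_T(X_s)}$ produced by Proposition~\ref{prop:split}.

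The remaining and genuinely nontrivial point is that the second structure map from Proposition~\ref{prop:split}, which a priori is the composite
\[
\Omega^*_T(X_{<s}) \;=\; \frac{\Omega^*_T(X_{\le s})}{{f_s}_*\bigl(\Omega^*_T(X_s)\bigr)} \;\xrightarrow{f_s^*}\; \widetilde{\Omega^*_T(X_s)} \;\cong\; \Omega^*_T(N_s^0),
\]
coincides with the specialization map $Sp^{\le s-1}_{X,s}$ constructed in Theorem~\ref{thm:specialization}. I would prove this by a diagram chase in the big diagram~(\ref{eqn:special}) (taking $t = s-1$): the surjection $j^*_{s,\le s-1}$ (composed with the homotopy isomorphism) agrees with the restriction $\Omega^*_T(X_{\le s}) \twoheadrightarrow \Omega^*_T(X_{<s})$ used in Proposition~\ref{prop:split}, because the top face of the deformation factors through $X_{\le s} \subset X_{\le s}\times\A^1 \subset M_{s,\le s-1}$ over the $0$-slice via homotopy invariance. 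Hence both maps, when precomposed with the common surjection $\Omega^*_T(X_{\le s}) \twoheadrightarrow \Omega^*_T(X_{<s})$, equal the restriction $\Omega^*_T(X_{\le s}) \to \Omega^*_T(N_s^0)$ obtained by first restricting to $X_s$ and then to $N_s^0$ (equivalently, restricting through the $\infty$-fibre of $M_{s,\le s-1}$), so they agree. Surjectivity of $j^*_{s,\le s-1}$ gives the equality of the two maps.

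The main obstacle is precisely this last identification; once the deformation diagram~(\ref{eqn:DNC*}) is set up and the vanishing $i^*_{s,\le s-1} \circ {i_{s,\le s-1}}_* = 0$ (already established in the proof of Theorem~\ref{thm:specialization} via the triviality of the normal bundle over the $\infty$-fibre) is invoked, the diagram chase is essentially forced by the universal property of the fibre product: both maps land in the fibre product and descend from the same restriction on $\Omega^*_T(X_{\le s})$, so they must coincide.
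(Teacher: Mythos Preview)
Your approach matches the paper's exactly: apply Proposition~\ref{prop:split} using the cohomological rigidity from Corollary~\ref{cor:rigidstrata}, then identify the abstract fibre product with the concrete one. Your identification of $\widetilde{\Omega^*_T(X_s)}$ with $\Omega^*_T(N_s^0)$ via the localization sequence for the zero section, combined with Proposition~\ref{prop:SIF}, is precisely how the paper argues.

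The only place where the paper is more careful than your sketch is the final identification of the second structure map. You assert that ${Sp}^{\le s-1}_{X,s}$, precomposed with the open restriction $g_s^*:\Omega^*_T(X_{\le s})\twoheadrightarrow\Omega^*_T(X_{<s})$, equals $\eta^*_{s,\le s-1}\circ f_s^*$, and you justify this by saying that ``restricting through the $\infty$-fibre of $M_{s,\le s-1}$'' is equivalent to ``first restricting to $X_s$ and then to $N_s^0$''. That equivalence is exactly the non-formal content: the paper isolates it as the identity $f_{s,\infty}^*\circ {Sp}^{\le s}_{X,s} = f_s^*$ (equation~\eqref{eqn:special2}) and proves it by lifting through the surjection $j_{\le s}^*$, pulling back along $F_s:X_s\times\P^1\hookrightarrow M_s$, and using that the restrictions $g_{0,\le s}^*$ and $g_{\infty,\le s}^*$ agree on $\Omega^*_T(X_s\times\P^1)$. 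This comparison of the $0$- and $\infty$-slices is the step that your ``diagram chase is essentially forced'' remark skips over. Note also that your intermediate claim ``$X_{\le s}\subset X_{\le s}\times\A^1\subset M_{s,\le s-1}$'' is off by one level: $M_{s,\le s-1}$ contains $X_{<s}\times\A^1$, not $X_{\le s}\times\A^1$, so working only at level $t=s-1$ in diagram~\eqref{eqn:special} is not enough; the argument genuinely has to pass through $M_s$ and ${Sp}^{\le s}_{X,s}$ before descending.
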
 
\begin{proof} We only need to identify the pull-back and the specialization
maps with the appropriate maps of Proposition~\ref{prop:split}.
In the diagram 
\[
\xymatrix{
0 \ar[r] & \Omega^*_T\left(X_{s}\right) \ar[r]^{{f_{s, \infty}}_*} 
\ar[dr]_{c^G_{d_s}} & 
\Omega^*_T\left(N_{s}\right) \ar[r]^{{\eta}_{s, \le s-1}^*} 
\ar[d]^{f_{s, \infty}^*} &
\Omega^*_T\left(N_s^0 \right) \ar[r] & 0 \\
& & \Omega^*_T\left(X_{s}\right) & & }
\]
where $f_{s, \infty}: X_s \to N_s$ is the 0-section embedding, the top
sequence is exact by Proposition~\ref{prop:strata} and
Corollary~\ref{cor:rigidstrata}, and the lower
triangle commutes by Proposition~\ref{prop:SIF}. Since ${f_{s, \infty}^*}$
is an isomorphism, this immediately identifies the pull-back map
of the proposition with the quotient map
$\Omega^*_T\left(X_{s}\right) \to \frac{\Omega^*_T\left(X_{s}\right)}
{\left({c^T_{d_s}(N_s)}\right)}$.

Next, we consider the diagram
\[
\xymatrix{
\Omega^*_T\left(X_{\le s}\right) \ar[d]_{{{{Sp}_{X, s}^{\le s}}}}
\ar@{->>}[r]^{f_{s,\le s-1}^*} & \Omega^*_T\left(X_{< s}\right)
\ar[d]^{{{{Sp}_{X, s}^{\le s-1}}}} \\
\Omega^*_T\left(N_{s}\right) \ar[r] \ar[d]_{f_{s, \infty}^*} &
\Omega^*_T\left(N_s^0 \right) \\
\Omega^*_T\left(X_{s}\right) \ar[ur]_{{\eta}_{s, \le s-1}^*}. & }
\]
Since the top horizontal arrow in the above diagram is surjective,
we only need to show that
${{{{Sp}_{X, s}^{\le s-1}}}} \circ {f_{s,\le s-1}^*}
= {{\eta}_{s, \le s-1}^*} \circ f_s^*$ in order to identify 
${{Sp}_{X, s}^{\le s-1}}$ with the map $j^*$ of
Proposition~\ref{prop:split}.
It is clear from the diagram ~\eqref{eqn:special} and the definition of the 
specialization maps that the top square above commutes. We have just
shown above that the lower triangle also commutes. This reduces us to
showing that 
\begin{equation}\label{eqn:special2}
{f_{s, \infty}^*} \circ {{{{Sp}_{X, s}^{\le s}}}}
= f_s^*.
\end{equation}
If $X_s \times {\P}^1 \xrightarrow{F_s} M_s$ denotes the embedding
({\sl cf.} ~\eqref{eqn:DNC}), then for $x \in \Omega^*_T\left(X_{\le s}\right)$,
we can write $x = j_{\le s}^*(y)$ by the surjectivity of $j^*_{\le s}$
(see the proof of Theorem~\ref{thm:specialization}).
Then 
\[
\begin{array}{lllll}
{f_{s, \infty}^*} \circ {{{{Sp}_{X, s}^{\le s}}}} \circ
 j_{\le s}^*(y) & = & {f_{s, \infty}^*} \circ i_{s, \le s}^*(y) &
= & g_{\infty, \le s}^* \circ F_s^*(y) = g_{0, \le s}^* \circ F_s^*(y) \\   
& = & f_s^* \circ j_{\le s}^*(y) & = & f_s^*(x),
\end{array}
\]
where the second equality follows from Corollary~\ref{cor:G-torind}.
This proves ~\eqref{eqn:special2} and the proposition.
\end{proof}

\begin{thm}\label{thm:decomposition*}
For a smooth toric variety $X = X(\Delta)$, the ring homomorphism
\[
\Omega^*_T\left(X \right) \longrightarrow 
\stackrel{n}{\underset{s=0}{\prod}} \Omega^*_T\left(X_s \right)
\]
is injective. Moreover, its image consists of the $n$-tuples
$\left({\alpha}_s\right)$ in the product with the property that
for each $s = 1, \cdots , n$, the pull-back of ${\alpha}_s \in
\Omega^*_T\left(X_s \right)$ in $\Omega^*_T\left(N_{s, s-1} \right)$ is same
as ${{Sp}_{X, s}^{s-1}}\left({\alpha}_{s-1}\right) \in
\Omega^*_T\left(N_{s, s-1} \right)$.
In other words, there is a ring isomorphism
\[
\Omega^*_T\left(X \right) \stackrel{\cong}{\longrightarrow}
\Omega^*_T\left(X_n \right) {\underset{\Omega^*_T\left(N_{n,n-1} \right)}
{\times}} \Omega^*_T\left(X_{n-1} \right) 
{\underset{\Omega^*_T\left(N_{n-1,n-2} \right)}
{\times}} \cdots {\underset{\Omega^*_T\left(N_{1,0} \right)}
{\times}} \Omega^*_T\left(X_0 \right).
\]
\end{thm}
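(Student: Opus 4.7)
The plan is to iterate Proposition~\ref{prop:decomposition1} down the filtration $X_{\le 0} \subset X_{\le 1} \subset \cdots \subset X_{\le n} = X$ and then recognize the resulting nested fiber product as the pairwise chain in the statement. Formally, I would prove by induction on $s = 0, 1, \ldots, n$ that the restriction map $\Omega^*_T(X_{\le s}) \to \prod_{t=0}^s \Omega^*_T(X_t)$ is injective with image the tuples $(\alpha_0, \ldots, \alpha_s)$ satisfying the top-stratum compatibility $\eta_{r,r-1}^*(\alpha_r) = {Sp}_{X,r}^{r-1}(\alpha_{r-1})$ in $\Omega^*_T(N_{r,r-1})$ for each $1 \le r \le s$. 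The base case $s = 0$ is vacuous because $X_{\le 0} = X_0$, and the case $s = n$ is the desired theorem.

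For the inductive step, Proposition~\ref{prop:decomposition1} yields
\[
\Omega^*_T(X_{\le s}) \;\cong\; \Omega^*_T(X_s) \underset{\Omega^*_T(N_s^0)}{\times} \Omega^*_T(X_{<s}),
\]
and plugging in the inductive description of $\Omega^*_T(X_{<s})$ gives injectivity of $\Omega^*_T(X_{\le s}) \to \prod_{t=0}^s \Omega^*_T(X_t)$ together with the ``only if'' half of the image characterization, which is immediate from the functoriality already built into the specialization maps of Theorem~\ref{thm:specialization}. The substantive point is the converse. Given a tuple $(\alpha_0, \ldots, \alpha_s)$ satisfying only the pairwise compatibilities, the inductive hypothesis yields $\alpha_{<s} \in \Omega^*_T(X_{<s})$ restricting to $(\alpha_0, \ldots, \alpha_{s-1})$, and one must verify the gluing identity
\[
\eta_{s,\le s-1}^*(\alpha_s) \;=\; {Sp}_{X,s}^{\le s-1}(\alpha_{<s}) \quad \text{in } \Omega^*_T(N_s^0).
\]
The hypothesis supplies equality only after restriction to the open stratum $N_{s,s-1} \subset N_s^0$, so the main obstacle is to upgrade this to equality on all of $N_s^0$ by using the lower-order compatibilities stored in $\alpha_{<s}$.

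I would resolve this by applying the same inductive machinery to $N_s^0 = N_{s,\le s-1}$ in place of $X_{\le s-1}$. By Lemma~\ref{lem:STR}, $N_s^0$ is a disjoint union of equivariant vector bundles over the tori $O_\tau$ corresponding to proper faces $\tau$ of $s$-dimensional cones, and its stratification $\{N_{s,t}\}_{t \le s-1}$ satisfies analogues of Corollary~\ref{cor:rigidstrata} and of Proposition~\ref{prop:decomposition1} by the identical arguments used for $X$. The inductive hypothesis applied at parameter $s-1$ therefore realizes $\Omega^*_T(N_s^0)$ as a subring of $\prod_{t<s} \Omega^*_T(N_{s,t})$ cut out by pairwise specialization compatibilities. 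Using the commutativity of diagram~\eqref{eqn:main}, which intertwines the specializations with both $f_t^*$ and ${f_t}_*$, together with the naturality of deformation to the normal cone under the open immersions $X_{\le t} \subset X_{\le s-1}$, one checks that $\eta_{s,\le s-1}^*(\alpha_s)$ and ${Sp}_{X,s}^{\le s-1}(\alpha_{<s})$ produce the same tuple in $\prod_{t<s} \Omega^*_T(N_{s,t})$: the top coordinate agrees by hypothesis, and the lower coordinates are forced to agree by the pairwise compatibilities $\eta_{r,r-1}^*(\alpha_r) = {Sp}_{X,r}^{r-1}(\alpha_{r-1})$ for $r \le s-1$ that are already built into $\alpha_{<s}$. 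Injectivity for $N_s^0$ then forces the desired equality in $\Omega^*_T(N_s^0)$, closing the induction and yielding the theorem at $s = n$.
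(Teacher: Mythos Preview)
Your overall architecture matches the paper's: induct on $s$, invoke Proposition~\ref{prop:decomposition1} at the top stratum, and reduce the ``if'' direction to upgrading the equality
\[
\eta_{s,s-1}^*(\alpha_s) = Sp_{X,s}^{s-1}(\alpha_{s-1}) \quad \text{in } \Omega^*_T(N_{s,s-1})
\]
to the equality $\eta_{s,\le s-1}^*(\alpha_s) = Sp_{X,s}^{\le s-1}(\alpha_{<s})$ in $\Omega^*_T(N_s^0)$. You have correctly isolated the only nontrivial point.

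The gap is in how you close it. You propose to embed $\Omega^*_T(N_s^0)$ into $\prod_{t<s}\Omega^*_T(N_{s,t})$ and argue that the two elements have the same tuple there, asserting that ``the lower coordinates are forced to agree by the pairwise compatibilities $\eta_{r,r-1}^*(\alpha_r) = Sp_{X,r}^{r-1}(\alpha_{r-1})$ for $r \le s-1$.'' But those compatibilities live in $\Omega^*_T(N_{r,r-1})$ for $r<s$, not in $\Omega^*_T(N_{s,t})$; they merely certify that $\alpha_{<s}$ is a well-defined element of $\Omega^*_T(X_{<s})$ and say nothing relating $\alpha_s$ to $\alpha_t$ for $t\le s-2$. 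Concretely, the $t$-th coordinate of $Sp_{X,s}^{\le s-1}(\alpha_{<s})$ is $Sp_{X,s}^{t}(\alpha_t)$, while the $t$-th coordinate of $\eta_{s,\le s-1}^*(\alpha_s)$ is the pullback of $\alpha_s$ along $N_{s,t}\to X_s$; equating these for $t<s-1$ is a new condition, not a consequence of the chain of compatibilities among the $\alpha_r$ with $r\le s-1$. Equivalently, if $\delta$ is the difference of your two candidates in $\Omega^*_T(N_s^0)$, then $\delta$ vanishes on $N_{s,s-1}$, and the internal fiber-product compatibilities only tell you that the specialization of $\delta_{s-2}$ vanishes --- which does not force $\delta_{s-2}=0$.

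The paper handles this step by a different, sharper route: it shows directly that the restriction map
\[
\Omega^*_T(N_s^0) \longrightarrow \Omega^*_T(N_{s,s-1})
\]
is \emph{injective}, so equality on the top stratum already implies equality on all of $N_s^0$. After reducing to the affine chart $U_\sigma$ for an $s$-dimensional cone $\sigma$, one has $\Omega^*_T(X_s)\cong \bL[[t_1,\dots,t_s]]$ with $t_i=c^T_1(L_{\chi_i})$, the kernel of $\Omega^*_T(X_s)\to\Omega^*_T(N_s^0)$ is $(t_1\cdots t_s)$, and the kernel of $\Omega^*_T(X_s)\to\Omega^*_T(N_{s,s-1})$ is $\bigcap_i (t_i)$. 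The required injectivity is then the elementary identity $(t_1\cdots t_s)=\bigcap_i (t_i)$ in the graded power series ring. This algebraic input is what your argument is missing.
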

\begin{proof} We prove by the induction on the largest integer $s$ such that
$X_s \neq \emptyset$. 

If $s = 0$, there is nothing to prove. If $s > 0$, we have by induction
\begin{equation}\label{eqn:decomposition*1}
\Omega^*_T\left(X_{<s} \right) \stackrel{\cong}{\longrightarrow}
\Omega^*_T\left(X_{s-1} \right) {\underset{\Omega^*_T\left(N_{s-1,s-2} \right)}
{\times}} \cdots {\underset{\Omega^*_T\left(N_{1,0} \right)}
{\times}} \Omega^*_T\left(X_0 \right).
\end{equation}
Using this and Proposition~\ref{prop:decomposition1}, it suffices to
show that if ${\alpha}_s \in \Omega^*_T\left(X_s \right)$ and
if ${\alpha}_{<s} \in \Omega^*_T\left(X_{<s} \right)$ with the restriction
${\alpha}_{s-1} \in \Omega^*_T\left(X_{s-1} \right)$ are such that
${\alpha}_s \mapsto {\alpha}_s^0 \in \Omega^*_T\left(N_s^0 \right)$
and ${\alpha}_s \mapsto {\alpha}_{s, s-1} \in     
\Omega^*_T\left(N_{s,s-1} \right)$, then
\[
{{Sp}_{X, s}^{\le s-1}}\left({\alpha}_{<s}\right) = 
{\alpha}_s^0 \ \ {\rm iff} \ \
{{Sp}_{X, s}^{s-1}}\left({\alpha}_{s-1}\right) = 
{\alpha}_{s, s-1}.
\]
Using the commutativity of the left square in 
Theorem~\ref{thm:specialization},
this is reduced to showing that the restriction map
\begin{equation}\label{eqn:spec*1}
\Omega^*_T\left(N_s^0 \right) \to \Omega^*_T\left(N_{s,s-1} \right)
\end{equation}
is injective. 

It follows from ~\eqref{eqn:COMP} and Lemma~\ref{lem:STR} that  
\[
N_s = {\underset{{\rm dim}(\sigma) = s}{\underset{\sigma \in \Delta}
\coprod}} N_{{O_{\sigma}}/{X_{\le s}}} \ \ {\rm and} \ \
N_{s, s-1} = {\underset{{\rm dim} (\sigma) = s}{\underset{\sigma \in \Delta}
\coprod}} {\underset{\tau \in \partial \sigma}\coprod} O_{\tau}.
\]
Moreover, for every cone $\sigma \in \Delta$, we have 
$N_{{O_{\sigma}}/{X_{\le s}}} = N_{{O_{\sigma}}/{U_{\sigma}}}$, where $U_{\sigma}$ is the open
toric subvariety of $X$ defined by the fan consisting of all faces of
$\sigma$. Thus it suffices to prove ~\eqref{eqn:spec*1}
when $X$ is of the form $U_{\sigma}$, where $\sigma$ is an $s$-dimensional cone
in $\Delta$.

In this case, $U_{\sigma} \cong N_s$ is the $T$-equivariant vector bundle 
over $O_{\sigma} = T/{T_{\sigma}}$ of the form $(T \times V_{\sigma})/{T_{\sigma}} \to
T/{T_{\sigma}}$, where $V_{\sigma}$ is the $k$-vector space spanned by
the part $B$ of a basis of $M$ which generates $\sigma$. In particular,
$N_s = \stackrel{s}{\underset{j = 1}\oplus}  L_{\chi_j}$, where 
$\{\chi_1, \cdots , \chi_s\}$ is a part of a basis of $M$ and $L_{\chi}$ is the
$T$-equivariant line bundle on $O_{\sigma}$ associated to the character $\chi$
of $T$.

Next, it follows from Proposition~\ref{prop:decomposition1} that
\[
{\rm Ker}\left(\Omega^*_T\left(X_{s} \right) \to 
\Omega^*_T\left(N_{s,s-1} \right)\right) = \
\stackrel{s}{\underset{i=1}{\bigcap}} \left(c^T_{1}(L_{\chi_i})\right) 
\ \ {\rm and} 
\]   
\[
{\rm Ker}\left(\Omega^*_T\left(X_{s} \right) \to 
\Omega^*_T\left(N_{s}^0 \right)\right) = \left(c^T_{s}(N_s)\right).
\]
Setting ${\gamma}_i = c^T_{1}(L_{\chi_i})$ and ${\gamma} = 
c^T_{s}(N_s)$, we see from the surjectivity 
$\Omega^*_T\left(X_{s} \right) \surj \Omega^*_T \left(N_{s}^0 \right)$ that
showing the injectivity of the map in ~\eqref{eqn:spec*1} is equivalent to 
showing that  
\begin{equation}\label{eqn:elem2}
\left({\gamma}\right) = 
\left(\stackrel{s}{\underset{i=1}{\prod}} {\gamma}_i \right) 
= \ \stackrel{s}{\underset{i=1}{\bigcap}} \left({\gamma}_i\right)
\end{equation}
in $\Omega^*_T(X_s) \cong \Omega^*_{T_{\sigma}}(k) = \bL[[t_1, \cdots, t_s]]$.
Since $B = \{\chi_1, \cdots  , \chi_s\}$ can be identified with a basis of
$\widehat{T_{\sigma}}$, we have an isomorphism
$ \Omega^*_{T_{\sigma}}(k) \cong \bL[[t_1, \cdots, t_s]]$, where 
$t_i = c^T_1(L_{\chi_i})$ for $1 \le i \le s$ ({\sl cf.} ~\eqref{eqn:CBT*}). 
The equality of ~\eqref{eqn:elem2} is now easy to check and 
follows also from \cite[Lemma~5.4]{Krishna3}.
\end{proof}

\begin{lem}\label{lem:VV1}
For $1 \le s \le n$, there is a canonical isomorphism
\[
N_{s, s-1} = {\underset{{\rm dim}(\sigma) = s}{\underset{\sigma \in \Delta}
\coprod}} {\underset{\tau \in \partial \sigma}\coprod} O_{\tau}.
\]
Furthermore, for each $s$-dimensional cone $\sigma$ and $\tau \in \partial 
\sigma$, the composition of the map
\[
{{Sp}_{X, s}^{s-1}} : \Omega^*_T\left(X_{s-1}\right) =
{\underset{{\rm dim}(\tau) = s-1}{\underset{\tau \in \Delta}
\prod}}  \Omega^*_T\left(O_{\tau}\right) \ \to \ \Omega^*_T\left(N_{s,s-1}\right) =
{\underset{{\rm dim}(\sigma) = s}{\underset{\sigma \in \Delta}
\prod}} {\underset{\tau \in \partial \sigma}\prod}
\Omega^*_T\left(O_{\tau}\right)
\]
with the projection 
\[
{\rm Pr}_{\sigma, \tau} : 
{\underset{{\rm dim}(\sigma) = s}{\underset{\sigma \in \Delta}
\prod}} {\underset{\tau \in \partial \sigma}\prod}
\Omega^*_T\left(O_{\tau}\right) \ \to \ \Omega^*_T\left(O_{\tau}\right) 
\]
is the projection 
\[
{\underset{{\rm dim}(\tau) = s-1}{\underset{\tau \in \Delta}
\prod}}  \Omega^*_T\left(O_{\tau}\right) \ \to \ 
\Omega^*_T\left(O_{\tau}\right).
\]
\end{lem}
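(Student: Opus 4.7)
The first statement is exactly the case $t = s-1$ of \lemref{lem:STR}, since for an $s$-dimensional cone $\sigma$ the codimension-one faces $\tau$ of $\sigma$ are precisely the elements of $\partial\sigma$. The substance of the lemma is the identification of the specialization map, which I would do by reducing to a local computation on the affine open piece $U_\sigma \subset X_{\le s}$ for each maximal cone $\sigma$ of dimension $s$.

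Since $U_\sigma$ is a $T$-equivariant vector bundle over $O_\sigma$ with zero section $O_\sigma$, there is a canonical identification $U_\sigma = N_{O_\sigma/U_\sigma}$, so the deformation to the normal cone for the pair $O_\sigma \subset U_\sigma$ becomes trivial: the associated deformation space $M^\sigma$ is $T$-equivariantly isomorphic to $U_\sigma \times \P^1$, with the relevant closed and open immersions being the inclusions of the fibers over $\infty$ and $\A^1$ respectively. For any $\alpha \in \Omega^*_T((U_\sigma)_{s-1})$ the pullback $\pi_1^*\alpha \in \Omega^*_T((U_\sigma)_{s-1} \times \P^1)$ is a canonical lift of $\alpha$ across the restriction to the $\A^1$-fiber (using homotopy invariance), and it restricts to $\alpha$ again across the inclusion of the fiber over $\infty$. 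Hence the local specialization map $Sp^{s-1}_{U_\sigma,s}$ is the identity on $\Omega^*_T((U_\sigma)_{s-1}) = \Omega^*_T(N^\sigma_{s,s-1})$.

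To pass from this local computation to the global claim I would establish that the construction of \thmref{thm:specialization} is natural under the open immersion $U_\sigma \inj X_{\le s}$: intersecting each space in diagram~\eqref{eqn:special} with the preimage of $U_\sigma$ produces the corresponding diagram for $U_\sigma$, and each of the resulting pull-back/push-forward squares is transverse in the sense of \corref{cor:G-torind}. This yields a commutative square whose top row is $Sp^{s-1}_{X,s}$, whose bottom row is $Sp^{s-1}_{U_\sigma,s} = {\rm id}$, and whose vertical maps are the ring projections onto the factors indexed by the $(s-1)$-dimensional faces of $\sigma$. Composing further with ${\rm Pr}_{\sigma,\tau}$ then yields the claimed formula. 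The main obstacle is this functoriality step, i.e., verifying that every arrow in the full diagram defining $Sp$ commutes with restriction along $U_\sigma \inj X_{\le s}$; this amounts to a diagram chase based on \thmref{thm:Basic} and \corref{cor:G-torind}, together with the observation that the deformation to the normal cone is stable under base change by open immersions.
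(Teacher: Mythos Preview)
Your proposal is correct and follows essentially the same approach as the paper. The paper's own proof is simply a reference: it notes the first assertion is \lemref{lem:STR} and for the second defers to \cite[Lemma~6.1]{VV}, remarking that the $K$-theory argument there carries over verbatim once one has the specialization maps of \thmref{thm:specialization}. What you have written is precisely a reconstruction of that Vezzosi--Vistoli argument---reduce to the affine piece $U_\sigma$, use that the deformation to the normal cone of the zero section of a vector bundle is trivially a product with $\P^1$ (already observed in the proof of \lemref{lem:STR}), so that the local specialization is the identity, and then invoke functoriality of the entire construction under the open immersion $U_\sigma \hookrightarrow X_{\le s}$.
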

\begin{proof}
The first assertion is already shown in Lemma~\ref{lem:STR}.
The second assertion for the $K$-theory is shown in \cite[Lemma~6.1]{VV}.
The same proof goes through here as well in verbatim in view of
our description of the specialization maps in 
Theorem~\ref{thm:specialization}.
\end{proof}
The following is our first result in the description of the equivariant
cobordism ring of a smooth toric variety. This is analogous to the
similar description of the equivariant $K$-theory in \cite[Theorem~6.2]{VV} and
equivariant Chow groups in \cite[Theorem~5.4]{Brion}. Recall that
$S$ denotes the ring $S(T) = \Omega^*_T(k)$. 

\begin{thm}\label{thm:TORIC-I}
Let $X = X(\Delta)$ be a smooth toric variety associated to a fan $\Delta$
in $M_{\R}$. There is an injective homomorphism of $S$-algebras
\[
\Phi_X : \Omega^*_T(X) \to {\underset{\sigma \in \Delta_{\rm max}}\prod} 
S\left(T_{\sigma}\right),
\]
where $\Delta_{\rm max}$ is the set of maximal cones in $\Delta$.
An element $\left(a_{\sigma}\right) \in 
{\underset{\sigma \in \Delta_{\rm max}}\prod} 
S\left(T_{\sigma}\right)$ is in the image of this homomorphism if and only
if for any two maximal cones $\sigma_1$ and $\sigma_2$, the restrictions of
$a_{\sigma_1}$ and $a_{\sigma_2}$ to $S\left(T_{\sigma_1 \cap \sigma_2}\right)$ coincide.
\end{thm}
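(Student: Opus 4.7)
The plan is to repackage Theorem~\ref{thm:decomposition*} by eliminating the redundant data at non-maximal cones. For each $\sigma \in \Delta_{\max}$ the orbit $O_\sigma$ is closed in $X$, so pull-back along $O_\sigma \inj X$ followed by the Morita isomorphism (Proposition~\ref{prop:Morita}) gives an $S$-algebra map $\Omega^*_T(X) \to \Omega^*_T(O_\sigma) \cong \Omega^*_{T_\sigma}(k) = S(T_\sigma)$. I would define $\Phi_X$ as the product of these maps.

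Next, using \eqref{eqn:COMP} and the Morita isomorphism, Theorem~\ref{thm:decomposition*} identifies $\Omega^*_T(X)$ with the ring of tuples $(\alpha_\sigma)_{\sigma \in \Delta}$ with $\alpha_\sigma \in S(T_\sigma)$, subject to the compatibility condition coming from Theorem~\ref{thm:specialization}. Using Lemma~\ref{lem:VV1}, I would unpack this condition as follows: for every cone $\sigma \in \Delta$ of dimension $s \ge 1$ and every codimension-one face $\tau$ of $\sigma$, the restriction of $\alpha_\sigma$ to $S(T_\tau)$ via the inclusion $T_\tau \subset T_\sigma$ equals $\alpha_\tau$. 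The identification of the pull-back $\Omega^*_T(O_\sigma) \to \Omega^*_T(O_\tau)$ with the character-restriction map $S(T_\sigma) \to S(T_\tau)$ uses the description of the normal bundle $N_s|_{O_\sigma} = T \times_{T_\sigma} V_\sigma$ and locates $O_\tau$ inside it as the $T$-orbit of a point with stabilizer $T_\tau$.

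Finally, by the functoriality of character restriction, iterating the codimension-one compatibility through a chain of covering face relations yields $\alpha_\sigma|_{T_\tau} = \alpha_\tau$ for every face relation $\tau \le \sigma$. Consequently the whole tuple is determined by its values on $\Delta_{\max}$, which proves injectivity of $\Phi_X$; and for maximal $\sigma_1, \sigma_2$ with $\rho = \sigma_1 \cap \sigma_2$, the identity $\alpha_{\sigma_1}|_{T_\rho} = \alpha_\rho = \alpha_{\sigma_2}|_{T_\rho}$ produces the pairwise compatibility. Conversely, given a pairwise compatible tuple $(a_\sigma)_{\sigma \in \Delta_{\max}}$, I would define $\alpha_\tau := a_\sigma|_{T_\tau}$ for any $\sigma \in \Delta_{\max}$ with $\tau \le \sigma$; pairwise compatibility applied to $\tau \le \sigma_1 \cap \sigma_2$ shows the definition is independent of this choice, and the codimension-one compatibility then follows tautologically. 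The main technical input throughout is Lemma~\ref{lem:VV1}, which identifies the specialization map with componentwise projection and, by the same unwinding, the pull-back with character restriction; the remainder is combinatorial bookkeeping about face relations in the fan.
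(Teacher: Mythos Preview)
Your proposal is correct and follows essentially the same route as the paper's proof: both use Theorem~\ref{thm:decomposition*} together with \eqref{eqn:COMP}, Lemma~\ref{lem:VV1}, and the Morita identification $\Omega^*_T(O_\sigma)\cong S(T_\sigma)$ to realize $\Omega^*_T(X)$ as the subring of $\prod_{\sigma\in\Delta}S(T_\sigma)$ cut out by the face-restriction conditions $a_\sigma|_{T_\tau}=a_\tau$ for $\tau\le\sigma$, and then reduce to maximal cones since every cone lies in a maximal one. Your write-up simply makes explicit the combinatorial bookkeeping (iterating codimension-one steps, and the converse construction $\alpha_\tau:=a_\sigma|_{T_\tau}$) that the paper compresses into a single sentence.
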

\begin{proof}
Since every proper face of a cone $\sigma$ is contained in $\partial \sigma$,
it follows immediately from \eqref{eqn:COMP}, 
Theorem~\ref{thm:decomposition*}, Lemma~\ref{lem:VV1} and the isomorphism
$\Omega^*_T\left(O_{\sigma}\right) \cong S\left(T_{\sigma}\right)$ that
$\Omega^*_T(X)$ is a subring of ${\underset{\sigma \in \Delta}\prod} \
S\left(T_{\sigma}\right)$ consisting of elements $\left(a_{\sigma}\right)$
with the property that the restriction of $a_{\sigma} \in S\left(T_{\sigma}\right)$ 
to $S\left(T_{\tau}\right)$ coincides with $a_{\tau}$ whenever $\tau \le \sigma$.
The theorem now follows from the fact that every cone in $\Delta$ is contained
in a maximal cone in $\Delta$.
\end{proof}

If $X = X(\Delta)$ is a smooth projective toric variety, then all the maximal
cones in $\Delta$ are $n$-dimensional and the closed orbits $O_{\sigma}$ are
all $k$-rational points. We conclude from Theorem~\ref{thm:TORIC-I} that

\begin{cor}\label{cor:Proj-T}
For a smooth projective toric variety $X = X(\Delta)$, there is an injective
homomorphism of $S$-algebras
\[
\Omega^*_T(X) \inj  {\underset{\sigma \in \Delta_{\rm max}}\prod} 
S \cong S^{|\Delta_{\rm max}|}
\]
whose image is the set of elements $(a_{\sigma})$ such that for any two adjacent
maximal cones $\sigma_1$ and $\sigma_2$, the restrictions of
$a_{\sigma_1}$ and $a_{\sigma_2}$ to $S\left(T_{\sigma_1 \cap \sigma_2}\right)$ coincide.
\end{cor}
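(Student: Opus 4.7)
The plan is to deduce this corollary from Theorem~\ref{thm:TORIC-I} via two elementary observations about complete fans. First, projectivity of $X$ forces $\Delta$ to be complete, so every maximal cone $\sigma \in \Delta_{\rm max}$ has dimension $n$. Equivalently $M_\sigma = M$, whence $T_\sigma = T$ and $S(T_\sigma) = S$. Substituting into Theorem~\ref{thm:TORIC-I} identifies the target $\prod_{\sigma \in \Delta_{\rm max}} S(T_\sigma)$ with $S^{|\Delta_{\rm max}|}$ and provides the claimed injection of $S$-algebras.

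It remains to compare the two image conditions. Theorem~\ref{thm:TORIC-I} demands agreement of restrictions in $S(T_{\sigma_1 \cap \sigma_2})$ for \emph{all} pairs of maximal cones, while the corollary demands it only for adjacent pairs, i.e.\ those sharing a codimension-one face. One direction is immediate. For the converse, fix $\sigma_1, \sigma_2 \in \Delta_{\rm max}$ and set $\tau = \sigma_1 \cap \sigma_2 \in \Delta$. The star $\{\rho \in \Delta : \tau \le \rho\}$ descends, modulo $M_\tau$, to a complete fan in $(M/M_\tau)_{\R}$ whose maximal cones correspond bijectively to the maximal cones of $\Delta$ containing $\tau$. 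Invoking the standard fact that the dual graph of a complete fan is connected, I obtain a chain $\sigma_1 = \rho_0, \rho_1, \ldots, \rho_k = \sigma_2$ of maximal cones in $\Delta$, each containing $\tau$, with consecutive pairs adjacent. By hypothesis, $a_{\rho_i}$ and $a_{\rho_{i+1}}$ coincide in $S(T_{\rho_i \cap \rho_{i+1}})$; and since $T_\tau \subseteq T_{\rho_i \cap \rho_{i+1}}$, post-composing with the further restriction $S(T_{\rho_i \cap \rho_{i+1}}) \to S(T_\tau)$ preserves this equality. Telescoping along the chain yields $a_{\sigma_1}|_{S(T_\tau)} = a_{\sigma_2}|_{S(T_\tau)}$, which is exactly the compatibility condition of Theorem~\ref{thm:TORIC-I}.

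The only substantive ingredient beyond the previous theorem is the connectivity of the dual graph of a complete fan, which is a classical combinatorial fact in toric geometry (following, for instance, from the link of any cone in a complete fan being a topological sphere). Since this fact is routine, there is no real obstacle; the corollary is essentially a repackaging of Theorem~\ref{thm:TORIC-I} under the simplifications brought by completeness of $\Delta$.
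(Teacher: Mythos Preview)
Your argument is correct and follows the paper's approach. The paper's own proof is even terser: it simply notes that for smooth projective $X$ every maximal cone is $n$-dimensional (so $T_\sigma = T$ and $S(T_\sigma) = S$) and then asserts the corollary as an immediate consequence of Theorem~\ref{thm:TORIC-I}, without spelling out the reduction from ``all pairs'' to ``adjacent pairs''. Your chain argument via connectivity of the dual graph of the star of $\tau$ makes this step explicit and is exactly the right justification; it is more complete than what the paper provides.
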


\section{Stanley-Reisner presentations}\label{section:SRP}
In this section, we use Theorem~\ref{thm:TORIC-I} to give a description of the 
equivariant cobordism ring of a smooth toric variety which is analogous to the
stanley-Reisner presentation of the equivariant $K$-theory ({\sl cf.} 
\cite[Theorem~6.1]{VV}) and equivariant cohomology 
({\sl cf.} \cite[Theorem~8]{BDC}). This will prove Theorem~\ref{thm:COBT**}.
We follow the notations of \cite{VV} in this description.

Let $T$ be a split torus of rank $n$ and let $M$ denote the lattice of the
one-parameter subgroups of $T$. Let $X = X(\Delta)$ be a smooth toric
variety associated to a fan $\Delta$ in $M_{\R}$. For $r \ge 1$, let 
$\Delta_r$ denote the set of $r$-dimensional cones in $\Delta$.
For $\sigma \in \Delta_{\rm max}$, let $M_{\sigma}$ denote the group of 
one-parameter subgroups of $T_{\sigma}$ so that $\widehat{T}_{\sigma} =
{M}^{\vee}_{\sigma}$ as an abelian group. For any $\rho \in \Delta_1$, let
$v_{\rho}$ denote the generator of the monoid $\rho \cap M$. 
Note that if $\{\rho_1, \cdots , \rho_s\}$ is the set of one-dimensional faces
of $\sigma \in \Delta_{\rm max}$, then the smoothness of $X$ implies that
$\{v_{\rho_1}, \cdots , v_{\rho_s}\}$ is a basis of $M_{\sigma}$. Let 
$\{{v}^{\vee}_{\rho_1}, \cdots , {v}^{\vee}_{\rho_s}\}$ denote the dual basis
of $M^{\vee}_{\sigma}$. 

We also recall ({\sl cf.} ~\eqref{eqn:Chern-map}) that for 
$\sigma \in \Delta$, there is a canonical 
embedding of abelian groups $\widehat{T}_{\sigma} \inj 
\left(\Omega^1_{T_{\sigma}}(k), F\right)$ given by 
$\chi \mapsto c^{T_{\sigma}}_1\left(L_{\chi}\right)$, if we consider the 
addition in $\Omega^1_{T_{\sigma}}(k)$ according to the formal group law of $\bL$.
In what follows, $\widehat{T}_{\sigma}$ will be
thought of as a subgroup of degree one elements in $S(T_{\sigma}) = 
\Omega^*_{T_{\sigma}}(k)$ in this sense.

For each $\rho \in \Delta_1$, we define an element $u_{\rho} = 
\left(u^{\sigma}_{\rho}\right) \in 
{\underset{\tau \in \Delta_{\rm max}}\prod} S\left(T_{\sigma}\right)$ such that
\begin{equation}\label{eqn:SRP1}
u^{\sigma}_{\rho} = \left\{ \begin{array}{ll}
{{v}^{\vee}_{\rho}} & \mbox{if \ $\rho \le \sigma$} \\
0 & \mbox{otherwise .}
\end{array}
\right.
\end{equation} 

Then $u_{\rho}$ has the property that for all $\sigma_1, \sigma_2 \in 
\Delta_{\rm max}$, the restrictions of $u^{\sigma_1}_{\rho} \in 
\widehat{T}_{\sigma_1}$ and $u^{\sigma_2}_{\rho} \in \widehat{T}_{\sigma_2}$ in 
$\widehat{T}_{\sigma_1 \cap \sigma_2}$ coincide. 
It follows from 
Theorem~\ref{thm:TORIC-I}
that each $u_{\rho}$ is an element of $\Omega^*_T(X)$.

If $S$ is a subset of $\Delta_1$ which is not contained in any maximal cone of
$\Delta$, then for any given $\sigma \in \Delta_{\rm max}$, there is one
$\rho \in S$ such that $\rho \nleq \sigma$. This implies in particular
that $u^{\sigma}_{\rho} = 0$. We conclude from this that the elements
$u_{\rho}$ satisfy the relation
\begin{equation}\label{eqn:SRP2}
{\underset{\rho \in S}\prod} u_{\rho} \ = 0 \ {\rm in} \ \Omega^*_T(X)
\end{equation}
whenever $S \subseteq \Delta_1$ is such that it is not contained in any
maximal cone of $\Delta$. We shall denote the collection of all such subsets
of $\Delta_1$ by $\Delta^0_1$.

Let $\bL[[t_{\rho}]]$ denote the graded power series ring over $\bL$ in the 
variables $\{t_{\rho}| \rho \in \Delta_1\}$ and let $I_{\Delta}$ denote the graded
ideal generated by the set of monomials 
$\{{\underset{\rho \in S}\prod} t_{\rho} | S \in  \Delta^0_1\}$.
Since $\Omega^*_T(X)$ is a subring of 
${\underset{\sigma \in \Delta_{\rm max}}\prod} S\left(T_{\sigma}\right)$
and since this latter term is a product of graded power series rings over
$\bL$, it follows from ~\eqref{eqn:SRP1} and ~\eqref{eqn:SRP2} that there
is a $\bL$-algebra homomorphism
\begin{equation}\label{eqn:SRP*}
\Psi_X : \frac{\bL[[t_{\rho}]]}{I_{\Delta}} \to \Omega^*_T(X)
\end{equation}
\[
t_{\rho} \mapsto u_{\rho}.
\]

\begin{thm}\label{thm:SRP-Main}
For a smooth toric variety $X = X(\Delta)$ associated to a fan $\Delta$ in
$M_{\R}$, the homomorphism $\Psi_X$ is an isomorphism.
\end{thm}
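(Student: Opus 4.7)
The plan is to reduce the theorem to a combinatorial computation inside $\prod_{\sigma \in \Delta_{\max}} S(T_\sigma)$ via Theorem~\ref{thm:TORIC-I}. Let $\Theta := \Phi_X \circ \Psi_X$. Its $\sigma$-component is the ring map $\Theta_\sigma : \bL[[t_\rho]]/I_\Delta \to S(T_\sigma)$ sending $t_\rho \mapsto {v}^{\vee}_{\rho}$ when $\rho \leq \sigma$ and $t_\rho \mapsto 0$ otherwise; it factors as $\bL[[t_\rho]]/I_\Delta \twoheadrightarrow \bL[[t_\rho : \rho \leq \sigma]] \xrightarrow{\sim} S(T_\sigma)$. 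Since Theorem~\ref{thm:TORIC-I} identifies $\Omega^*_T(X)$ with the subring of compatible tuples in $\prod_\sigma S(T_\sigma)$, the theorem reduces to showing that $\Theta$ is injective with image precisely these compatible tuples.

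For injectivity, I would use the standard description of a quotient by a monomial ideal: modulo $I_\Delta$, every element of $\bL[[t_\rho]]$ has a unique expansion as an $\bL$-linear combination of monomials whose support is contained in some maximal cone of $\Delta$, because any monomial with support not contained in a maximal cone is already in $I_\Delta$. Given a nonzero class $f$, pick a nonzero term $c_m m$ of this expansion with $\supp(m) \subseteq \sigma_0$ for some $\sigma_0 \in \Delta_{\max}$. Applying $\Theta_{\sigma_0}$, the term $c_m m$ contributes $c_m \prod_\rho ({v}^{\vee}_{\rho})^{a_\rho}$, a nonzero monomial in the algebraically independent generators of $S(T_{\sigma_0})$. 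Other terms either vanish (support not contained in $\sigma_0$) or yield a distinct ${v}^{\vee}_\rho$-monomial (support in $\sigma_0$ but different from $m$), so no cancellation is possible and $\Theta_{\sigma_0}(f) \neq 0$.

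For surjectivity, fix an ordering $\Delta_{\max} = \{\sigma_1, \dots, \sigma_N\}$ and iteratively build a preimage $f = f_N$ of a given compatible tuple $(a_\sigma)$. Set $f_0 = 0$, and inductively assume $\Theta_{\sigma_j}(f_i) = a_{\sigma_j}$ for $j \le i$. Put $b := a_{\sigma_{i+1}} - \Theta_{\sigma_{i+1}}(f_i) \in S(T_{\sigma_{i+1}})$. Compatibility and the inductive hypothesis force $b$ to vanish under the restriction $S(T_{\sigma_{i+1}}) \to S(T_{\sigma_{i+1} \cap \sigma_j})$ for every $j \le i$; the kernel of this restriction is the ideal of $S(T_{\sigma_{i+1}}) \cong \bL[[{v}^{\vee}_{\rho} : \rho \le \sigma_{i+1}]]$ generated by the variables $\{{v}^{\vee}_{\rho} : \rho \le \sigma_{i+1},\ \rho \not\le \sigma_j\}$. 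Hence every monomial appearing in $b$ carries, for each $j \le i$, a factor ${v}^{\vee}_{\rho_j}$ with $\rho_j \le \sigma_{i+1}$ and $\rho_j \not\le \sigma_j$. Lift $b$ to $\tilde b \in \bL[[t_\rho]]/I_\Delta$ via the section $\bL[[t_\rho : \rho \le \sigma_{i+1}]] \hookrightarrow \bL[[t_\rho]]/I_\Delta$, which is well-defined because every generator of $I_\Delta$ has a factor $t_\rho$ with $\rho \not\le \sigma_{i+1}$. Then $\Theta_{\sigma_{i+1}}(\tilde b) = b$, while for $j \le i$ each monomial in $\tilde b$ carries the factor $t_{\rho_j}$ with $\rho_j \not\le \sigma_j$ and therefore maps to $0$ under $\Theta_{\sigma_j}$. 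Setting $f_{i+1} := f_i + \tilde b$ continues the induction, and $f_N$ realizes $(a_\sigma)$.

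The chief technical obstacle is the combinatorial step identifying the kernel of each restriction map as an ideal generated by distinguished variables, and concluding that an element lying in all of these kernels has every monomial divisible by a variable from each forbidden set. This is the familiar statement that an intersection of ideals generated by subsets of the free generators of a power series ring is itself generated by products picked one-from-each; it is immediate on the level of monomials, but must be recorded with care in the graded power series setting, where a monomial computation extends to all classes by $(t_\rho)$-adic completeness of $\bL[[t_\rho]]/I_\Delta$. Once this is in place, the lifting step and the inductive surjectivity argument follow automatically.
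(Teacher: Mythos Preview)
Your argument is correct, but it follows a genuinely different route from the paper's. The paper proves the theorem by induction on $|\Delta|$: it chooses a maximal cone $\sigma$, removes the closed orbit $O_\sigma$ to obtain the smaller toric variety $X' = X(\Delta \setminus \{\sigma\})$, and compares the localization exact sequence $0 \to \Omega^*_T(O_\sigma) \xrightarrow{(i_\sigma)_*} \Omega^*_T(X) \xrightarrow{j_\sigma^*} \Omega^*_T(X') \to 0$ with the parallel short exact sequence on the Stanley--Reisner side (multiplication by $x_\sigma = \prod_{\rho \le \sigma} t_\rho$, then quotient). The key geometric input is the self-intersection formula, identifying $i_\sigma^* \circ (i_\sigma)_*$ with multiplication by $c^T_s(N_{O_\sigma/X}) = y_\sigma$, together with cohomological rigidity to get exactness on the left; the induction hypothesis then finishes the diagram chase.

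By contrast, you work entirely inside $\prod_{\sigma \in \Delta_{\max}} S(T_\sigma)$ via Theorem~\ref{thm:TORIC-I}, reducing the problem to combinatorics of monomial ideals: injectivity by a monomial-support argument in the Stanley--Reisner ring, surjectivity by an iterative correction over the maximal cones using that the restriction kernels $\ker\bigl(S(T_{\sigma}) \to S(T_{\sigma \cap \sigma'})\bigr)$ are generated by subsets of the coordinate variables. This is more elementary in that it bypasses push-forwards, normal-bundle Chern classes, and the localization sequence altogether; the trade-off is that the geometric meaning of $\Psi_X(t_\rho)$ as the class of the divisor $V_\rho$ stays hidden. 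Both proofs rest on Theorem~\ref{thm:TORIC-I}; the paper's induction meshes more tightly with the rigidity and specialization machinery developed earlier, while your argument is a cleaner self-contained deduction once that theorem is available. One small point worth making explicit in your write-up: in the surjectivity step you use that $\Theta(f_i)$ is itself a compatible tuple, which holds because $\Theta = \Phi_X \circ \Psi_X$ lands in the image of $\Phi_X$.
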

\begin{proof}
We prove by the induction on the number of cones in $\Delta$.
Suppose $\Delta = \{\sigma\}$ is a singleton set. In that case, $\sigma$
is the only maximal cone and we have seen in the proof of
Theorem~\ref{thm:decomposition*} that $X = U_{\sigma}$ is a $T$-equivariant
vector bundle over $O_{\sigma}$ such that the inclusion 
$O_{\sigma} \stackrel{i_{\sigma}}{\inj} X$ is the zero-section embedding. 
Hence, there is an isomorphism $\Omega^*_T(X) \stackrel{\cong}
{\underset{i^*_{\sigma}}\to}
\Omega^*_T(O_{\sigma}) \cong S(T_{\sigma}) = \bL[[t_1, \cdots , t_s]]$,
where $s$ is the dimension of $\sigma$. It is also clear in this case that
the ideal $I_{\Delta}$ in ~\eqref{eqn:SRP*} is zero. Hence, we have isomorphism
\[
\bL[[t_1, \cdots , t_s]] \stackrel{\cong}{\underset{\Psi_X}\to} \Omega^*_T(X)
\stackrel{\cong}{\underset{\Phi_X}\to} S(T_{\sigma}).
\]

We consider now the general case. We assume that $|\Delta| \ge 2$ and choose
a maximal cone $\sigma$ of dimension $s \ge 1$ in $\Delta$. Let 
$X' = X'(\Delta')$ 
be the toric variety associated to the fan $\Delta' = \Delta \setminus 
\{\sigma\}$.
Note that $O_{\sigma}$ is a closed $T$-orbit in $X$ and $X'$ is the complement
of $O_{\sigma}$ in $X$.  Let $U_{\sigma} \subset X$ be the principal open set
associated to the fan consisting of all faces of $\sigma$ and let
$U'$ be the complement of $O_{\sigma}$ in $U_{\sigma}$. Then $U'$ is nothing but
the complement of the zero-section in the $T$-equivariant vector bundle
$U_{\sigma} \to O_{\sigma}$. Let $i_{\sigma} : O_{\sigma} \inj X$ and
$j_{\sigma} : X' \inj X$ denote the closed and open embeddings respectively. 
Let $S_{\sigma} = \{\rho_1, \cdots ,\rho_s\}$ be the set of
one-dimensional faces of $\sigma$ and set 
\[
x_{\sigma} = \ \stackrel{s}{\underset{j =1}\prod}
t_{\rho_j} \in \frac{\bL[[t_{\rho}]]}{I_{\Delta}} \ \ {\rm and} \ \ 
y_{\sigma} =  \ \stackrel{s}{\underset{j =1}\prod} u_{\rho_j} \in \Omega^*_T(X).
\]
Since $N_{{O_{\sigma}}/{X}} = N_{{O_{\sigma}}/{U_{\sigma}}}$ and since the latter is
of the form $\stackrel{s}{\underset{j = 1}\oplus} L_{\chi_j}$, where 
$\{\chi_1, \cdots , \chi_s\}$ is a basis of $\widehat{T}_{\sigma}$, it follows
from the proof of Lemma~\ref{lem:rigiditysuff} and the definition of the
elements $u_{\rho}$ that 
\begin{equation}\label{eqn:SRPM1}
c^T_s\left(N_{{O_{\sigma}}/{X}}\right) = y_{\sigma} \in \Omega^*_T(X).
\end{equation}
We consider the diagram
\begin{equation}\label{eqn:SRPM2}
\xymatrix{
\bL[[t_{\rho_1}, \cdots , t_{\rho_s}]] \ar[r]^{\ \ \ \cong} \ar[d]_{x_{\sigma}} &
\Omega^*_T(O_{\sigma}) \ar[d]^{{i_{\sigma}}_*} \ar[r]^{\cong} &
S(T_{\sigma}) \ar[d]^{y_{\sigma}} \\
{\frac{\bL[[t_{\rho}]]}{I_{\Delta}}} \ar[r]^{\Psi_X} &
\Omega^*_T(X) \ar[r]^<<<<{\Phi_X} & 
{{\underset{\tau \in \Delta_{\rm max}}\prod} S(T_{\tau})},}
\end{equation}
where the horizontal maps on the top are the obvious isomorphisms
taking $t_{\rho_j}$ to $u_{\rho_j}$. The left and the right vertical maps
are the multiplication by the indicated elements in the target rings.
We claim that all the vertical arrows are injective and the left square in 
this diagram commutes. 

To prove the claim, notice that the composite outer square clearly commutes
by the definition of $x_{\sigma}$ and $y_{\sigma}$ and the map $\Psi_X$.
Since $\Phi_X$ is injective by Theorem~\ref{thm:TORIC-I}, we only need to 
show that the right square commutes and the right vertical arrow is injective
to prove the claim.

We first observe that the right vertical arrow is the multiplication by
$y_{\sigma}$ on the factor $S(T_{\sigma})$ and is zero on the other factors
of ${{\underset{\tau \in \Delta_{\rm max}}\prod} S(T_{\tau})}$. Thus the
required injectivity is equivalent to showing that the multiplication by
$y_{\sigma}$ is injective in $S(T_{\sigma})$. But this is obvious since
$S(T_{\sigma}) \cong \bL[[t_{\rho_1}, \cdots , t_{\rho_s}]]$ is an integral domain
and $y_{\sigma}$ is clearly non-zero ({\sl cf.} \cite[Lemma~5.3]{Krishna3}). 

To show the commutativity of the right square, we observe from the
proof of Theorem~\ref{thm:TORIC-I} that $\Phi_X$ is simply the product of
the pull-back maps $i^*_{\tau} : \Omega^*_T(X) \to \Omega^*_T(O_{\tau})$,
$\tau \in \Delta_{\rm max}$.
Hence the composite $\Phi_X \circ {i_{\sigma}}_*$ is $i^*_{\sigma} \circ
{i_{\sigma}}_*$ on the factor $S(T_{\sigma})$ and zero on the other factors of
${{\underset{\tau \in \Delta_{\rm max}}\prod} S(T_{\tau})}$.
Since we have just
seen that the composite $S(T_{\sigma}) \xrightarrow{y_{\sigma}}
{{\underset{\tau \in \Delta_{\rm max}}\prod} S(T_{\tau})}$ is of similar type,
we are reduced to showing that the triangle
\[
\xymatrix@C.9pc{
\Omega^*_T(O_{\sigma}) \ar[dr]^{y_{\sigma}} \ar[d]_{{i_{\sigma}}_*} & \\
\Omega^*_T(X) \ar[r]_{i^*_{\sigma}} & \Omega^*_T(O_{\sigma})}
\]
commutes. But this follows immediately from Proposition~\ref{prop:SIF}
and ~\eqref{eqn:SRPM1}.

To complete the proof of the theorem, we now consider the
diagram
\begin{equation}\label{eqn:SRPM3}
\xymatrix{
0 \ar[r] & {\bL[[t_{\rho_1}, \cdots , t_{\rho_s}]]} \ar[r]^{\ \ \ \ \ x_{\sigma}} 
\ar[d]_{\cong} & {\frac{\bL[[t_{\rho}]]}{I_{\Delta}}} \ar[d]^{\Psi_X}
\ar[r]^{{\ov{j}}_{\sigma}^*} & {\frac{\bL[[t_{\rho}]]}{(I_{\Delta}, x_{\sigma})}} 
\ar[d]^{{\ov{\Psi}}_{X'}} \ar[r] & 0 \\
0 \ar[r] & {\Omega^*_T(O_{\sigma})} \ar[r]_{{i_{\sigma}}_*} &
\Omega^*_T(X) \ar[r]_{j^*_{\sigma}} & \Omega^*_T(X') \ar[r] & 0,}
\end{equation}
where ${{\ov{j}}_{\sigma}^*}$ is the natural quotient map by the ideal
$(x_{\sigma})$ in $\frac{\bL[[t_{\rho}]]}{I_{\Delta}}$. Note that the image of the
first map in the top row is the ideal $(x_{\sigma})$ because the product
of $x_{\sigma}$ with any $t_{\rho}, \rho \notin \{\rho_1, \cdots , \rho_s\}$ 
is zero. 
The left square in this diagram commutes and the first maps in both the
rows are injective by the above claim. The bottom row is exact by 
Theorem~\ref{thm:Basic}. Since $\sigma$ is not a cone of $\Delta'$,
the element $x_{\sigma}$ is zero in 
$\frac{\bL[[t_{\rho}, \ \rho \in {\Delta'}_1]]}{I_{\Delta'}}$ and hence
the map $j^*_{\sigma} \circ \Psi_X$ has a factorization:
\[
\frac{\bL[[t_{\rho}]]}{I_{\Delta}} \surj 
{\frac{\bL[[t_{\rho}]]}{(I_{\Delta}, x_{\sigma})}} 
\to \frac{\bL[[t_{\rho}, \ \rho \in {\Delta'}_1]]}{I_{\Delta'}}
\xrightarrow{\Psi_{X'}} \Omega^*_T(X'),
\]
where the middle arrow is the natural map of the Stanley-Reisner power series
rings induced by the inclusion of the fans $\Delta' \subset \Delta$. 
Letting ${{\ov{\Psi}}_{X'}}$ denote the composite 
${\frac{\bL[[t_{\rho}]]}{(I_{\Delta}, x_{\sigma})}} 
\to \frac{\bL[[t_{\rho}, \ \rho \in {\Delta'}_1]]}{I_{\Delta'}}
\xrightarrow{\Psi_{X'}} \Omega^*_T(X')$, we see that  
the right square in the above diagram also commutes.

If all the cones of $\Delta$ are at most one-dimensional, then 
$x_{\sigma} = t_{\rho}$, where $\rho = \sigma$ and it is
obvious that ${\frac{\bL[[t_{\rho}]]}{(I_{\Delta}, x_{\sigma})}}$ is the
Stanley-Reisner ring associated to the fan $\Delta'$.
If $\Delta$ has a cone of dimension at least two, we can assume that $\sigma$
is of dimension at least two. In that case, we have ${\Delta'}_1 =
\Delta_1$ and the natural inclusion $\Delta^0_1 \subseteq {\Delta'}^0_1$ gives
the equality  ${\Delta'}^0_1 = {\Delta}^0_1 \ \coprod \ \{S_{\sigma}\}$. 
In particular, we have 
\[
{\frac{\bL[[t_{\rho}, \ \rho \in \Delta_1]]}{(I_{\Delta}, x_{\sigma})}} 
\xrightarrow{\cong}
\frac{\bL[[t_{\rho}, \ \rho \in {\Delta'}_1]]}{I_{\Delta'}}.
\]
On the other hand, $\Delta'$ is a fan with smaller number of cones than in
$\Delta$ and $X' = X'(\Delta')$. Hence the map 
$\frac{\bL[[t_{\rho}, \ \rho \in {\Delta'}_1]]}{I_{\Delta'}} 
\xrightarrow{\Psi_{X'}} \Omega^*_T(X')$ is an isomorphism by induction. We 
conclude that the map ${\ov{\Psi}}_{X'}$ in the diagram~\eqref{eqn:SRPM3}
is an isomorphism.
A diagram chase shows that $\Psi_X$ is also an isomorphism.
This completes the proof of the theorem.
\end{proof}

\section{Cobordism ring of toric varieties}\label{section:CRTV}
Let $T$ be a split torus of rank $n$ with the group of one-parameter
subgroups $M$.
We now describe the ordinary cobordism ring of a smooth toric variety
using Theorem~\ref{thm:SRP-Main} and the following result which explicitly
describes the ordinary cobordism ring as a quotient of the equivariant
cobordism ring. Recall from Section~\ref{section:AC} that for any
linear algebraic group $G$ acting on a scheme $X$, there is a natural
forgetful map $r^G_X: \Omega^*_G(X) \to \Omega^*(X)$ which is a ring
homomorphism if $X$ is smooth.

\begin{thm}\label{thm:FF*}$(${\sl cf.} \cite[Theorem~3.4]{Krishna3}$)$
Let $T$ be a split torus acting on a $k$-variety $X$. Then the forgetful
map $r^T_X$ induces an isomorphism
\[
\Omega^T_*(X) \otimes_{S} \bL \xrightarrow{\cong} \Omega_*(X).
\]
If $X$ is smooth, this is an $\bL$-algebra isomorphism.
\end{thm}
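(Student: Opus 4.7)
The plan is to verify that the forgetful map $r^T_X : \Omega^T_*(X) \to \Omega_*(X)$ vanishes on the ideal $(t_1,\ldots,t_n)\Omega^T_*(X)$, and then identify the induced map on the quotient as an isomorphism via reduction to the case of a single $T$-orbit. For well-definedness, note that $S = \bL[[t_1,\ldots,t_n]]$ with $t_i = c^T_1(L_{\chi_i})$; the forgetful map sends $L_{\chi_i}$ to the trivial line bundle on $\Spec(k)$, so $r^T_X(t_i) = 0$. Since $r^T_X$ is $S$-linear, it factors through the quotient, giving a well-defined map $\bar r^T_X : \Omega^T_*(X) \otimes_S \bL \to \Omega_*(X)$.

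\textbf{Reduction to the orbit case.} I would proceed by Noetherian induction on $X$ using its $T$-orbit stratification. Let $U \subseteq X$ be a $T$-stable open subset consisting of orbits of a single type, with $Z$ its closed complement of smaller dimension. The localization sequence (Theorem~\ref{thm:Basic}(ii)) together with the right-exactness of $(-) \otimes_S \bL$ yields the commutative diagram
\begin{equation*}
\xymatrix@C.6pc{
\Omega^T_*(Z) \otimes_S \bL \ar[r] \ar[d]_{\bar r^T_Z} & \Omega^T_*(X) \otimes_S \bL \ar[r] \ar[d]_{\bar r^T_X} & \Omega^T_*(U) \otimes_S \bL \ar[r] \ar[d]_{\bar r^T_U} & 0 \\
\Omega_*(Z) \ar[r] & \Omega_*(X) \ar[r] & \Omega_*(U) \ar[r] & 0.
}
\end{equation*}
The left vertical arrow is an isomorphism by the inductive hypothesis. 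For $U$ a disjoint union of copies of $T/H$ for a fixed closed subgroup $H \subseteq T$, Morita (Proposition~\ref{prop:Morita}) identifies $\Omega^T_*(T/H)$ with $\Omega^H_*(k) = S(H)$. Picking bases for $\widehat{T}$ and $\widehat{H}$ compatibly, the restriction $S \to S(H)$ is surjective with kernel generated by the characters that are trivial on $H$, so $S(H) \otimes_S \bL \cong \bL$. On the other hand, repeated application of homotopy invariance and the localization for $\G_m \subset \A^1$ identifies $\Omega_*(T/H)$ with $\bL$, and tracing through the Morita construction shows $\bar r^T_U$ is the identity under these identifications. Hence $\bar r^T_U$ is an isomorphism.

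\textbf{Main obstacle.} The delicate step is the diagram chase: since $\bL = S/(t_1,\ldots,t_n)$ is not flat over $S$, tensoring destroys left-exactness, so a naive five-lemma on the above diagram gives only surjectivity of $\bar r^T_X$. Controlling $\ker(\Omega^T_*(Z) \otimes_S \bL \to \Omega^T_*(X) \otimes_S \bL)$ amounts to understanding the connecting $\Tor^S_1(\Omega^T_*(U), \bL)$. The natural remedy is to refine the stratification so that the pair $(Z, X)$ satisfies the cohomological rigidity condition of Section~\ref{section:PRELIMS} at each stage; Proposition~\ref{prop:split} then upgrades the top localization sequence to a split short exact sequence before tensoring, and the splitting is preserved by $\otimes_S \bL$. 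This allows the five-lemma to close the induction, completing the proof.
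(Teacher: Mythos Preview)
The paper does not prove this theorem; it is quoted verbatim from \cite[Theorem~3.4]{Krishna3}. So there is no in-paper argument to compare against, and you are attempting to supply a proof the paper deliberately outsources.

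Your outline has the right shape (localization plus induction on a $T$-invariant stratification), and you correctly isolate the genuine difficulty: $\bL$ is not flat over $S$, so the tensored localization sequence loses left-exactness and the five-lemma only yields surjectivity of $\bar r^T_X$. The problem is with your proposed remedy. You suggest ``refining the stratification so that the pair $(Z,X)$ satisfies cohomological rigidity'' and then invoking Proposition~\ref{prop:split} to split the sequence before tensoring. But Proposition~\ref{prop:split} is stated for closed embeddings in $\sV^S_G$, i.e.\ for \emph{smooth} ambient schemes, whereas Theorem~\ref{thm:FF*} is asserted for an arbitrary $k$-variety $X$. So the tool is simply unavailable in the singular case, and your induction cannot start.

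Even restricting to smooth $X$, the rigidity hypothesis is that $c^T_d(N_{Z/X})$ be a non-zero-divisor in $\Omega^*_T(Z)$. In this paper that is verified for toric varieties (Corollary~\ref{cor:rigidstrata}) only via the explicit orbit description and Lemma~\ref{lem:rigiditysuff}, which needs each stratum to be of the form $(G/T')\times\P^n$ with the normal bundle having no trivial $T'$-weight. For a general smooth $T$-variety the strata $X_s$ of Proposition~\ref{prop:strata} are smooth, but there is no reason for the top Chern class of $N_s$ to be a non-zero-divisor, and you give no argument for it. The phrase ``refine the stratification so that rigidity holds'' hides exactly the missing content. A secondary point: your orbit computation treats $H\subseteq T$ as if it were a subtorus, but closed subgroups of $T$ can have nontrivial finite component group, and the identification $S(H)\otimes_S\bL\cong\bL$ then needs more care. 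In short, the obstacle you name is real, but the fix you propose does not close it.
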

Let $X = X(\Delta)$ be a smooth toric variety associated to a fan $\Delta$
in $M_{\R}$. Let $<,> : M \times M^{\vee} \to \Z$ denote the natural pairing.
Recall that for an one-dimensional cone $\rho \in \Delta$, the symbol
$v_{\rho}$ denotes the generator of the monoid $M \cap \rho$. 
It is well known that the associated orbit closure $V_{\rho} = \ov{O_{\rho}}$ is 
also a smooth toric variety which is a $T$-equivariant Weil divisor on $X$. 
In particular, $[V_{\rho} \to X]$
is a $T$-equivariant cobordism cycle on $X$. Let $[V_{\rho}] \in \Omega^1_T(X)$
denote its fundamental class ({\sl cf.} Subsection~\ref{subsection:FundC}).
We denote the cobordism cycle $[V_{\rho} \to X] \in \Omega^*(X)$ also by
$[V_{\rho}]$.

Let $\bL[t_{\rho}] = \bL[t_{\rho}, \rho \in \Delta_1]$ be the graded polynomial 
ring over $\bL$ with each $t_{\rho}$ homogeneous of degree one. Let
$\ov{I}_{\Delta}$ denote the graded ideal in $\bL[t_{\rho}]$ generated by the set 
of monomials $\{{\underset{\rho \in S}\prod} t_{\rho} | S \in  \Delta^0_1\} 
\bigcup \{t^{n+1}_{\rho} | \rho \in \Delta_1\}$. It is clear that the sum
${\underset{\rho \in \Delta_1}\sum} [<\chi, v_{\rho}>]_F \ t_{\rho}$
({\sl cf.} Subsection~\ref{subsection:FGL*})
is a well-defined homogeneous element of degree one in the graded ring
${\bL[t_{\rho}]}/{\ov{I}_{\Delta}}$ for every $\chi \in M^{\vee}$.

\begin{thm}\label{thm:COBT}
Let $X = X(\Delta)$ be a smooth toric variety associated to a fan $\Delta$
in $M_{\R}$. Then the assignment $t_{\rho} \mapsto [V_{\rho}]$ defines an
$\bL$-algebra isomorphism
\begin{equation}\label{eqn:COBT1}
\ov{\Psi}_X: \frac{\bL[t_{\rho}]}
{\left(\ov{I}_{\Delta}, \underset{\rho \in \Delta_1}\sum
[<\chi, v_{\rho}>]_F \ t_{\rho}\right)} \to \Omega^*(X),
\end{equation}
where $\chi$ runs over ${M}^{\vee}$.
\end{thm}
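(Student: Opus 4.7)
The plan is to derive Theorem~\ref{thm:COBT} from Theorem~\ref{thm:SRP-Main} together with Theorem~\ref{thm:FF*}. Theorem~\ref{thm:FF*} identifies $\Omega^*(X)$ with $\Omega^*_T(X) \otimes_S \bL$ as $\bL$-algebras, where $S \to \bL$ is the augmentation sending every $c^T_1(L_\chi)$ to zero, and Theorem~\ref{thm:SRP-Main} describes the left factor as $\bL[[t_\rho]]/I_\Delta$ as an $S$-algebra. The proof therefore amounts to (a) making the $S$-algebra structure on $\bL[[t_\rho]]/I_\Delta$ explicit, and (b) recasting the resulting tensor quotient in polynomial form.

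For (a), I would use the standard toric identity: the character $\chi$ defines a $T$-weight rational function on $X$ whose principal divisor equals $\sum_\rho <\chi,v_\rho> V_\rho$, giving an isomorphism $L_\chi|_X \cong \mathcal{O}_X\bigl(\sum_\rho <\chi,v_\rho> V_\rho\bigr)$ in $\Pic^T(X)$. Applying the formal group law of Subsection~\ref{subsection:FGL*} to this additive relation translates it into
\[
c^T_1(L_\chi|_X) \;=\; \sum_{\rho \in \Delta_1} [<\chi,v_\rho>]_F \cdot [V_\rho] \;\in\; \Omega^1_T(X),
\]
which under $\Psi_X^{-1}$ becomes $\sum_\rho [<\chi,v_\rho>]_F \, t_\rho$. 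Fixing a basis $\chi_1,\dots,\chi_n$ of $M^\vee$ and using the identification $S \cong \bL[[c^T_1(L_{\chi_1}),\dots,c^T_1(L_{\chi_n})]]$ from~\eqref{eqn:CBT*}, the tensor product becomes
\[
\Omega^*(X) \;\cong\; \frac{\bL[[t_\rho]]}{\bigl(I_\Delta,\ \sum_\rho [<\chi,v_\rho>]_F \, t_\rho : \chi \in M^\vee\bigr)}.
\]
The ideal generated by relations for all $\chi$ agrees with the one generated by the basis relations, thanks to $[m+m']_F t = [m]_F t +_F [m']_F t$; both formulations yield the same quotient.

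The final step is to replace the graded power series ring with the graded polynomial ring $\bL[t_\rho]$. Since $X$ is smooth of dimension $n$, we have $\Omega^i(X) = 0$ for $i > n$, so $[V_\rho]^{n+1} = 0$ in $\Omega^*(X)$; equivalently, $t_\rho^{n+1}$ already belongs to the ideal above, and the quotient is unaffected by also imposing $t_\rho^{n+1} = 0$ for each $\rho$. Once these extra relations are present, every monomial in the $t_\rho$'s has bounded exponents, each graded piece contains only finitely many monomial classes, and the graded power series and graded polynomial rings coincide modulo $\ov{I}_\Delta$. This delivers the polynomial presentation of the theorem, under which $t_\rho$ corresponds to $u_\rho \in \Omega^*_T(X)$ and, in turn, to $[V_\rho] \in \Omega^*(X)$ under the forgetful map. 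The principal technical obstacle is step (a): one must carefully establish the equivariant line-bundle identity $L_\chi|_X = \mathcal{O}_X(\mathrm{div}(\chi))$ in $\Pic^T(X)$ with the correct signs and equivariant structure, and verify that the class $u_\rho$ produced by Theorem~\ref{thm:SRP-Main} is indeed the equivariant fundamental class of the divisor $V_\rho$, so that the translation of $c^T_1(L_\chi|_X)$ in terms of the $t_\rho$'s is exactly the stated formal-group-law sum.
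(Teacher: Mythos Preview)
Your proposal is correct and follows essentially the same route as the paper: combine Theorem~\ref{thm:SRP-Main} with Theorem~\ref{thm:FF*} to obtain the power-series presentation, then pass to polynomials using $\Omega^{>n}(X)=0$. The only noteworthy difference is in step~(a): you invoke the equivariant line-bundle identity $L_\chi|_X \cong \sO_X\bigl(\sum_\rho \langle\chi,v_\rho\rangle V_\rho\bigr)$ and the formal group law, whereas the paper sidesteps this by computing $c^T_1(L_\chi)$ directly componentwise under the embedding $\Phi_X:\Omega^*_T(X)\hookrightarrow\prod_{\sigma\in\Delta_{\max}}S(T_\sigma)$, writing $\chi=\sum_\rho\langle\chi,v_\rho\rangle u_\rho$ in the sublattice $V_\Delta\subseteq\prod_\sigma\widehat{T}_\sigma$ and then reading off the Chern class; this avoids the equivariant Picard-group verification you flag as the main technical obstacle.
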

\begin{proof}
We first show that there is an $\bL$-algebra isomorphism
\begin{equation}\label{eqn:COBT1*}
\wt{\Psi}_X: \frac{\bL[[t_{\rho}]]}
{\left(I_{\Delta}, \underset{\rho \in \Delta_1}\sum
[<\chi, v_{\rho}>]_F \ t_{\rho}\right)} \to \Omega^*(X),
\end{equation}
where $\chi$ runs over ${M}^{\vee}$.
Since the orbit closure $V_{\rho}$ associated to an one-dimensional cone
$\rho$ is the union of the orbits $O_{\sigma}$ such that $\rho \le \sigma$,
it is clear from our definition of $u_{\rho}$ in ~\eqref{eqn:SRP1} that it is 
precisely the class of the cycle $V_{\rho}$ in 
${{\underset{\tau \in \Delta_{\rm max}}\prod} S(T_{\tau})}$ and hence in
$\Omega^*_T(X)$. Thus, the isomorphism in ~\eqref{eqn:COBT1*} follows 
immediately from Theorems~\ref{thm:SRP-Main} and ~\ref{thm:FF*}, once we know 
that the class of $c^T_1(L_{\chi})$ in 
${{\underset{\tau \in \Delta_{\rm max}}\prod} S(T_{\tau})}$ is
$\underset{\rho \in \Delta_1}\sum [<\chi, v_{\rho}>]_F \ u_{\rho}$
for a character $\chi$ of $T$.

Let $V_{\Delta} \subseteq {\underset{\sigma \in \Delta_{\rm max}}\prod}
\widehat{T}_{\sigma}$ be the subgroup consisting of elements $(x^{\sigma})$
with the property that for all $\sigma_1, \sigma_2 \in \Delta_{\rm max}$, the
restrictions of $x^{\sigma_1} \in \widehat{T}_{\sigma_1}$ and 
$x^{\sigma_2} \in \widehat{T}_{\sigma_2}$ in $\widehat{T}_{\sigma_1 \cap \sigma_2}$
coincide. It is easy to check that the elements $u_{\rho}$ form a basis of
$V_{\Delta}$ ({\sl cf.} \cite[Proposition~6.3]{VV}). Here, $\widehat{T}_{\sigma}$ 
is thought of as a subgroup of degree one elements in $S(T_{\sigma})$ as
in Section~\ref{section:SRP}. As an element of 
${\underset{\sigma \in \Delta_{\rm max}}\prod} \widehat{T}_{\sigma}$,
we have 
\[
\begin{array}{lll}
\chi & = &
{\underset{\sigma \in \Delta_{\rm max}}\sum}
{\underset{\rho \in \Delta_1}{\underset{\rho \le \sigma}\sum}}
<\chi, v_{\rho}> v^{\vee}_{\rho} \\
& = & 
{\underset{\rho \in \Delta_1}\sum}{\underset{\sigma \in \Delta_{\rm max}}
{\underset{\rho \le \sigma}\sum}} <\chi, v_{\rho}> v^{\vee}_{\rho} \\
&  = & 
{\underset{\rho \in \Delta_1}\sum} <\chi, v_{\rho}> u_{\rho}.
\end{array}
\]
In particular, we get 
$c^T_1(L_{\chi}) = \underset{\rho \in \Delta_1}\sum [<\chi, v_{\rho}>]_F \ u_{\rho}$.
This proves the isomorphism of ~\eqref{eqn:COBT1*}. 

To complete the proof of the theorem, we only have to show that the
inclusion $\bL[t_{\rho}] \subset \bL[[t_{\rho}]]$ descends to an isomorphism
of graded rings 
\begin{equation}\label{eqn:COBT2}
\frac{\bL[t_{\rho}]}{\left(\ov{I}_{\Delta}, \underset{\rho \in \Delta_1}\sum
[<\chi, v_{\rho}>]_F \ t_{\rho}\right)} \xrightarrow{\cong}
\frac{\bL[[t_{\rho}]]}
{\left(I_{\Delta}, \underset{\rho \in \Delta_1}\sum
[<\chi, v_{\rho}>]_F \ t_{\rho}\right)}.
\end{equation}

Let $A$ and $B$ denote the rings on the left and the right hand sides of
~\eqref{eqn:COBT2} respectively.
We first observe that $\wt{\Psi}_X(t_{\rho}) = [V_{\rho}]$, which is a homogeneous
element of degree one in $\Omega^*(X)$. Since $\Omega^{>n}(X) = 0$, we
conclude from ~\eqref{eqn:COBT1*} that $t^{n+1}_{\rho} = 0$ in $B$
for each $\rho \in \Delta_1$. 
Since ${\bL[t_{\rho}]}/{(t^{n+1}_{\rho})} \xrightarrow{\cong}
{\bL[[t_{\rho}]]}/{(t^{n+1}_{\rho})}$, the isomorphism of ~\eqref{eqn:COBT2} is now
immediate.  
\end{proof}

\begin{exms}\label{exm:PAT}
We now illustrate Theorem~\ref{thm:COBT} by using it to
verify the known formulae for cobordism rings of some standard toric 
varieties. If $X = X(\Delta)$, where $\Delta$ consists of all the faces 
of a single cone $\sigma$ in $M_{\R}$, then the smoothness of $X$ implies that
the set of primitive vectors $\{v_1, \cdots , v_s\}$ corresponding to the
one-dimensional faces $\{\rho_1, \cdots , \rho_s\}$ of $\sigma$ can be
extended to a basis $\{v_1, \cdots , v_s, v_{s+1}, \cdots , v_n\}$ of $M$.
Setting $\chi_i = v^{\vee}_i$, we get $I_{\Delta} = 0 = c^T_1(L_{\chi_i})$
for $s+1 \le i \le n$ and $c^T_1(L_{\chi_i}) = t_{\rho_i}$ for $1 \le i \le s$.
In particular, we obtain
\[
\Omega^*(X) \cong \frac{\bL[t_{\rho_1}, \cdots , t_{\rho_s}]}{\left(I_{\Delta}, 
\underset{\rho \in \Delta_1}\sum [<\chi_i, v_{\rho}>]_F \ t_{\rho}\right)}
= \frac{\bL[t_{\rho_1}, \cdots , t_{\rho_s}]}{\left(t_{\rho_1}, \cdots ,
t_{\rho_s}\right)} = \bL.
\]

Next we consider the case of $\P^n = X(\Delta)$. We take
$M = \Z v_1 \oplus \cdots \oplus \Z v_n$ and let 
$\{\rho_1, \cdots , \rho_{n+1}\}$ be the set of one-dimensional cones
in $\Delta$, where $\rho_i$ is the edge along the vector $v_{i}$ for
$1 \le i \le n$ and $\rho_{n+1}$ is the edge along the primitive vector 
$v_{n+1} = -(v_1 + \cdots + v_n)$. Set $\chi_i = v^{\vee}_i$ for $1 \le i \le n$.
It is then easy to see that if $c^T_i$ denotes the element
$\underset{\rho \in \Delta_1}\sum [<\chi_i, v_{\rho}>]_F \ t_{\rho}$ in
$\bL[[t_{\rho_1}, \cdots , t_{\rho_{n+1}}]]$, then
$c^T_i = t_{\rho_i} -_F t_{\rho_{n+1}}$ for $1 \le i \le n$.
In particular, we get
\[
c^T_i = t_{\rho_i} - t_{\rho_{n+1}} +  
t_{\rho_i} t_{\rho_{n+1}}f\left(t_{\rho_i}, t_{\rho_{n+1}}\right)
= t_{\rho_i} - t_{\rho_{n+1}}\left(1 + t_{\rho_i}
g\left(t_{\rho_i}, t_{\rho_{n+1}}\right)\right)
= t_{\rho_i} - u_i t_{\rho_{n+1}},
\]
where $u_i$ is an invertible homogeneous element of degree zero
in $\bL[[t_{\rho_1}, \cdots , t_{\rho_{n+1}}]]$.
Since $I_{\Delta} = \left(\stackrel{n+1}{\underset{i = 1}\prod} t_{\rho_i}\right)$,
we get
\[
\begin{array}{lll}
\Omega^*(\P^n) & \cong & 
\frac{\bL[[t_{\rho_1}, \cdots , t_{\rho_{n+1}}]]}
{\left(I_{\Delta}, \stackrel{n+1}{\underset{j =1}\sum}
[<\chi_i, v_{\rho_j}>]_F \ t_{\rho_j}\right)} \\
& \cong & 
\frac{\bL[[t_{\rho_1}, \cdots , t_{\rho_{n+1}}]]}
{\left(\stackrel{n+1}{\underset{i = 1}\prod} t_{\rho_i},
t_{\rho_1} - u_1 t_{\rho_{n+1}}, \cdots ,
t_{\rho_n} - u_n t_{\rho_{n+1}}\right)} \\
& \cong & 
\frac{\bL[[t_{\rho_{n+1}}]]}{\left(t^{n+1}_{\rho_{n+1}}\right)} \\
& \cong & \frac{\bL[t_{\rho_{n+1}}]}
{\left(t^{n+1}_{\rho_{n+1}}\right)}.
\end{array}
\]
\end{exms}

\noindent\emph{Acknowledgments.}
This paper grew out of some discussions the authors had during their
stay at the Institut Fourier, Universit\'e de 
Grenoble in July, 2010. The authors take this opportunity to thank Michel 
Brion for invitation and financial support during the visit.

\end{document}